\documentclass[11pt,reqno]{amsart} 
\usepackage[utf8]{inputenc}
\usepackage{enumitem,amsmath, amsthm, amssymb, amsfonts, mathtools,fullpage} %maths stuff

\usepackage{graphicx, tikz} %graphics stuff
\usepackage{hyperref} %to do hyperlinks
\usepackage{multirow} %this helps us do tables
\usepackage{epigraph}
\usepackage{blindtext}
\usepackage{mathtools}
\usepackage{dirtytalk}
\usepackage{fullpage}
\usepackage{subfig}
\usepackage{color}
\usepackage{tikz}
\usepackage{adjustbox}
\usepackage[normalem]{ulem}
\usepackage{tikz}
\usepackage{graphicx}
\usepackage{enumitem}
\usepackage{float}
\usetikzlibrary{patterns}
\theoremstyle{definition}

\newtheorem{theorem}{Theorem}[section]
\newtheorem{lemma}{Lemma}[section]

\newtheorem{corollary}{Corollary}[section]

\newtheorem{definition}{Definition}[section]

\newtheorem{example}{Example}[section]
\newtheorem{remark}{Remark}[section]
\newtheorem{proposition}{Proposition}[section]

\newcommand{\PF}{\text{PF}}
\newcommand{\MVP}{\text{MVP}}
\newcommand{\Out}{\mathcal{O}}
\newcommand{\M}{\mathcal{M}}

\newcommand{\OPFn}{\mathcal{O}_{\text{PF}_{n}}}

\newcommand{\OMVPn}{\mathcal{O}_{\text{MVP}_{n}}}
\newcommand{\OMVPm}{\mathcal{O}_{\text{MVP}_{m}}}

\newcommand{\Sym}{\mathfrak{S}}

\title{On the outcome map of MVP parking functions: permutations avoiding 321 and 3412, and Motzkin Paths}

\author{Pamela E. Harris}
\thanks{P.~E.~Harris was supported through a Karen Uhlenbeck EDGE Fellowship.}

\author{Brian M. Kamau}
\thanks{B.~M.~Kamau 
was supported through Williams College Science Center.}

\address[P.~E. Harris]{Department of Mathematical Sciences, University of Wisconsin-Milwaukee, Milwaukee, WI 53211}
\email{\textcolor{blue}{\href{mailto:peharris@uwm.edu}{peharris@uwm.edu}}}
\address[B.~M.~Kamau]{Department of Mathematics and Statistics, Williams College, Williamstown, MA 01267}
\email{\textcolor{blue}{\href{mailto:bmk5@williams.edu}{bmk5@williams.edu}}}
\author[Mart\'{i}nez Mori]{J. Carlos Mart\'{i}nez Mori}
\address[J.~C.~Mart\'{i}nez Mori]{Center for Applied Mathematics, Cornell University, Ithaca, NY 14853}
\email{\textcolor{blue}{\href{mailto:jm2638@cornell.edu}{jm2638@cornell.edu}}}

\author[Tian]{Roger Tian}
\address[R.~Tian]{Independent Researcher, Davis, CA 95618}
\email{\textcolor{blue}{\href{mailto:rgtian@ucdavis.edu}{rgtian@ucdavis.edu}}}

\date{\today}

\begin{document}

\begin{abstract}
We introduce a new parking procedure called \emph{MVP parking} in which $n$ cars sequentially enter a one-way street with a preferred parking spot from the $n$ parking spots on the street. If their preferred spot is empty, they park there. Otherwise, they park there and the car parked in that spot is bumped to the next unoccupied spot on the street. If all cars can park under this parking procedure, we say the list of preferences of the $n$ cars is an \emph{MVP parking function} of length $n$. We show that the set of (classical) parking functions is exactly the set of MVP parking functions although the parking outcome (order in which the cars park) is different under each parking process. 
Motivating the question: Given a permutation describing the outcome of the MPV parking process, what is the number of MVP parking functions resulting in that given outcome? Our main result establishes a bound for this count which is tight precisely when the permutation describing the parking outcome avoids the patterns 321 and 3412. 
We then consider special cases of permutations and give closed formulas for the number of MVP parking functions with those outcomes. In particular, we show that the number of MVP parking functions which park in reverse order (that is the permutation describing the outcome is the longest word in $\mathfrak{S}_n$, which does not avoid the pattern 321) is given by the $n$th Motzkin number. 
We also give families of permutations describing the parking outcome for which the cardinality of the set of cars parking in that order is exponential and others in which it is linear.
\end{abstract}

\maketitle

\noindent\textbf{Keywords.} MVP parking functions, permutation pattern avoidance, Motzkin paths\\

\noindent\textbf{2020 Mathematics Subject Classification:} 05A05; 05A15; 05A19
%include 3-5 keywords and include 2020 Mathematics Subject Classification

\section{Introduction}

Throughout we let $\mathbb{N}=\{1,2,3,\ldots\}$ and, for $n\in\mathbb{N}$, we denote $[n]\coloneqq\{1,2,\ldots,n\}$. 
Consider a one-way street consisting of $n$ parking spots enumerated from 1 to $n$. 
There are $n$ cars lined up to enter the street in order, each of which has a preferred parking spot.
Let $\alpha = (a_1,\ldots,a_n) \in [n]^n$ encode the parking spot preferences, where $a_i$ is the preferred spot of car $i$. 
The parking process begins when car 1 enters the street and parks in its preferred spot $a_1$\textemdash as it is the first car to enter the street, it finds spot $a_1$ unoccupied. 
Next, car 2 enters the street and attempts to park in its preferred spot $a_2$. 
If spot $a_2$ is unoccupied, it parks there.
Otherwise, it continues driving down the one-way street and parks in the first unoccupied spot in encounters, if any.
If car $2$ does not encounter any unoccupied spot, we say it is unable to park.
The parking process continues similarly for all subsequent cars.
If the preferences $\alpha=(a_1,\ldots,a_n)$ allow all cars to park, we say $\alpha$ is a \emph{parking function} of length $n$. 
Let $\PF_n$ denote the set of parking functions of length $n$.
For example, $(1,1,1,1)$ is a parking function of length four in which all cars prefer the first spot and, based on the parking rule, the cars 1, 2, 3, and 4 park in the order 1, 2, 3, and 4, respectively.

Parking functions were introduced by Konheim and Weiss in their study of hashing functions~\cite{konheim1966occupancy}.
They established that $|\PF_n|=(n+1)^{n-1}$.
Since their foundational result, parking functions have been modified and/or generalized in various ways.
We point the reader to Yan~\cite{yan2015parking} for a comprehensive survey of results.

In this work, we introduce a variant of parking functions in which later cars are considered to be a ``most valuable player'' (MVP).
We modify the ``classical parking rule'' (wherein a newly arrived car starts at its preferred spot and continues driving down the one-way road until it parks in the first unoccupied spot it encounters, if any) as follows.
Upon the arrival of car $i \in [n]$, it attempts to park in its preferred spot $a_i$.
If spot $a_i$ is unoccupied, it parks there.
Otherwise, if spot $a_i$ is occupied by an earlier-arriving car $j \in [n]$ with $j < i$, the MVP car $i$ ``bumps'' car $j$ out of spot $a_i$.
Then, the newly bumped car $j$ continues driving down the one-way street until it parks in the first unoccupied spot it encounters, if any.
We refer to this parking rule as the ``MVP parking rule.''
Note that the bumping of a car out of a spot happens only once\textemdash there is no cascading effect in which a newly bumped car itself bumps further cars while on its search for a new parking spot.

If the preferences $\alpha=(a_1,a_2,\ldots,a_n)\in[n]^n$ allow all cars to park under the MVP parking rule, we say $\alpha$ is an \emph{MVP parking function} of length $n$. 
Let $\MVP_n$ denote the set of MVP parking functions of length $n$. 
For example, $(1,1,1,1)$ is an MVP parking function of length four in which all cars prefer the first spot and, based on the MVP parking rule, the cars 1, 2, 3, and 4 park in the order 4, 1, 2, and 3, respectively. 

In Section~\ref{sec: preliminaries} we enumerate and characterize MVP parking functions by leveraging their connection to (classical) parking functions, and lay groundwork necessary to study their \emph{outcome map}.
Our first result (Theorem~\ref{theorem: v in PF iff v in MVP}) establishes that a list of preferences $\alpha \in [n]^n$ is an MVP parking function (of length $n$) if and only if it is a parking function (of length $n$), readily implying that $|\MVP_n|=(n+1)^{n-1}$. 
With this enumeration at hand, we focus on the outcomes of the parking processes (i.e.,~the order in which the cars park) and note that these can be vastly different depending on the parking rule.
To make this precise, let $\Sym_n$ denote the symmetric group on $n$ letters. 
Define the outcome map (under the classical parking rule) as $\OPFn:\PF_n\to\Sym_n$ given by $\OPFn(\alpha)=(\pi_1, \pi_2, \ldots, \pi_n)$, where car $\pi_i$ parks in spot $i$ given the preference list $\alpha$\textemdash note that this may or may not be the spot it preferred.
The outcome map under the classical parking rule appears in Stanley~\cite[Exercise 5.49(d,e)]{Stanley} and a detailed proof can be found in~\cite[Proposition 3.1]{knaplestats}.
The result states that if $\pi=(\pi_1,\pi_2,\ldots,\pi_n)\in\Sym_n$, then
\begin{align*}
    |\OPFn^{-1}(\pi)|\coloneqq|\{\alpha\in\PF_n\,:\, \OPFn(\alpha)=\pi\}|=\prod_{i=1}^n\ell(i;\pi),
\end{align*}
where $\ell(i;\pi)$ is the length of the longest subsequence $\pi_j,\ldots,\pi_i$ of $\pi$ such that $\pi_k\leq \pi_i$ for all $j\leq k \leq i$.  
The authors in \cite{knaplestats} extend this result to the outcome map under the ``$k$-Naples parking rule,'' which allows cars to back up to $k$ spots if they find their preferred spot occupied prior to continuing forward. 
They obtain an analogous result for the size of the fibers of the corresponding outcome map \cite[Theorem 3.1]{knaplestats}
and use it to give a nonrecursive formula for the number of $k$-Naples parking functions of length $n$ \cite[Theorem 3.1]{knaplestats}.

Motivated by the results in \cite{knaplestats}, in Section~\ref{sec:fibers} we study the fibers of the outcome map under the MVP parking rule, $\OMVPn:\MVP_n\to\Sym_n$ defined analogously by
\[\OMVPn(\alpha)=(\pi_1,\pi_2,\ldots,\pi_n),\]
where car $\pi_i$ parks in spot $i$ given the preference list $\alpha$, which again may or may not be the spot it preferred. 
Note that even though $\PF_n=\MVP_n$ as sets, for an $\alpha$ in these sets that is not a permutation, the outcomes $\OPFn(\alpha)$ and $\OMVPn(\alpha)$ can be vastly different. As we notice above, $\alpha=(1,1,1,1)$ results in $\Out_{{\text{PF}}_4}(\alpha)=(1,2,3,4)$ while $\Out_{{\text{MVP}}_4}(\alpha)=(4,3,2,1)$.

In Section~\ref{sec: the outcome map} we study the fibers of the MVP outcome map through the lens of permutation pattern avoidance.
Our first main result (Theorem~\ref{thm:upper bound}) gives an upper bound for the size of the fiber of $\pi\in \Sym_n$, namely the cardinality of the set
\begin{align*}
    \OMVPn^{-1}(\pi)\coloneqq\{v\in\MVP_n:\OMVPn(v)=\pi\}.
\end{align*}
We then give a complete characterization for when the bound in Theorem~\ref{thm:upper bound} is tight based on the associated permutation (determining the parking order) avoiding the permutation patterns 321 and 3412 (Theorem~\ref{thm:equality}). In Section~\ref{sec:special perms}, we consider special families of permutations and give the cardinality of their corresponding fibers. Among the permutations considered is the longest word $w_0=(n,n-1,\ldots,3,2,1)\in\Sym_n$, for which we establish that $|\OMVPn^{-1}(w_0)|$ is enumerated by the Motzkin numbers\footnote{OEIS \textcolor{blue}{\href{https://oeis.org/A001006}{A001006}}.
The Motzkin numbers first appearead in Motzkin~\cite{motzkin1948relations}. 
See Donaghey and Shapiro~\cite{donaghey1977motzkin} for a sample of the settings in which they arise.} (Theorem~\ref{thm:motzkin}). 
We also give families of permutations for which the cardinality of the corresponding fiber is  exponential and others in which it is linear. 
We also note that we provide a python implementation for MVP parking functions and the outcome map, which can be found in \cite{Code}.
We conclude the article with some open problems.

\section{Preliminaries on MVP parking functions}
\label{sec: preliminaries}

In this section, we enumerate and characterize MVP parking functions and provide some initial results setting up our analysis of the fibers of the outcome map.

\subsection{Enumerating and characterizing MVP parking functions}
\label{sec: enumerating and characterizing mvp parking functions}
We begin with our first result which establishes the set equality of $\PF_n$ and $\MVP_n$.
\begin{theorem}
\label{theorem: v in PF iff v in MVP}
Let $n \in \mathbb{N}$ and $\alpha \in [n]^n$.
Then, $\alpha \in \PF_n$ if and only if $\alpha \in \MVP_n$.
\end{theorem}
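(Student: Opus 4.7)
The plan is to prove a slightly stronger statement: after the $i$-th car has completed its attempt, the set of occupied parking spots is identical under the classical parking rule and the MVP parking rule. Once this invariant is established, it follows immediately that all $n$ cars park under one rule iff they do under the other, yielding $\PF_n = \MVP_n$.

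First I would set up notation: let $S_i$ (resp.\ $T_i$) be the set of occupied spots after car $i$ completes its attempt under the classical (resp.\ MVP) rule, with $S_0 = T_0 = \emptyset$. The proof would then be a short induction on $i$, the hypothesis being $S_{i-1} = T_{i-1}$ (with $|S_{i-1}|=i-1$, i.e.\ no earlier car has failed).

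The core observation driving the inductive step is that, in both processes, the single spot added to the occupied set when car $i$ enters is the \emph{first unoccupied spot at position $\geq a_i$}, whenever such a spot exists. Under the classical rule this is immediate from the definition. Under the MVP rule one splits into two sub-cases: if $a_i$ is unoccupied then car $i$ parks at $a_i$, which is indeed the first unoccupied spot $\geq a_i$; if $a_i$ is already occupied then $a_i$ remains occupied (now by car $i$) while the bumped car drives forward from $a_i$ and lands at the first unoccupied spot strictly greater than $a_i$, which coincides with the first unoccupied spot $\geq a_i$ since $a_i$ itself is occupied. Hence $S_i = T_i$.

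I would close by checking the failure modes: if no unoccupied spot $\geq a_i$ exists, then under the classical rule car $i$ cannot park, and under the MVP rule the spot $a_i$ is necessarily occupied (otherwise $a_i$ would be an unoccupied spot $\geq a_i$), so the car that gets bumped by car $i$ has nowhere to go. Thus both procedures fail simultaneously on the same $\alpha$. The only real subtlety is being careful about whether the bumped car's search starts at $a_i$ or at $a_i+1$; since $a_i$ is always occupied at that moment, both interpretations give the ``first unoccupied spot $> a_i$'' agreeing with the classical update, and this is the step I would verify most carefully against the definitions. Everything else is routine.
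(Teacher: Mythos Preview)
Your proposal is correct and follows essentially the same approach as the paper: both argue by induction on $i$ that the set of occupied spots after car $i$'s attempt is identical under the two rules, with the key observation that in either process the newly occupied spot is the first unoccupied spot at position $\geq a_i$ (or both processes fail if none exists). The only cosmetic differences are that the paper encodes the occupied set as an indicator vector $\chi^i(\alpha)=\psi^i(\alpha)\in\{0,1\}^n$ rather than as a set $S_i=T_i$, and it maintains the invariant through failed steps rather than stopping at the first failure as you do; neither difference affects the argument.
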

\begin{proof}
Note that for both the parking rule and the MVP parking rule, which spots are occupied after the arrival of a car is solely a function of which spots are occupied before its arrival\textemdash which cars occupy them is immaterial.
Therefore, for any $\alpha \in [n]^n$, it suffices to check that the set of spots occupied throughout the parking process (and in particular at its conclusion) is consistent among the two parking rules. 
To do this, for each parking rule we define a sequence of functions $f^1, f^2, \ldots, f^n:[n]^n\to\{0,1\}^n$ where for any $\alpha=(a_1,a_2,\ldots,a_n) \in [n]^n$ and any $i, j \in [n]$, the $j$th entry of $f^i(\alpha)$, denoted  $f_j^i(\alpha)$, 
is $1$ if spot $j$ is occupied after the arrival of car $i$ given $\alpha$, and $0$ otherwise.

Given $\alpha$, let $\chi(\alpha) = (\chi^1(\alpha), \chi^2(\alpha), \ldots, \chi^n(\alpha)) \in \{0,1\}^n$ be the outputs of such functions under the classical parking rule.
Similarly, given $\alpha$, let $\psi(\alpha) = (\psi^1(\alpha), \psi^2(\alpha), \ldots, \psi^n(\alpha)) \in \{0,1\}^n$ be the outputs of such functions under the MVP parking rule.
Next, we show that $\chi(\alpha) = \psi(\alpha)$.

When the first car arrives, it parks in its preferred spot under either rule.
Therefore, $\chi^1(\alpha) = \psi^1(\alpha)$.
Now, suppose by way of induction that $\chi^i(\alpha) = \psi^i(\alpha)$ for some $1 \leq i < n$.
We now need to show that $\chi^{i + 1}(\alpha) = \psi^{i+1}(\alpha)$.
Recall $a_{i+1} \in [n]$ is the preferred spot of car $i + 1$.
If $\chi_{a_{i+1}}^i(\alpha) = \psi_{a_{i+1}}^i(\alpha) = 0$, car $i+1$ parks in spot $a_{i+1}$ under either rule and so $\chi^{i+1}(\alpha) = \psi^{i+1}(\alpha)$ with $\chi_{a_{i+1}}^{i+1}(\alpha) = \psi_{a_{i+1}}^{i+1}(\alpha) = 1$.
If $\chi_{a_{i+1}}^i(\alpha) = \psi_{a_{i+1}}^i(\alpha) = 1$, there are two possible cases.
If there exists some $j > a_{i+1}$ such that $\chi_{j}^i(\alpha) = \psi_{j}^i(\alpha) = 0$, pick the smallest such $j$.
Under the classical parking rule, car $i + 1$ parks in spot $j$ and so $\chi_j^{i+1}(\alpha) = 1$ while $\chi_{a_{i+1}}^i(\alpha) = \chi_{a_{i+1}}^{i+1}(\alpha) = 1$ remains the same.
Under the MVP parking rule, car $i + 1$ parks in spot $a_{i+1}$ while the car that was parked in spot $a_{i+1}$ now parks in spot $j$.
Then, $\psi_{a_{i+1}}^i(\alpha) = \psi_{a_{i+1}}^{i+1}(\alpha) = 1$ remains the same while $\psi_j^{i+1}(\alpha) = 1$.
On the other hand, suppose there does not exist $j > a_{i+1}$ such that $\chi_{j}^i(\alpha) = \psi_{j}^i(\alpha) = 0$.
Under the classical parking rule, car $i + 1$ is unable to park and so $\chi^i(\alpha) = \chi^{i+1}(\alpha)$.
Under the MVP parking rule, car $i + 1$ parks in spot $a_{i+1}$ while the car that was parked in spot $a_{i+1}$ is now unable to park, and so $\psi^i(\alpha) = \psi^{i+1}(\alpha)$.
In both cases,
we have $\chi^{i+1}(\alpha) = \psi^{i+1}(\alpha)$.
\end{proof}

As a consequence of Theorem~\ref{theorem: v in PF iff v in MVP}, classical results on the enumeration and characterization of (classical) parking functions extend to MVP parking functions. This includes the following.
\begin{corollary}[\cite{konheim1966occupancy},~Lemma~1]
If $n\in\mathbb{N}$, then $|\MVP_n|=(n+1)^{n-1}$.
\end{corollary}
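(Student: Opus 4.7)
The plan is to deduce the corollary immediately from Theorem~\ref{theorem: v in PF iff v in MVP} together with the classical enumeration of parking functions due to Konheim and Weiss. First I would invoke Theorem~\ref{theorem: v in PF iff v in MVP} to assert the set equality $\MVP_n = \PF_n$ (as subsets of $[n]^n$); this is not just an equinumerosity statement but an honest equality of the underlying sets of preference lists, so in particular $|\MVP_n| = |\PF_n|$. Then I would cite the classical result $|\PF_n| = (n+1)^{n-1}$ from Konheim and Weiss~\cite{konheim1966occupancy}, and conclude.

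Since the entire argument amounts to chaining one equality of cardinalities with one classical theorem, there is essentially no obstacle to overcome. The only thing worth being careful about is making clear that Theorem~\ref{theorem: v in PF iff v in MVP} gives a genuine set-theoretic equality (because the parking-versus-not-parking status of a preference list depends only on which spots become occupied, and the inductive argument in the proof shows these are identical under the two rules), so no independent bijective argument is required. One could alternatively phrase the proof as ``The set equality $\MVP_n = \PF_n$ established in Theorem~\ref{theorem: v in PF iff v in MVP} immediately yields $|\MVP_n| = |\PF_n| = (n+1)^{n-1}$, where the last equality is Lemma~1 of~\cite{konheim1966occupancy}.''

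I would expect the write-up to fit in a single sentence, perhaps two if one wants to explicitly restate the Konheim–Weiss formula before applying it.
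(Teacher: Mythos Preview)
Your proposal is correct and matches the paper's own approach exactly: the paper states this corollary immediately after Theorem~\ref{theorem: v in PF iff v in MVP} as a direct consequence of the set equality $\MVP_n=\PF_n$ together with the Konheim--Weiss enumeration, without providing any further argument. Your one-sentence write-up is precisely what the paper does (implicitly).
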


\begin{corollary}[See~\cite{yan2015parking}, pp. 836]
Let $n \in \mathbb{N}$ and $\alpha \in [n]^n$.
Let $b(\alpha) = (b_1(\alpha), b_2(\alpha), \ldots, b_n(\alpha))$ be the nondecreasing rearrangement of $\alpha$, so that $b_1(\alpha) \leq b_2(\alpha) \leq \cdots \leq b_n(\alpha)$.
Then, $\alpha \in \MVP_n$ if and only if $b_i(\alpha) \leq i$ for all $i \in [n]$.
\end{corollary}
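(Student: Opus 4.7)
The plan is to exploit Theorem~\ref{theorem: v in PF iff v in MVP} to reduce this to the classical characterization of (ordinary) parking functions. Since $\alpha \in \MVP_n$ if and only if $\alpha \in \PF_n$, the statement to prove is equivalent to the Konheim--Weiss characterization: $\alpha \in \PF_n$ if and only if the nondecreasing rearrangement $b(\alpha)$ satisfies $b_i(\alpha) \leq i$ for all $i \in [n]$. Structurally, the proof is two lines: invoke Theorem~\ref{theorem: v in PF iff v in MVP}, then cite the classical result (as the statement itself indicates by the \cite{yan2015parking} reference). The entire content is therefore in recalling why the classical characterization holds, should a self-contained argument be desired.

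For the forward direction, I would argue by contrapositive: suppose $b_i(\alpha) > i$ for some $i \in [n]$. Then strictly fewer than $i$ entries of $\alpha$ belong to $\{1,2,\ldots,i\}$, since $b_i(\alpha), b_{i+1}(\alpha), \ldots, b_n(\alpha)$ are all strictly larger than $i$. Under the classical parking rule, a car whose preferred spot is in $\{i+1, \ldots, n\}$ can never occupy a spot in $\{1,2,\ldots,i\}$, because cars only move forward from their preferred spot. Hence at most $i-1$ spots among $\{1,2,\ldots,i\}$ can ever be filled, leaving at least one permanently empty spot in that range; some car then drives off the end of the street, showing $\alpha \notin \PF_n$.

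For the backward direction, the cleanest plan is to first establish the well-known invariance lemma: the multiset of occupied spots at the end of the classical parking process depends only on the multiset of preferences, not on the order in which cars arrive. Once this is in hand, one may assume $\alpha = b(\alpha)$ is nondecreasing; then an easy induction on $i$ shows that car $i$ parks in spot $i$ or later, using $b_i(\alpha) \leq i$ together with the fact that spots $1,2,\ldots,i-1$ may have been filled by the first $i-1$ cars but cannot force car $i$ past spot $n$.

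The main (and only real) obstacle is the invariance lemma in the backward direction, since the forward direction is a clean pigeonhole argument and the rest is bookkeeping. However, since the statement is flagged as a corollary and cited to~\cite{yan2015parking}, I expect the authors will present the proof in one sentence as a direct consequence of Theorem~\ref{theorem: v in PF iff v in MVP} and the classical characterization, without reproving the latter.
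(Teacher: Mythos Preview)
Your proposal is correct and matches the paper's approach exactly: the paper states this corollary without any proof, simply as a consequence of Theorem~\ref{theorem: v in PF iff v in MVP} together with the classical characterization of parking functions cited from~\cite{yan2015parking}. Your additional sketch of a self-contained argument for the classical characterization goes beyond what the paper provides, but is sound and standard.
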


Note that although $\PF_n=\MVP_n$, provided $\alpha\in\PF_n\setminus\Sym_n$, the outcome maps $\OPFn(\alpha)$ and 
$\OMVPn(\alpha)$ can be vastly different, and this is in no way explained by Theorem~\ref{theorem: v in PF iff v in MVP}.
Thus the remainder of the manuscript is dedicated to the question: Given a permutation $\pi\in \Sym_n$, can we characterize and enumerate the set of MVP parking functions $\alpha\in\MVP_n$ which satisfy $\OMVPn(\alpha)=\pi$?

\subsection{MVP parking functions and their outcome map}\label{sec:fibers}

For a finite set $S$, let $\Sym_S$ denote its set of permutations.
For ease of notation, we use $\Sym_n$ to denote $\Sym_{[n]}$.
For $\pi \in \Sym_n$, we adopt the following (unusual, but convenient one-line) notation $\pi = (\pi_1,\pi_2,\ldots,\pi_n)$ where, for $j \in [n]$, we denote $\pi_j = \pi(j)$. \begin{proposition}
For any $n \in \mathbb{N}$, $\OMVPn$ is a well-defined function.
\end{proposition}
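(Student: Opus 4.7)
The plan is to verify the two usual requirements for $\OMVPn\colon \MVP_n \to \Sym_n$ to be a well-defined function: first, that the MVP parking process is deterministic so that $\OMVPn(\alpha)$ depends only on $\alpha$; and second, that $\OMVPn(\alpha)$ lies in the stated codomain $\Sym_n$, i.e., it is actually a permutation of $[n]$ rather than merely a tuple in $[n]^n$.

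For determinism, I would argue by induction on the number of cars that have arrived. At each step $i$, the arriving car is uniquely specified (car $i$) and its preferred spot $a_i$ is uniquely specified by $\alpha$. Given the configuration of occupied spots before its arrival, the MVP rule prescribes a single action: either $a_i$ is empty and car $i$ parks there; or $a_i$ is occupied by some car $j < i$, in which case car $j$ is evicted and then drives forward to the first unoccupied spot strictly greater than $a_i$, which, if it exists, is uniquely determined. Since the only choice point in the description of the MVP rule is this ``first unoccupied spot,'' which is unambiguous, the entire history of arrivals and bumps is determined by $\alpha$.

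For the codomain, I would use the hypothesis $\alpha \in \MVP_n$ to conclude that all $n$ cars park. Because spots can hold at most one car simultaneously, and the MVP rule only ever places a car into a spot that is empty at the moment it is placed there (the arriving car occupies $a_i$ only after the car already there, if any, is evicted, and the evicted car moves to a spot that was unoccupied before), the $n$ cars end up in $n$ pairwise distinct spots of $[n]$. Hence the map $i \mapsto \pi_i$, where $\pi_i$ is the car in spot $i$, is a bijection $[n] \to [n]$, so $\OMVPn(\alpha) \in \Sym_n$.

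This statement is genuinely routine once the MVP rule has been carefully described, so there is no substantial obstacle; the only subtle point, and the one I would be most careful to spell out, is that the bumping mechanism does not create any ambiguity (the bumped car never itself bumps another, per the explicit remark in the introduction that ``there is no cascading effect''), which is precisely what guarantees a single next state at each step and hence a single outcome.
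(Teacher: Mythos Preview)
Your proposal is correct and follows essentially the same approach as the paper: both verify determinism of the MVP process (the paper simply asserts that $\pi$ ``is solely determined by $\alpha$ and the MVP parking rule,'' while you spell out the inductive step) and that the outcome lies in $\Sym_n$ (the paper appeals to the definition of $\MVP_n$, while you make the pigeonhole/distinct-spots argument explicit). The paper packages the two checks as totality and univalence of a binary relation, but the content is identical to your determinism-plus-codomain verification.
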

\begin{proof}
Let $\alpha \in \MVP_n$.
By definition, after the arrival of cars with preferences in $\alpha$, each of the $n$ spots is occupied by one of the $n$ cars\textemdash we represent said configuration with $\pi \in \Sym_n$. 
That is, $\pi_j = i$ indicates spot $j$ is occupied by car $i$, where $i, j \in [n]$.
Note that $\pi$ is solely determined by $\alpha$ and the $\MVP$ parking rule.
Therefore, 
\begin{enumerate}[label=(\roman*)]
    \item $\OMVPn \subseteq \MVP_n \times \Sym_n$, where we think of $\OMVPn$ as a binary relation,
    \item for every $\alpha \in \MVP_n$, there exists $\pi \in \Sym_n$ such that $(\alpha, \pi) \in \OMVPn$, and 
    \item for every $\alpha \in \MVP_n$ and every $\pi, \pi' \in \Sym_n$, $(\alpha, \pi), (\alpha, \pi') \in \OMVPn$ implies $\pi = \pi'$.
\end{enumerate}
That is, $\OMVPn \subseteq \MVP_n \times \Sym_n$ is a total univalent relation\footnote{Recall that a function $f:X\to Y$ can be described as a binary relation on a subset $R \subseteq X \times Y$ that is univalent, i.e.~ $\forall x\in X$, 
$\forall y, z \in Y$, $((x,y)\in R\wedge (x,z)\in R)\rightarrow y=z$, 
and total, i.e.~$\forall x\in X, \exists y\in Y$, $(x,y)\in R$.}.
\end{proof}
For $\pi \in \Sym_n$, let
\begin{align*}
     \OMVPn^{-1}(\pi) = \{\alpha \in \MVP_n : \OMVPn(\alpha) = \pi \} \subseteq \MVP_n
\end{align*}
be the \emph{fiber} of $\pi$. 
As we show next, certain fibers are very simple.
\begin{proposition}
\label{prop: fiber of identity}
For any $n \in \mathbb{N}$, $\OMVPn^{-1}((1, 2, \ldots, n)) =\{(1,2, \ldots, n)\}$.
\end{proposition}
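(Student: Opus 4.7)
The plan is to proceed by induction on $n$. The base case $n=1$ is immediate since $[1]^1 = \{(1)\}$, so assume the statement for $n-1$ and let $\alpha = (a_1,\ldots,a_n) \in \MVP_n$ satisfy $\OMVPn(\alpha) = (1,2,\ldots,n)$.

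The first step is to establish the key observation that any car ever placed at spot $n$, whether by its own preference or by being bumped forward, is necessarily stuck there: any subsequent attempt to displace it would require it to drive past spot $n$, contradicting $\alpha \in \MVP_n$. In particular, exactly one car ends at spot $n$, namely the one first placed there.

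Using this observation, I would deduce two facts. First, since the outcome places car $n$ at spot $n$ and no later car can bump car $n$ (as it arrives last), we must have $a_n = n$, and spot $n$ must be empty when car $n$ arrives (otherwise car $n$ would bump an occupant off the end of the street). Second, no car $j < n$ can ever occupy spot $n$ during the first $n-1$ arrivals, since it would then remain stuck there and end at spot $n$, contradicting the outcome; this rules out both $a_i = n$ for any $i < n$ and any bumping event that would send a car forward to spot $n$.

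Consequently the first $n-1$ cars park entirely within spots $1,\ldots,n-1$ with preferences $(a_1,\ldots,a_{n-1}) \in [n-1]^{n-1}$, and car $n$'s arrival does not disturb their configuration. Thus $(a_1,\ldots,a_{n-1}) \in \MVP_{n-1}$ and $\Out_{\MVP_{n-1}}((a_1,\ldots,a_{n-1})) = (1,2,\ldots,n-1)$, so the inductive hypothesis gives $(a_1,\ldots,a_{n-1}) = (1,2,\ldots,n-1)$, yielding $\alpha = (1,2,\ldots,n)$. The main obstacle is articulating the ``stuck at spot $n$'' observation cleanly, since a single structural principle must simultaneously force $a_n = n$, the emptiness of spot $n$ throughout the earlier phase, and the non-appearance of $n$ among the earlier preferences (and as the target of any earlier bumping).
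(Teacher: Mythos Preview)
Your proof is correct but takes a different route from the paper's. The paper argues from the left: since bumped cars only move forward, car $1$ ending at spot $1$ forces $a_1 = 1$ and forbids any later car from preferring spot $1$; with spot $1$ settled, the same reasoning applies to car $2$ at spot $2$, and so on inductively across the spots. You instead peel off the rightmost spot via induction on $n$, using that spot $n$ is absorbing (any car landing there cannot be displaced without falling off the street) to force $a_n = n$ and to confine the first $n-1$ cars to an instance of size $n-1$. Both arguments are short; the paper's is marginally more direct because it exploits the monotonicity of car positions in a single left-to-right sweep, whereas your reduction is more modular and would adapt cleanly to variants where one wants to strip off a terminal spot.
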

\begin{proof}
Clearly $(1, 2, \ldots, n) \in \OMVPn^{-1}((1, 2, \ldots, n))$.
To see the converse, suppose $\pi = (1, 2, \ldots, n)$ and let $\alpha = (a_1,a_2,\ldots,a_n) \in \OMVPn^{-1}((1, 2, \ldots, n))$. Recall that car $i' \in [n]$ bumps car $i \in [n]$ if and only if $i' > i$ and car $i$ occupies the spot preferred by car $i'$ upon the arrival of the latter.
Therefore, if car $1$ parks in spot $1$, it must be the case that car $1$ prefers spot $1$ and no subsequent car prefers spot $1$ (i.e., $\pi_1 = 1$ necessitates $a_1 = 1$ and $a_i > 1$ for all $i > 1$).
Similarly, if car $2$ parks in spot $2$, it must be the case that car $2$ prefers spot $2$ and no subsequent car prefers spot $2$ (i.e., $\pi_1 = 1$, $\pi_2 = 2$ necessitates $a_1 = 1$, $a_2 = 2$, and $a_i > 2$ for all $i > 2$).
We can extend this argument inductively to conclude that $\pi = (1, 2, \ldots, n)$ necessitates $\alpha = (1, 2, \ldots, n)$.
\end{proof}
Proposition~\ref{prop: fiber of identity} implies $|\OMVPn ^{-1}((1, 2, \ldots, n))| = 1$.
This is in fact as small as a fiber as can be\textemdash our next result implies that, for all $\pi \in \Sym_n$, we have $|\OMVPn^{-1}(\pi)| \geq 1$.
\begin{theorem}\label{thm:size>=1}
Let $\pi \in \Sym_n$. 
Then, $\OMVPn(\pi) = \pi^{-1}$.
\end{theorem}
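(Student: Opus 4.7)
The plan is to prove this by a short induction on the arrival order, exploiting the key structural feature that a permutation preference list has pairwise distinct entries, so no car ever competes for an already-occupied spot and hence no bumping occurs.

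More precisely, I would argue the following claim by induction on $i \in [n]$: when cars park according to the MVP rule with preference list $\pi = (\pi_1,\pi_2,\ldots,\pi_n) \in \Sym_n$, car $i$ parks in spot $\pi_i$, and the set of occupied spots after car $i$'s arrival is $\{\pi_1,\pi_2,\ldots,\pi_i\}$. The base case $i=1$ is immediate since the first car always finds its preferred spot empty. For the inductive step, suppose the claim holds up to some $i < n$, so that after car $i$'s arrival, for each $j \leq i$, car $j$ occupies spot $\pi_j$. Because $\pi$ is a permutation, $\pi_{i+1}$ is distinct from each of $\pi_1,\ldots,\pi_i$, so spot $\pi_{i+1}$ is unoccupied. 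Under the MVP rule, car $i+1$ then parks in $\pi_{i+1}$ and no bumping takes place, so the occupied-set updates to $\{\pi_1,\ldots,\pi_{i+1}\}$, completing the induction. In particular, this also verifies that $\pi \in \MVP_n$ (which we already know from Theorem~\ref{theorem: v in PF iff v in MVP} since every permutation is a classical parking function), so $\OMVPn(\pi)$ is defined.

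With the induction in hand, the remainder is just unwinding the definition of $\OMVPn$. For each $j \in [n]$, the car occupying spot $j$ is the unique $i \in [n]$ with $\pi_i = j$, namely $i = \pi^{-1}(j)$. Thus the $j$th coordinate of $\OMVPn(\pi)$ equals $\pi^{-1}(j)$, and therefore $\OMVPn(\pi) = \pi^{-1}$, as required.

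There isn't really a hard step here; the entire content of the theorem is the observation that distinct preferences eliminate all contention, so the MVP rule and the classical rule both reduce to the trivial assignment ``car $i$ parks in its preferred spot $\pi_i$.'' The only thing to be careful about is to explicitly rule out bumping in the inductive step (by invoking injectivity of $\pi$), since the MVP rule introduces a new source of potential displacement not present when the preference list has no repeats.
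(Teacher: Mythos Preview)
Your proof is correct. Both arguments ultimately rest on the same observation\textemdash that a permutation preference list has distinct entries, so no car ever finds its preferred spot occupied and hence parks exactly where it prefers\textemdash but the paper packages this rather differently. Instead of running the parking process forward as you do, the paper works backward: it starts from the desired outcome $\pi^{-1}$, defines a preference vector $\alpha$ by declaring $a_j = i$ whenever $\pi^{-1}_i = j$, asserts (with the key ``no bumping'' step hidden in the phrase ``by construction'') that $\alpha$ yields outcome $\pi^{-1}$, and then spends most of its effort on an algebraic verification that $\alpha = \pi$. Your forward induction is more direct and makes the crucial no-bumping step explicit, which is arguably cleaner; the paper's backward construction is more in the spirit of ``find the preimage'' and foreshadows the fiber analysis that follows, but at the cost of obscuring why the construction actually works.
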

\begin{proof}
% To begin we remark that since $\pi$ is a permutation the result $\OMVPn(\pi)$ is as well. 
It suffices to show that $\pi \in \OMVPn^{-1}(\pi^{-1})$.
Consider the preference vector $\alpha = (a_1, a_2, \ldots, a_n)$ where, if $\pi_{i}^{-1} = j$, then $a_j = i$.
We have $\alpha \in \OMVPn^{-1}(\pi^{-1})$ by construction.
We claim $\alpha \in \Sym_n$.
To prove this, it suffices to show that for every $i \in [n]$, there exists a unique $j \in [n]$ satisfying $a_j = i$.
By way of contradiction, suppose there exists $i \in [n]$ such that there does not exist a unique $j \in [n]$ satisfying $a_j = i$, and fix any such $i$.
If there is no $j \in [n]$ satisfying $a_j = i$, then $\pi_{i}^{-1} \notin [n]$, contradicting $\pi^{-1} \in \Sym_n$.
Similarly, if there are distinct $j, j' \in [n]$ satisfying $a_j = a_{j'} = i$, then $\pi_i^{-1} = j$ and $\pi_i^{-1} = j'$, contradicting $\pi^{-1} \in \Sym_n$.
Therefore, it remains to show that $\alpha = \pi$, which is to show $\alpha \cdot \pi^{-1} = (1, 2, \ldots, n)$.
Let $i \in [n]$. 
Then, $\pi_i^{-1} = j$ for some $j \in [n]$, in which case $a_j = i$ by construction.
That is, $(\alpha \cdot \pi^{-1})_i = i$ for all $i \in [n]$, implying $\alpha = \pi$.
\end{proof}
\begin{corollary}
For each $\pi \in \Sym_n$, we have $|\OMVPn^{-1}(\pi)| \geq 1$.
\end{corollary}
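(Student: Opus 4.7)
The plan is to invoke Theorem~\ref{thm:size>=1} with $\pi^{-1}$ in the role of its hypothesis, rather than $\pi$ itself. Given any $\pi \in \Sym_n$, note that $\pi^{-1} \in \Sym_n \subseteq \MVP_n$; inclusion holds because the nondecreasing rearrangement of a permutation is $(1,2,\ldots,n)$, which trivially satisfies the characterization $b_i(\alpha) \leq i$ for all $i \in [n]$. Hence $\pi^{-1}$ is a legitimate preference vector to which Theorem~\ref{thm:size>=1} applies.

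Applying the theorem yields $\OMVPn(\pi^{-1}) = (\pi^{-1})^{-1} = \pi$, so $\pi^{-1} \in \OMVPn^{-1}(\pi)$. This witnesses that the fiber is nonempty, giving $|\OMVPn^{-1}(\pi)| \geq 1$ as desired.

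There is no real obstacle here; the corollary is a direct consequence of the theorem, obtained by swapping the roles of $\pi$ and $\pi^{-1}$. The only small bookkeeping point is to observe that $\pi^{-1}$ belongs to $\MVP_n$ so that $\OMVPn$ can be evaluated at it, which we handled above.
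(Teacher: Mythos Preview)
Your proof is correct and follows essentially the same approach as the paper: both apply Theorem~\ref{thm:size>=1} to $\pi^{-1}$ to exhibit $\pi^{-1}$ as an explicit element of the fiber $\OMVPn^{-1}(\pi)$. The paper's version is terser (it just notes that the inverse exists), while you spell out the bookkeeping that $\pi^{-1}\in\Sym_n\subseteq\MVP_n$, but the substance is identical.
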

\begin{proof}
For each $\pi \in \Sym_n$, there exists a unique $\pi^{-1} \in \Sym_n$ such that $\pi \cdot \pi^{-1} = (1, 2, \ldots, n)$.
\end{proof}
Before stating our next result we recall that a permutation $\pi$ is said to be an involution if $\pi=\pi^{-1}$. Moreover, $\pi$ is an involution if consists exclusively of fixed points and disjoint transpositions.
\begin{corollary}
If $\pi \in \Sym_n$ is an involution, then $\OMVPn(\pi) = \pi$.
\end{corollary}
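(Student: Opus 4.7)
The plan is to observe that this is an immediate consequence of Theorem \ref{thm:size>=1}, which we may assume since it appears earlier in the excerpt. That theorem states that for any $\pi \in \Sym_n$, the MVP outcome of feeding in $\pi$ itself as a preference list is precisely $\pi^{-1}$.

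So the proof reduces to a single substitution. First I would recall the definition of an involution: a permutation $\pi \in \Sym_n$ satisfying $\pi = \pi^{-1}$ (equivalently, $\pi$ is a product of disjoint transpositions and fixed points, as the paper notes). Then I would simply apply Theorem \ref{thm:size>=1} to obtain $\OMVPn(\pi) = \pi^{-1}$, and invoke the involution hypothesis to conclude $\pi^{-1} = \pi$, hence $\OMVPn(\pi) = \pi$.

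There is no real obstacle here, since all the work has been done in Theorem \ref{thm:size>=1}. The only thing worth being careful about is stylistic: since the corollary is a one-line deduction, the write-up should simply chain the two equalities $\OMVPn(\pi) = \pi^{-1} = \pi$, citing Theorem \ref{thm:size>=1} for the first and the involution hypothesis for the second. No case analysis, no induction, no auxiliary constructions are needed.
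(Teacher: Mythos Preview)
Your proposal is correct and matches the paper's own proof essentially verbatim: the paper simply notes that an involution satisfies $\pi = \pi^{-1}$ and implicitly invokes Theorem~\ref{thm:size>=1} to conclude $\OMVPn(\pi) = \pi^{-1} = \pi$.
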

\begin{proof}
If $\pi \in \Sym_n$ is an involution, then $\pi \cdot \pi = (1, 2, \ldots, n)$ (i.e., $\pi = \pi^{-1}$).
\end{proof}

\section{The outcome map and permutations avoiding $(3,2,1)$ and $(3,4,1,2)$}
\label{sec: the outcome map}

Next we give an upper bound on the cardinality of the fibers of the outcome map. This work relies on the following definitions.
\begin{definition}\label{def:cars}
Let $\pi \in \Sym_n$ where $\pi_j = i$ indicates the $j$th spot is occupied by the $i$th car.
For each $j \in [n]$, find $i \in [n]$ such that $\pi_j = i$. 
Then, let
\begin{align*}
    \mathcal{C}_j(\pi) \coloneqq \left( \{\pi_1,\pi_2,\ldots,\pi_{j-1} \} \cap \{i + 1, i + 2, \ldots, n\} \right) \cup \{i\}
\end{align*}
be the set of cars that arrive \emph{after} the $i$th car (i.e., the cars numbered $i + 1, i + 2, \ldots, n$) that park to the \emph{left} of spot $j$ (i.e., in spots $1, 2, \ldots, j-1$), together with the $i$th car itself.
\end{definition}
\begin{definition}\label{prefSpots}
Let $\pi \in \Sym_n$ where $\pi_j = i$ indicates the $j$th spot is occupied by the $i$th car.
For each $j \in [n]$, let 
\begin{align*}
    \Omega_j(\pi) \coloneqq \{k \in [j] : \pi_k \in \mathcal{C}_j(\pi)\}
\end{align*}
be the set of spots on or to the left of the $j$th spot (i.e., the spots numbered $1, 2, \ldots, j$) that have a car in the set $\mathcal{C}_j(\pi)$ parked in them.
Note that necessarily $j \in \Omega_j(\pi)$.
\end{definition}

We remark that Definition~\ref{def:cars} is similar to a Lehmer code for permutations, which is defined as $
L(\sigma )=(L(\sigma )_{1},\ldots ,L(\sigma )_{n})\quad {\text{where}}\quad L(\sigma )_{i}=\#\{j>i:\sigma _{j}<\sigma _{i}\},$
i.e., $L(\sigma_i)$ counts the number of terms in $(\sigma_1,\ldots,\sigma_n)$ to the right of $\sigma_i$ that are smaller than it. For more on Lehmer codes we point the interested reader to \cite{Lehmer}.

\begin{example}
Let $\pi = 341526$. Then 
$
    \mathcal{C}_2(\pi) \coloneqq \{3,4,5,2\}
$
since cars $3,4,5$ arrived after car $2$ and have parked left of car $2$. Then $
    \Omega_2(\pi) \coloneqq \{1,2,4,5\}
$,
which correspond to the spots cars $3,4,5,2$ occupy in $\pi$.
\end{example}

With these definitions at hand, we obtain the following upper bound.
\begin{theorem}\label{thm:upper bound}
If $\pi \in \Sym_n$,
then
\begin{align}\label{main:inequality}
    |\OMVPn^{-1}(\pi)| \leq \prod_{j=1}^{n}|\Omega_{j}(\pi)|.
\end{align}
\end{theorem}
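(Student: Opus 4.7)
The plan is to show that for each spot $j \in [n]$, the car $i = \pi_j$ that ends up at spot $j$ must have had its original preference $a_i$ lying in the set $\Omega_j(\pi)$. Since an MVP preference vector $\alpha$ is specified by the values $a_1, \ldots, a_n$ and the map $j \mapsto \pi_j$ is a bijection on $[n]$, each car is pinned to one of at most $|\Omega_j(\pi)|$ choices where $j = \pi^{-1}(i)$, and multiplying over $j$ yields the desired product.

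To establish the claim at a fixed $j$, I will track the trajectory of car $i = \pi_j$ under the MVP rule. A key observation is that when car $i$ arrives, it always occupies its preferred spot $a_i$ immediately; it is the previous occupant (if any) who gets displaced forward. Hence car $i$ first lands at $s_0 := a_i$. Any subsequent displacement of car $i$ is triggered by the arrival of some later car $k > i$ that prefers the current spot of $i$, whereupon $i$ relocates to the next unoccupied spot to the right. Let $s_0 < s_1 < \cdots < s_r = j$ denote the strictly increasing sequence of spots occupied by car $i$ throughout the process.

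Next, I will show that for every $t < r$, the spot $s_t$ is occupied in the final configuration by a car whose index is strictly greater than $i$. When $i$ is bumped from $s_t$ by a car $k_t > i$, the car $k_t$ slides into $s_t$. Any later displacement at $s_t$ must again be caused by the arrival of a still-later car, so the occupant of $s_t$ can only be replaced by a car of even higher index. Hence the final occupant of $s_t$ lies in $\{i+1, \ldots, n\}$ and parks in the spot $s_t < j$, which places $s_t \in \mathcal{C}_j(\pi)$ and therefore $s_t \in \Omega_j(\pi)$. Together with $s_r = j \in \Omega_j(\pi)$, this forces $a_i = s_0 \in \Omega_j(\pi)$, bounding the admissible choices for $a_i$ by $|\Omega_j(\pi)|$.

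Taking the product over all $j \in [n]$ yields inequality~\eqref{main:inequality}. The main obstacle will be to rigorously justify the two invariants used above, namely that car $i$'s trajectory is strictly increasing and that every spot it vacates is continuously occupied thereafter by a car of index greater than $i$. Both will follow from an induction on the arrivals subsequent to car $i$, making careful use of the MVP rule that a newly bumped car never itself bumps others while searching for a new spot; this prevents any cascade that could otherwise threaten the monotonicity of the trajectory or introduce an earlier-arriving occupant at $s_t$.
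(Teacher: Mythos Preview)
Your proposal is correct and follows the same strategy as the paper: show that $a_{\pi_j} \in \Omega_j(\pi)$ for each $j$, then multiply. The paper's proof is essentially a one-sentence assertion of this fact; your trajectory-tracking argument is a more detailed justification of the same idea (one minor slip: where you write ``places $s_t \in \mathcal{C}_j(\pi)$'' you mean $\pi_{s_t} \in \mathcal{C}_j(\pi)$, since $\mathcal{C}_j(\pi)$ is a set of cars, whence indeed $s_t \in \Omega_j(\pi)$).
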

\begin{proof}
Let $\alpha = (a_1,a_2,\ldots,a_n) \in \OMVPn^{-1}(\pi)$. Note that for each $i,j \in [n]$ with $\pi_j = i$, it must be the case that the preference $a_i$ of the $i$th car satisfies $a_i \in \Omega_j(\pi)$.
To see this, note that by the MVP parking rule, if car $i' \in [n]$ with $i' > i$ ultimately parks to the left of the $j$th spot, it may have bumped car $i$ (this holds if car $i$ occupies the spot preferred by car $i'$ upon the arrival of the latter) to spot $j$.
\end{proof}

The bound in Theorem~\ref{thm:upper bound} is not tight in general since, for example, if $\pi = (1,4,6,5,2,3) \in \Sym_6$, then we have computed\footnote{Code for these computation can be found in \cite{Code}.} that $|\OMVPn^{-1}(\pi)| = 13 < 32 = \prod_{j=1}^6 |\Omega_j(\pi)|$. 
However, there are $\pi\in\Sym_n$ for which the bound in~\eqref{main:inequality} is in fact an equality. In such cases, we say that $\pi$ achieves \emph{preference independence}.
We use this wording to emphasize that one car's parking preference do not impact the preference of other cars.

\begin{example}
Consider the outcome $\pi = (5,1,2,3,6,9,4,7,8)$. We can check that $|\OMVPn^{-1}(\pi)| = 128 = \prod_{j=1}^9 |\Omega_j(\pi)|$, so $\pi$ achieves preference independence.
\end{example}

In the remainder of this section, we characterize the conditions under which $\pi \in \Sym_n$ achieves preference independence. 
To do so, we need the following definitions.

\begin{definition}
Let $(S, \leq_S)$ and $(T, \leq_T)$ be totally ordered sets on $m \in \mathbb{N}$ elements.
Let $s \in \Sym_S$ and $t \in \Sym_T$.
We say $s$ and $t$ are order-isomorphic if, for all $i, j \in [m]$ with $i < j$, $s_i \leq s_j$ if and only if $t_i \leq t_j$.
We denote order-isomorphism by $s \sim t$.
\end{definition}
For example, let $S = \{1, 2, 3, 4\}$ and $T = \{3, 4, 6, 9\}$, both with the standard ordering.
Consider $s = (2, 3, 1, 4) \in \Sym_S$ and $t = (4, 6, 3, 9) \in \Sym_T$, and note that $s \sim t$.
\begin{definition}
Let $\pi \in \Sym_n$ and $M \subseteq [n]$.
We say $\pi$ contains $\rho \in \Sym_{M}$ if $\pi$ has a subpermutation $\pi'$ such that $\pi' \sim \rho$.
Otherwise, we say $\pi$ avoids $\rho$.
\end{definition}

\begin{example}
Note $\pi = (7,\underline{3},6,2,\underline{5},4,\underline{1})$ contains $(2,3,1)$, since the subpermutation $(3,5,1) \sim (2,3,1$). Also 
$\pi = (7,\underline{3},6,2,\underline{5},\underline{4},1)$ contains $(1,3,2)$, since the subpermutation $(3,5,4) \sim (1,3,2)$.
However, $\pi = (7,3,6,2,5,4,1)$ avoids $(1,2,3)$ as there is no subpermutation in $\pi$ that is order-isomorphic to $(1,2,3)$.
\end{example}

With these definitions at hand we are ready to return to our question of interest: For what permutations does Theorem~\ref{thm:upper bound} result in preference independence and, hence, an equality in \eqref{main:inequality}? The following results fully establish this characterization. 

\begin{proposition}\label{inequality}
Let $\pi \in \Sym_n$. 
If $\pi$  contains $(3,2,1)$ or $(3,4,1,2)$, then
\[|\OMVPn^{-1}(\pi)|<\prod_{j=1}^{n}|\Omega_{j}(\pi)|.\]
\end{proposition}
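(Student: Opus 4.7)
The plan is to prove strict inequality by exhibiting a preference vector $\alpha \in \prod_{j=1}^{n}\Omega_j(\pi)$ whose MVP parking outcome differs from $\pi$. The baseline is the canonical preference $\alpha^{*}$ defined by $a^{*}_k = \pi^{-1}(k)$; since $\alpha^{*}$ is a permutation, no bumping occurs and $\OMVPn(\alpha^{*}) = \pi$. I will modify $\alpha^{*}$ at just two coordinates dictated by the forbidden pattern.

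For the 321 case at positions $j_1 < j_2 < j_3$ with values $\pi_{j_t} = i_t$ and $i_1 > i_2 > i_3$, I first refine the choice of pattern within $\pi$ so that $i_1 = \max\{\pi_m : j_1 \leq m \leq j_2\}$. Any 321 pattern can be promoted to satisfy this: whenever some $m \in (j_1, j_2)$ has $\pi_m > i_1$, replace $j_1$ by $m$ (the new pattern $(m, j_2, j_3)$ strictly increases $i_1$, so the process terminates). Define $\alpha$ by $a_{i_2} = a_{i_3} = j_1$ and $a_k = \pi^{-1}(k)$ for $k \notin \{i_2, i_3\}$. Since $\pi_{j_1} = i_1 > i_2, i_3$, both $j_1 \in \Omega_{j_2}(\pi)$ and $j_1 \in \Omega_{j_3}(\pi)$, confirming that $\alpha$ lies in the product. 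Tracing the MVP process: $i_3$ parks at $j_1$ (empty on arrival), and when $i_2$ later bumps it off $j_1$, the spot $j_2$ is empty (no car in $\alpha$ prefers $j_2$), so $i_3$ drifts right into $(j_1, j_2]$. The maximality condition implies that every subsequent bump of $i_3$ in this interval is delivered by a car indexed in $(i_2, i_1)$ that simply claims its own canonical spot in $(j_1, j_2)$---each such bump moves $i_3$ strictly rightward, but never past $j_2$. Hence $i_3$ settles at $j_2$ and stays, so the outcome at $j_3$ is not $i_3$ and $\OMVPn(\alpha) \neq \pi$.

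For the 3412 case at positions $j_1 < j_2 < j_3 < j_4$ with values $i_3 < i_4 < i_1 < i_2$, I analogously refine the pattern so that $i_1 = \max\{\pi_m : m \in [j_1, j_3] \setminus \{j_2\}\}$ and $i_2 = \max\{\pi_m : m \in [j_2, j_4]\}$, then define $\alpha$ by $a_{i_3} = a_{i_4} = j_1$ and $a_k = \pi^{-1}(k)$ otherwise. Both $j_1 \in \Omega_{j_3}(\pi) \cap \Omega_{j_4}(\pi)$ by the same reasoning as before. A four-phase trace then proceeds: $i_4$ bumps $i_3$ from $j_1$, and $i_3$ makes its way to $j_2$ via the first maximality condition; next $i_1$ bumps $i_4$ from $j_1$, sending $i_4$ to $j_3$ (blocked from $j_2$ by $i_3$, with no intermediate empty spot by the first maximality condition); finally $i_2$ arrives preferring $j_2$, bumps $i_3$, and $i_3$ lands at $j_4$ (blocked from $j_3$ by $i_4$, with no intermediate empty spot by the second maximality condition). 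The outcome swaps $i_3$ and $i_4$ relative to $\pi$ at positions $j_3$ and $j_4$, so $\OMVPn(\alpha) \neq \pi$.

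The main obstacle is rigorously carrying out the MVP trace for general $\pi$. The key invariants---``$j_2$ stays empty until $i_3$ reaches it'' in the 321 case, and the staged blocking ``$i_3$ occupies $j_2$ before $i_1$ arrives'' and ``$i_4$ occupies $j_3$ before $i_2$ arrives'' in the 3412 case---rely on the maximality conditions imposed in the pattern choice, which eliminate ``bridge'' cars of higher index that could otherwise divert $i_3$'s trajectory. The remaining cars outside the pattern play no essential role, since their unchanged preferences direct them exactly to their canonical $\pi$-spots.
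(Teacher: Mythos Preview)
Your $321$ argument is sound: the refinement terminates, and the trace showing $i_3$ settles at $j_2$ (hence not at $j_3$) is correct. The paper proceeds similarly but with a different witness $\alpha$ (sending $i_2\mapsto j_1$ and $i_3\mapsto j_2$ rather than both to $j_1$) and, notably, without needing any maximality refinement in this case.

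Your $3412$ argument has a genuine gap in the refinement step. You assert one can ``analogously'' arrange both $i_1=\max\{\pi_m:m\in[j_1,j_3]\setminus\{j_2\}\}$ and $i_2=\max\{\pi_m:m\in[j_2,j_4]\}$, but the second condition is not always attainable. Take $\pi=(3,4,1,5,2)$: it avoids $321$, its \emph{only} $3412$ pattern sits at positions $(1,2,3,5)$ with $(i_1,i_2,i_3,i_4)=(3,4,1,2)$, and $\pi_4=5>i_2=4$ violates your second maximality with no alternative pattern available. Fortunately the fix is easy: you do not need the second condition at all. Your own step~2 already places $i_4$ permanently at spot $j_3$ (no car in $\alpha$ prefers $j_3$), and since $\pi_{j_3}=i_3\neq i_4$ this alone yields $\OMVPn(\alpha)\neq\pi$; step~3 can be dropped. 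You do still owe a justification of the \emph{first} maximality condition, which is not a one-line analogue of the $321$ refinement: a violator $m\in(j_1,j_2)$ with $\pi_m>i_2$, or $m\in(j_2,j_3)$ with $\pi_m<i_2$, yields a $321$ pattern, while the remaining cases give a new $3412$ pattern with strictly larger $i_1+i_2$. The paper sidesteps this by imposing a different single condition---that $i_2=\pi_{j_2}$ be the maximum over $(j_1,j_3)$---together with a different witness ($a_{i_3}=j_2$, $a_{i_4}=j_1$), which makes its refinement a direct replacement of $j_2$.
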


\begin{proof}
Let $\pi \in \Sym_n$, where $\pi_j = i$ indicates the $j$th spot is occupied by the $i$th car.

First, suppose $\pi$ contains $(3,2,1)$.
Then, there exist $x, y, z \in [n]$ satisfying $x < y< z$ and $\pi_x > \pi_y > \pi_z$. 
By Definition~\ref{prefSpots} we have $\{x,y,z \} \subseteq \Omega_{\pi_z} (\pi)$, $\{x,y\} \subseteq \Omega_{\pi_y} (\pi)$, and $\{x\} \subseteq \Omega_{\pi_x}(\pi)$.
Suppose car $\pi_x$ prefers spot $x$ and car $\pi_y$ prefers spot $x$.
Suppose moreover that car $\pi_i$ prefers spot $i$ for all $i \in [n]$ with $i \neq x, y, z$ (recall Definition~\ref{prefSpots} implies $\{i\} \subseteq \Omega_{i}(\pi)$ for all $i \in [n]$). 
We claim car $\pi_z$ cannot prefer spot~$y$. 
Assume by way of contradiction that car $\pi_z$ prefers spot $y$. 
Since $\pi_x > \pi_y > \pi_z$, car $\pi_z$ arrives first and parks in spot $y$.
Car $\pi_y$ arrives later and parks in spot $x$.
Upon the arrival of car $\pi_x$, car $\pi_x$ parks in spot $x$ and bumps car $\pi_y$.
Since car $\pi_i$ prefers spot $i$ for all $i \in [n]$ with $i \neq x,y,z$ and since since spot $y$ is already occupied by some car $\pi_u$ with $u \geq z$ and $u \neq y$, ultimately car $\pi_y$ is bumped to the right of spot $y$, a contradiction.

Next, suppose $\pi$ contains $(3,4,1,2)$.
Then, there exist $w,x,y,z \in [n]$ satisfying $w < x < y < z$, $\pi_x > \pi_w > \pi_z > \pi_y$, and $\pi_v < \pi_x$ for any $w < v < y$ with $v \neq x$\textemdash the last condition is to say that car $\pi_x$ is the latest-arriving car that parks between spots $w$ and $y$. 
By Definition~\ref{prefSpots} we have $\{w,x,z\} \subseteq \Omega_{\pi_z} (\pi)$, $\{w, x, y\} \subseteq \Omega_{\pi_y}(\pi)$, $\{x\} \subseteq \Omega_{\pi_x}(\pi)$, and $\{w\} \subseteq \Omega_{\pi_w}(\pi)$.
Suppose car $\pi_y$ prefers spot $x$, car $\pi_w$ prefers spot $w$, and car $\pi_x$ prefers spot $x$.
Suppose moreover that car $\pi_i$ prefers spot $i$ for all $i \in [n]$ with $i \neq w, x, y, z$ (recall Definition~\ref{prefSpots} implies $\{i\} \subseteq \Omega_{i}(\pi)$ for all $i \in [n]$). 
We claim car $\pi_z$ cannot prefer spot $w$.
Assume by way of contradiction that car $\pi_z$ prefers spot $w$.
Since $\pi_x > \pi_w > \pi_z > \pi_y$, car $\pi_y$ arrives first and parks in spot $x$.
Car $\pi_z$ arrives later and parks in spot $w$. 
Car $\pi_w$ arrives later and parks in spot $w$, bumping car $\pi_z$.
Note that, by the time car $\pi_x$ arrives, car $\pi_z$ has been bumped to spot $y$ since car $\pi_i$ prefers spot $i$ for all $i \in [n]$ with $i \neq w,x,y,z$ and since car $\pi_x$ is the latest-arriving car that parks between spots $w$ and $y$. 
Therefore, upon the arrival of car $\pi_x$, car $\pi_x$ parks in spot $x$ and bumps car $\pi_y$ to the right of the already occupied spot $y$, a contradiction.
\end{proof}

Given $\pi \in \Sym_n$ and an arbitrary preference vector $\alpha \in [n]^n$, we note that a car of $\alpha$ can always park where it appears in $\pi$, by making the ``trivial preference.'' This is the rough idea of the following lemma.
\begin{lemma}
\label{trivial pref}
Let $\pi = (\pi_1,\pi_2,\ldots,\pi_n) \in \Sym_n$, and let $\alpha \in [n]^n$ be a preference vector such that car $\pi_j$ can only prefer spots in $\Omega_j(\pi)$, for all $j \in [n]$. Suppose car $\pi_i$ prefers spot $i$, for some $i \in [n]$. Then car $\pi_i$ ultimately parks in spot $i$.
\end{lemma}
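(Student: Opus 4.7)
Write $c = \pi_i$ for the car in question. The plan is to organize the argument around two claims: (1) immediately after car $c$ arrives, spot $i$ is occupied by $c$; and (2) no subsequent action by any later-arriving car dislodges $c$ from spot $i$. Together these imply that $c$ ultimately parks in spot $i$.

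Claim (1) is essentially an unpacking of the MVP rule. Upon arrival, car $c$ attempts its preferred spot $i$; if $i$ is empty, $c$ parks there, and if $i$ is occupied by some earlier-arriving car $c'' < c$, the MVP rule dictates that $c$ bumps $c''$ and occupies spot $i$ itself. In either case, right after the arrival of $c$, spot $i$ contains $c$.

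Claim (2) is the heart of the argument. I would show that no car $\pi_{j'}$ with $\pi_{j'} > c$ (i.e., arriving after $c$) can prefer spot $i$. By hypothesis, $\pi_{j'}$ may only prefer spots in $\Omega_{j'}(\pi)$, so it suffices to verify $i \notin \Omega_{j'}(\pi)$. Unpacking Definitions~\ref{def:cars} and~\ref{prefSpots}, membership $i \in \Omega_{j'}(\pi)$ would force $\pi_i \in \mathcal{C}_{j'}(\pi)$, which in turn forces either $\pi_i = \pi_{j'}$ (impossible, since $\pi_i = c < \pi_{j'}$) or $i < j'$ with $\pi_i > \pi_{j'}$ (also impossible, since $\pi_i = c < \pi_{j'}$). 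Hence no later-arriving car prefers spot $i$.

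Finally, combining Claims (1) and (2) yields the conclusion. After the arrival of $c$, the only mechanism by which $c$ could leave spot $i$ would be a later-arriving car landing on spot $i$ and bumping $c$, since by the non-cascading clause of the MVP rule a bumped car merely drives forward and parks in the first empty spot without initiating any further bumps. Claim (2) rules this mechanism out, so $c$ remains in spot $i$ for the rest of the process. I expect the main obstacle, small as it is, to be carefully translating the preference restriction through Definitions~\ref{def:cars} and~\ref{prefSpots} in Claim (2); everything else is a direct invocation of the MVP rule.
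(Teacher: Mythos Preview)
Your proposal is correct and follows essentially the same approach as the paper: show that no later-arriving car can prefer spot $i$ (via Definitions~\ref{def:cars} and~\ref{prefSpots}), and that when car $\pi_i$ arrives it takes spot $i$ by the MVP rule. Your version is simply more explicit in unpacking the definitions and in invoking the non-cascading clause; the paper compresses both claims into two sentences.
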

\begin{proof}
By Definition~\ref{prefSpots}, any car $\pi_u > \pi_i$ cannot prefer spot $i$. If any car $\pi_v < \pi_i$ parked in spot $i$, then car $\pi_i$ bumps car $\pi_v$ to another spot further down the street. Hence car $\pi_i$ parks in spot~$i$.
\end{proof}

Now we prove the converse of Proposition~\ref{inequality}, which we state using the contrapositive as follows. 

\begin{proposition}\label{equality}
If $\pi \in \Sym_n$ avoids $(3,2,1)$ and $(3,4,1,2)$, then $|\OMVPn^{-1}(\pi)|=\prod_{j=1}^{n}|\Omega_{j}(\pi)|.$
\end{proposition}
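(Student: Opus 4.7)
By Theorem~\ref{thm:upper bound} we already have $|\OMVPn^{-1}(\pi)| \leq \prod_{j=1}^n |\Omega_j(\pi)|$, so the plan is to establish the reverse inequality by exhibiting an injection from $\prod_{j=1}^n \Omega_j(\pi)$ into $\OMVPn^{-1}(\pi)$. Given $x = (x_1, \ldots, x_n) \in \prod_{j=1}^n \Omega_j(\pi)$, I would define $\Phi(x) = \alpha$ to be the preference vector in which car $\pi_j$ prefers spot $x_j$ for each $j \in [n]$. Since distinct tuples determine distinct preference vectors, $\Phi$ is clearly injective, and the task reduces to verifying that the MVP outcome on $\alpha = \Phi(x)$ equals $\pi$ for every such $x$.

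To verify this, I would argue spot by spot that the final occupant of spot $i$ is $\pi_i$. Let $P_i \coloneqq \{c \in [n] : a_c = i\}$ denote the cars preferring $i$. A routine check of the MVP rule (using that no car can dislodge an occupant from $i$ except a later-arriving car preferring $i$) shows that whenever $P_i \neq \emptyset$, the final occupant of spot $i$ is $\max P_i$. From the definition of $\Omega_j(\pi)$, any member of $P_i$ other than $\pi_i$ must arise from a position $j > i$ with $\pi_j < \pi_i$. Hence, in the case $x_i = i$, we have $\pi_i \in P_i$ and $\pi_i$ is its largest-index element, so car $\pi_i$ lands at spot $i$. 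If instead $x_i = k \neq i$ (forcing $k < i$ and $\pi_k > \pi_i$) but $P_i \neq \emptyset$, then any $\pi_j \in P_i$ yields positions $k < i < j$ with $\pi_k > \pi_i > \pi_j$, a $(3,2,1)$ subpermutation contradicting the hypothesis. Thus the only remaining case is $x_i \neq i$ together with $P_i = \emptyset$.

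In this last case, spot $i$ must be filled by a bumped car, and the claim is that this bumped car is precisely $\pi_i$. Car $\pi_i$ is initially placed at spot $k = x_i$ and then travels through a bumping chain $k = s_0 < s_1 < \cdots < s_m$, where $s_{\ell+1}$ is the smallest unoccupied spot larger than $s_\ell$ at the moment $\pi_i$ is bumped from $s_\ell$; the goal is $s_m = i$. Here the avoidance of $(3,4,1,2)$ is the essential ingredient: the analysis in the proof of Proposition~\ref{inequality} identifies $(3,4,1,2)$ as precisely the local obstruction that would force car $\pi_i$ to overshoot spot $i$, so in its absence the spots strictly between each $s_\ell$ and the next step of the chain are secured on time by the cars $\pi$ assigns to them, keeping $\pi_i$ on track to reach $i$ exactly. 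The main obstacle will be setting up a tight inductive invariant on the arrival time that simultaneously tracks which spots are occupied, by which car, and which cars are still in transit to their $\pi$-assigned spots, and then verifying that this invariant is preserved at each arrival event using the two pattern-avoidance hypotheses. Once such an invariant is in place, combining Cases $x_i = i$ and $x_i \neq i$ yields the desired equality $|\OMVPn^{-1}(\pi)| = \prod_{j=1}^n |\Omega_j(\pi)|$.
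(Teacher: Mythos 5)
Your reduction is sound and matches the paper's framing of \emph{preference independence}: by Theorem~\ref{thm:upper bound} the fiber injects into $\prod_{j=1}^n\Omega_j(\pi)$, so it suffices to show that every tuple of allowed preferences produces outcome $\pi$. Your handling of the spots with $P_i\neq\emptyset$ is also correct and clean: under the MVP rule the final occupant of a preferred spot is indeed the latest car preferring it, and Definition~\ref{prefSpots} together with $(3,2,1)$-avoidance forces $\max P_i=\pi_i$ (and rules out $x_i\neq i$ with $P_i\neq\emptyset$). Up to this point you are essentially running the paper's argument in a slightly different order.

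The gap is in the remaining case, which is where all the real work lives. You assert that car $\pi_i$, initially placed at $x_i<i$, travels through a bumping chain $s_0<s_1<\cdots<s_m$ ending exactly at $s_m=i$, and that $(3,4,1,2)$-avoidance is ``the essential ingredient,'' but you never prove either that the chain cannot overshoot spot $i$ or that no \emph{other} bumped car reaches spot $i$ first; you explicitly defer both to an inductive invariant you do not construct. The appeal to Proposition~\ref{inequality} cannot fill this hole: that proposition only exhibits a single bad preference tuple when a forbidden pattern is \emph{present}; it provides no positive control over the timing of occupations when the patterns are absent. The paper's proof spends nearly all of its length precisely here: it inducts on the prefix $\pi_1,\ldots,\pi_k$, splits on the number $m$ of indices $j\le k$ with $\pi_j>\pi_{k+1}$, uses $(3,2,1)$-avoidance to show the cars $\pi_{j_1},\ldots,\pi_{j_m}$ appear in increasing order, uses $(3,4,1,2)$-avoidance to force all of $\pi_{k+2},\ldots,\pi_n$ to exceed $\pi_{j_{m-1}}$ and hence to be unable to prefer any spot left of $j_m$ other than $j_m$ itself, and then separately verifies that the cars preferring $j_m$ all get bumped past spot $k+1$ so that no car other than $\pi_{k+1}$ can land there. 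Without an argument of this kind (or your promised invariant, actually stated and verified), the proof is incomplete at its central step.
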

\begin{proof}
Let $\pi = (\pi_1,\pi_2, \ldots,\pi_n)$. It suffices to show that the preference of one car does not influence how another car parks in $\pi$. 
This way, we demonstrate that $\pi$ satisfies \textit{preference independence}. 
We proceed by induction on $k$, the number of cars $\pi_1,\pi_2 ,\ldots, \pi_k$ starting from the left of $\pi$.

In the base case $k=1$, note that car $\pi_1$ can and must prefer spot 1 in order to park there. 
Now assume that up to some $k < n$, the cars $\pi_1,\pi_2,\pi_3, \ldots, \pi_k$ can all prefer independently.
We show that car $\pi_{k+1}$ also prefers independently. We consider the following cases:
\begin{enumerate}[label=\roman*.]
\setlength{\itemindent}{1em}
    \item[\textbf{Case 1:}] There does not exist $j \leq k $ such that $\pi_j > \pi_{k+1} $. 
    Hence the cars parked to the left of car $\pi_{k+1}$, which is parked in spot $k+1$, were in the queue before it, i.e.~$\pi_i < \pi_{k+1}$ for all  $1 \leq i \leq k$. 
     By Definition~\ref{prefSpots}, then, $\pi_{k+1}$ only prefers spot $k+1$. It follows that car $\pi_{k+1}$ parks in spot $k+1$ as desired, by Lemma \ref{trivial pref}. 
     \item[\textbf{Case 2:}]
     There exists exactly one $j \leq k $ such that $\pi_j > \pi_{k+1} $.
     In this case, for every $i\in[n]$ we plot $(i,\pi_i)$ on the lattice $[n]\times[n]$ and note that the structure of the permutation $\pi$ is illustrated in Figure~\ref{fig:case2}.
     \begin{figure}[H]
         \centering
\resizebox{3in}{!}{
         \tikzset{every picture/.style={line width=0.75pt}} %set default line width to 0.75pt        

\begin{tikzpicture}[x=0.75pt,y=0.75pt,yscale=-1,xscale=1]
%uncomment if require: \path (0,415); %set diagram left start at 0, and has height of 415

%Shape: Axis 2D [id:dp2578736461960899] 
\draw  (49,301.2) -- (442,301.2)(88.3,6) -- (88.3,334) (435,296.2) -- (442,301.2) -- (435,306.2) (83.3,13) -- (88.3,6) -- (93.3,13)  ;
%Straight Lines [id:da15236113222947034] 
\draw    (249.5,27) -- (249,302) ;
%Flowchart: Connector [id:dp0787540708315323] 
\draw  [fill={rgb, 255:red, 0; green, 0; blue, 0 }  ,fill opacity=1 ] (332,160) .. controls (332,157.24) and (334.24,155) .. (337,155) .. controls (339.76,155) and (342,157.24) .. (342,160) .. controls (342,162.76) and (339.76,165) .. (337,165) .. controls (334.24,165) and (332,162.76) .. (332,160) -- cycle ;
%Flowchart: Connector [id:dp843217874275228] 
\draw  [fill={rgb, 255:red, 0; green, 0; blue, 0 }  ,fill opacity=1 ] (244,76) .. controls (244,73.24) and (246.24,71) .. (249,71) .. controls (251.76,71) and (254,73.24) .. (254,76) .. controls (254,78.76) and (251.76,81) .. (249,81) .. controls (246.24,81) and (244,78.76) .. (244,76) -- cycle ;
%Shape: Rectangle [id:dp3602598271841273] 
\draw  [fill={rgb, 255:red, 0; green, 0; blue, 0 }  ,fill opacity=0.2 ] (88,27) -- (337,27) -- (337,159) -- (88,159) -- cycle ;
%Shape: Rectangle [id:dp6267552155515528] 
\draw  [fill={rgb, 255:red, 0; green, 0; blue, 0 }  ,fill opacity=0.2 ] (339,160) -- (418,160) -- (418,301) -- (339,301) -- cycle ;
%Shape: Rectangle [id:dp6093124529092961] 
\draw   (337,28) -- (418,28) -- (418,160) -- (337,160) -- cycle ;

% Text Node
\draw (99,309) node [anchor=north west][inner sep=0.75pt]    {${1\ \ \ \ 2\ \ \ \ 3\ \ \ \ \ \ \ \cdots \ \ \ \ \ \  j\ \ \ \ \ \ \cdots \ \ \ \ k+1\ \ \ \ \cdots \ \ \ n\ }$};
% Text Node
\draw (150,223) node [anchor=north west][inner sep=0.75pt]    {Region\ $ 3$};
% Text Node
\draw (150,90) node [anchor=north west][inner sep=0.75pt]    {Region\ $1$};
% Text Node
\draw (350,223) node [anchor=north west][inner sep=0.75pt]    {Region\ $ 2$};
% Text Node
\draw (350,90) node [anchor=north west][inner sep=0.75pt]    {Region\ $ 4$};
% Text Node
\draw (255,47) node [anchor=north west][inner sep=0.75pt]    {$\pi _{j}$};
% Text Node
\draw (341,134) node [anchor=north west][inner sep=0.75pt]    {$\pi _{k+1}$};
% Text Node
\draw (40,18) node [anchor=north west][inner sep=0.75pt]    {$ \begin{array}{l}
\ \ \ \ \ n\\
\ \ \ \ \\
\\
\\
\\
\\
\ \ \ \ \vdots \\
\\
\\
\\
\\
\ \ \ \ \\
\\\\
\ \ \ \ \ 1
\end{array}$};

\end{tikzpicture}
         }
         \caption{Illustrating $\pi$  in Case 2. Note that the shaded regions contains no points $(i,\pi)$ other than the ones included in the graphic.
         }
         \label{fig:case2}
     \end{figure}
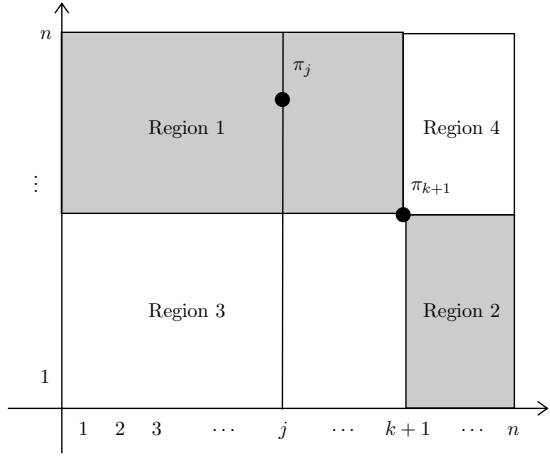
     We begin by first noting that by Definition~\ref{prefSpots}, car $\pi_{k+1}$ prefers spots $\{j,k+1\}$. We will show that car $\pi_{k+1}$ can prefer these spots independently.
     
     First we remark that, by assumption in this case, there exists only one index $j$ where $\pi_j > \pi_{k+1}$. 
     This implies that cars $\pi_1,\ldots \pi_{j-1},\pi_{j+1}, \ldots, \pi_k < \pi_{k+1}$, park in spots $\{1,\ldots, k\}\setminus\{j\}$ and hence the corresponding points $(i,\pi_i)$ for $i\in\{1,\ldots,k\}\setminus\{j\}$ all lie in \texttt{Region 3} of Figure~\ref{fig:case2}. This implies that \texttt{Region 1} in Figure~\ref{fig:case2} contains the point $(j,\pi_j)$ and is empty otherwise.
     Next, since $\pi$ avoids the pattern $(3,2,1)$,  cars $\pi_{k+2}, \ldots, \pi_n > \pi_{k+1}$ all must park  in spots $\{k+2,\ldots, n\}$ and hence the corresponding points $(i,\pi_i)$ for $i\in\{k+2,\ldots,n\}$ all lie in \texttt{Region 4} of Figure~\ref{fig:case2}. This implies that \texttt{Region 2} of Figure~\ref{fig:case2} contains  the point $(k+1,\pi_{k+1})$ and is empty otherwise.
     
     Now suppose car $\pi_{k+1}$ preferred spot $j$.  By our inductive hypothesis, we know that cars $\pi_1, \pi_2,\ldots, \pi_{j-1}, \pi_{j+1}, \ldots, \pi_k$ all park independently between spots $1,\ldots,k$. 
     When car $\pi_{k+1}$ enters the street, it parks in spot $j$, bumping any other car that may have parked there to a spot $m < k+1$. 
     Then car $\pi_{k+1}$ gets bumped by car $\pi_j$ or car $\pi_{j'}$ afterwards to spot $k+1$, where $\pi_{k+1} < \pi_{j'} < \pi_j$. We know that there are no empty spots between $j$ and $k+1$ since by the inductive hypothesis, all the cars $\pi_1, \ldots, \pi_k$ have now already parked independently within those spots. 
     Therefore car $\pi_{k+1}$ will park in the first available spot, which is spot $k+1$. 
     Note that car $\pi_{k+1}$ will not get bumped out of this spot again since $\pi_{k+2}, \ldots, \pi_n > \pi_{k+1}$ and by Definition~\ref{prefSpots}, those cars will never prefer spot $k+1$.
     
     On the other hand, suppose car $\pi_{k+1}$ preferred spot $k+1$. Then car $\pi_{k+1}$ parks in spot $k+1$ as desired, by Lemma \ref{trivial pref}.
     
     We have therefore shown that car $\pi_{k+1}$, given all of its preferences, parks independently under the assumptions of Case 2.

  \item[\textbf{Case 3:}]
     There exist $m\geq 2$ and indices $j_1, j_2, \ldots, j_m$,  where $j_1 < j_2 < \cdots < j_m < k+1$, such that $\pi_{j_i} > \pi_{k+1}$ for all $i\in[m]$.    
     
     First, notice that if the cars $\pi_{j_1}, \ldots, \pi_{j_m}$ are not in increasing order, then that implies that there exists two indices $j_a,j_b$ where $1 < a < b \leq m$ 
     and $\pi_{j_a} > \pi_{j_b}$. It follows then that there exists a $(3,2,1)$ pattern, giving rise to a contradiction. 
     Figure~\ref{noOrder} illustrates this case.\newline

     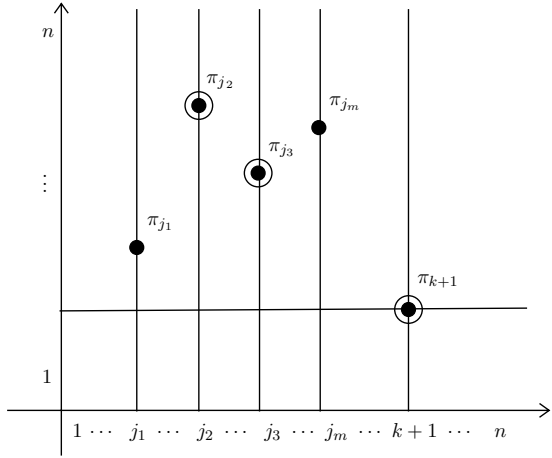
\begin{figure}[H]
         \centering
\resizebox{3in}{!}{

\tikzset{every picture/.style={line width=0.75pt}} %set default line width to 0.75pt        

\begin{tikzpicture}[x=0.75pt,y=0.75pt,yscale=-1,xscale=1]
%uncomment if require: \path (0,415); %set diagram left start at 0, and has height of 415

%Shape: Axis 2D [id:dp5045254239655744] 
\draw  (49,301.2) -- (442,301.2)(88.3,6) -- (88.3,334) (435,296.2) -- (442,301.2) -- (435,306.2) (83.3,13) -- (88.3,6) -- (93.3,13)  ;
%Flowchart: Connector [id:dp39921839884182886] 
\draw  [fill={rgb, 255:red, 0; green, 0; blue, 0 }  ,fill opacity=1 ] (335,228) .. controls (335,225.24) and (337.24,223) .. (340,223) .. controls (342.76,223) and (345,225.24) .. (345,228) .. controls (345,230.76) and (342.76,233) .. (340,233) .. controls (337.24,233) and (335,230.76) .. (335,228) -- cycle ;
%Flowchart: Connector [id:dp718697674950395] 
\draw  [fill={rgb, 255:red, 0; green, 0; blue, 0 }  ,fill opacity=1 ] (138,183) .. controls (138,180.24) and (140.24,178) .. (143,178) .. controls (145.76,178) and (148,180.24) .. (148,183) .. controls (148,185.76) and (145.76,188) .. (143,188) .. controls (140.24,188) and (138,185.76) .. (138,183) -- cycle ;
%Straight Lines [id:da6601590493773836] 
\draw    (143,10) -- (143,302) ;
%Straight Lines [id:da988684715052803] 
\draw    (188,10) -- (188,302) ;
%Straight Lines [id:da4217646418335982] 
\draw    (232,10) -- (232,302) ;
%Straight Lines [id:da0416115734533794] 
\draw    (276,10) -- (276,302) ;
%Flowchart: Connector [id:dp8675131193177048] 
\draw  [fill={rgb, 255:red, 0; green, 0; blue, 0 }  ,fill opacity=1 ] (183,80) .. controls (183,77.24) and (185.24,75) .. (188,75) .. controls (190.76,75) and (193,77.24) .. (193,80) .. controls (193,82.76) and (190.76,85) .. (188,85) .. controls (185.24,85) and (183,82.76) .. (183,80) -- cycle ;
%Flowchart: Connector [id:dp7436544424413184] 
\draw  [fill={rgb, 255:red, 0; green, 0; blue, 0 }  ,fill opacity=1 ] (226,129) .. controls (226,126.24) and (228.24,124) .. (231,124) .. controls (233.76,124) and (236,126.24) .. (236,129) .. controls (236,131.76) and (233.76,134) .. (231,134) .. controls (228.24,134) and (226,131.76) .. (226,129) -- cycle ;
%Flowchart: Connector [id:dp8508049195039643] 
\draw  [fill={rgb, 255:red, 0; green, 0; blue, 0 }  ,fill opacity=1 ] (270,96) .. controls (270,93.24) and (272.24,91) .. (275,91) .. controls (277.76,91) and (280,93.24) .. (280,96) .. controls (280,98.76) and (277.76,101) .. (275,101) .. controls (272.24,101) and (270,98.76) .. (270,96) -- cycle ;
%Straight Lines [id:da969039485179241] 
\draw    (87,229) -- (426,227) ;
%Shape: Circle [id:dp10480953355179012] 
\draw   (178,80) .. controls (178,74.48) and (182.48,70) .. (188,70) .. controls (193.52,70) and (198,74.48) .. (198,80) .. controls (198,85.52) and (193.52,90) .. (188,90) .. controls (182.48,90) and (178,85.52) .. (178,80) -- cycle ;
%Shape: Circle [id:dp9859353514920259] 
\draw   (221,129) .. controls (221,123.48) and (225.48,119) .. (231,119) .. controls (236.52,119) and (241,123.48) .. (241,129) .. controls (241,134.52) and (236.52,139) .. (231,139) .. controls (225.48,139) and (221,134.52) .. (221,129) -- cycle ;
%Shape: Circle [id:dp31085417692591166] 
\draw   (330,228) .. controls (330,222.48) and (334.48,218) .. (340,218) .. controls (345.52,218) and (350,222.48) .. (350,228) .. controls (350,233.52) and (345.52,238) .. (340,238) .. controls (334.48,238) and (330,233.52) .. (330,228) -- cycle ;
%Straight Lines [id:da9962239272179586] 
\draw    (340,10) -- (340,302) ;

% Text Node
\draw (95,309) node [anchor=north west][inner sep=0.75pt]    {$1\ {\cdots \ \ j_{1}\ \cdots \ \ j_{2}\ \cdots\ \ j_{3} \ \cdots\ j_{m} \ \cdots\  k+1\ \cdots \ \ \ n\ }$};
% Text Node
\draw (149,159) node [anchor=north west][inner sep=0.75pt]    {$\pi _{j_{1}}$};
% Text Node
\draw (345,201) node [anchor=north west][inner sep=0.75pt]    {$\pi _{k+1}$};
% Text Node
\draw (42,17) node [anchor=north west][inner sep=0.75pt]    {$ \begin{array}{l}
\ \ \ \ \ n\\
% \ \ \ \ \ \tiny{\vdots }\\
%  \ \ \ \pi_{j_2}

% \\
\\
\\
\\
 
 \\
 
 \\
 \ \ \ \ \ {\vdots}\\\\
 \\
 \\
 \\\\
 \\\\
%  \ \ \ \pi_{j_m}

% \\
% \ \ \ \ \ \tiny{\vdots} \\
%  \ \ \ \pi_{j_3}

% \\
% \ \ \ \ \ \tiny{\vdots }\\
%  \ \ \ \pi_{j_1}
% \\
% \ \ \ \ \ \tiny{\vdots}
% \\\pi_{k+1}
% \\
% \ \ \ \ \ \tiny{\vdots}
% \\\\
\ \ \ \ \ 1
\end{array}$};
% Text Node
\draw (192,56) node [anchor=north west][inner sep=0.75pt]    {$\pi _{j}{}_{_{2}}$};
% Text Node
\draw (237,105) node [anchor=north west][inner sep=0.75pt]    {$\pi _{j_{3}}$};
% Text Node
\draw (281,72) node [anchor=north west][inner sep=0.75pt]    {$\pi _{j_{m}}$};

\end{tikzpicture}
         }
         \caption{Image showing the case where the cars $\pi_{j_1}, \ldots, \pi_{j_m}$ are not in increasing order. The three circled points illustrate a sample $(3,2,1)$ pattern arising in $\pi$. 
        Notice that any such instance of these points not in increasing order would mean there is a $(3,2,1)$ pattern in $\pi$.
         }
         \label{noOrder}
     \end{figure}
    
     Therefore, since $\pi$ avoids $(3,2,1)$ we have established that $\pi_{j_1}, \ldots, \pi_{j_m}$ are in increasing order. 
     Also $\pi$ must avoid the pattern $(3,4,1,2)$. 
     For this to be true it must be that the cars $\pi_{k+2}, \ldots, \pi_n > \pi_{j_{m-1}}$, and hence the points $(a,\pi_a)$, with $k+2\leq a\leq n$, all lie in $\mathtt{Region\,3}$ of Figure~\ref{incOrder}.

We now consider how car $\pi_{k+1}$ parks in this case. By Definition~\ref{prefSpots}, we know that car $\pi_{k+1}$ prefers spots $\{j_1,j_2,\ldots, j_m,k+1\}$. 
Suppose car $\pi_{k+1}$ prefers spot $j_i$ for some $i\in[m]$. 
Since car $\pi_{j_i}$ must park in spot $k+1$, car $\pi_{k+1}$ must be bumped from spot $j_i$ by car $\pi_{j_i}$ or some car $\pi_{j'_i}$ satisfying $\pi_{k+1} < \pi_{j'_i} < \pi_{j_i}$. 
Once bumped, car $\pi_{k+1}$ cannot park in spot $u$ if $\pi_u < \pi_{k+1}$, otherwise car $\pi_u$ will park incorrectly due to spot $u$ being occupied by a car arriving after it, contradicting the inductive hypothesis. 
Thus, after being bumped, car $\pi_{k+1}$ must park in some spot $j_t$ where $ i < t \leq m$, where it will then be bumped by car $\pi_{j_t}$ or by some car $\pi_{j'_t}$ satisfying $\pi_{k+1} < \pi_{j'_t} < \pi_{j_t}$. 
This process is repeated until car $\pi_{k+1}$ is bumped past spot $j_m$. It remains to show that no car other than $\pi_{k+1}$ parks in spot $k+1$.

By the inductive hypothesis, none of the cars $\pi_1, \pi_2, \ldots, \pi_k$ can park in spot $k+1$. 
Moreover, the cars $\pi_{k+2}, \pi_{k+3}, \ldots, \pi_n$ are all in $\mathtt{Region\,3}$ of Figure~\ref{incOrder}, so they cannot prefer spot $k+1$, and $j_m$ is the only spot they can prefer which is left of $k+1$. To show that these cars cannot park in spot $k+1$ either, suppose that some of them prefer spot $j_m$. For $a_1, a_2, \ldots, a_u \in \{k+2,k+3,\ldots,n\}$, let $\pi_{a_1} < \pi_{a_2} < \ldots < \pi_{a_u}$ be the cars preferring spot $j_m$. 
By the time car $\pi_{a_1}$ enters the parking lot, car $\pi_{k+1}$ will have already been bumped past spot $j_{m-1}$, since $\pi_{a_1} > \pi_{j_{m-1}}$. 
If car $\pi_{k+1}$ parked in spot $k+1$, it parks there and we are done. 
If car $\pi_{k+1}$ parked in spot $j_m$, then it gets bumped by car $\pi_{a_1}$ and again parks in spot $k+1$; all the cars $\pi_{a_l}$ will get bumped past spot $k+1$. Hence none of the cars $\pi_{a_i}$ park in spot $k+1$ for all $i\in[u]$, as claimed. As these were the only possible cars that could potentially park in spot $k+1$, we have established that no car other than $\pi_{k+1}$ parks in spot $k+1$, as desired.

We again note that car $\pi_{k+1}$ preferring spot $k+1$ is the trivial case. Thus we conclude that car $\pi_{k+1}$ prefers independently under this case. Thereby completing the proof of this case.
     \begin{figure}[H]
         \centering
\resizebox{3in}{!}{

\tikzset{every picture/.style={line width=0.75pt}} %set default line width to 0.75pt        

\begin{tikzpicture}[x=0.75pt,y=0.75pt,yscale=-1,xscale=1]
%uncomment if require: \path (0,415); %set diagram left start at 0, and has height of 415

%Shape: Rectangle [id:dp2601518409580297] 
\draw  [fill={rgb, 255:red, 190; green, 185; blue, 185 }  ,fill opacity=1 ] (354,115) -- (426,115) -- (426,227) -- (354,227) -- cycle ;
%Straight Lines [id:da34845999809351247] 
\draw    (354,10) -- (354,302) ;
%Shape: Axis 2D [id:dp10137445025605485] 
\draw  (49,301.2) -- (442,301.2)(88.3,6) -- (88.3,334) (435,296.2) -- (442,301.2) -- (435,306.2) (83.3,13) -- (88.3,6) -- (93.3,13)  ;
%Flowchart: Connector [id:dp9519914781713443] 
\draw  [fill={rgb, 255:red, 0; green, 0; blue, 0 }  ,fill opacity=1 ] (349,228) .. controls (349,225.24) and (351.24,223) .. (354,223) .. controls (356.76,223) and (359,225.24) .. (359,228) .. controls (359,230.76) and (356.76,233) .. (354,233) .. controls (351.24,233) and (349,230.76) .. (349,228) -- cycle ;
%Flowchart: Connector [id:dp19277088648275997] 
\draw  [fill={rgb, 255:red, 0; green, 0; blue, 0 }  ,fill opacity=1 ] (138,183) .. controls (138,180.24) and (140.24,178) .. (143,178) .. controls (145.76,178) and (148,180.24) .. (148,183) .. controls (148,185.76) and (145.76,188) .. (143,188) .. controls (140.24,188) and (138,185.76) .. (138,183) -- cycle ;
%Straight Lines [id:da8305040114176477] 
\draw    (143,10) -- (143,302) ;
%Straight Lines [id:da9758237315618791] 
\draw    (188,10) -- (188,302) ;
%Straight Lines [id:da6123060551392492] 
\draw    (238,10) -- (238,302) ;
%Straight Lines [id:da18886186828355866] 
\draw    (290,10) -- (290,302) ;
%Flowchart: Connector [id:dp748863336567992] 
\draw  [fill={rgb, 255:red, 0; green, 0; blue, 0 }  ,fill opacity=1 ] (183,147) .. controls (183,144.24) and (185.24,142) .. (188,142) .. controls (190.76,142) and (193,144.24) .. (193,147) .. controls (193,149.76) and (190.76,152) .. (188,152) .. controls (185.24,152) and (183,149.76) .. (183,147) -- cycle ;
%Flowchart: Connector [id:dp39225373961156207] 
\draw  [fill={rgb, 255:red, 0; green, 0; blue, 0 }  ,fill opacity=1 ] (232,116) .. controls (232,113.24) and (234.24,111) .. (237,111) .. controls (239.76,111) and (242,113.24) .. (242,116) .. controls (242,118.76) and (239.76,121) .. (237,121) .. controls (234.24,121) and (232,118.76) .. (232,116) -- cycle ;
%Flowchart: Connector [id:dp6113417787023882] 
\draw  [fill={rgb, 255:red, 0; green, 0; blue, 0 }  ,fill opacity=1 ] (284,89) .. controls (284,86.24) and (286.24,84) .. (289,84) .. controls (291.76,84) and (294,86.24) .. (294,89) .. controls (294,91.76) and (291.76,94) .. (289,94) .. controls (286.24,94) and (284,91.76) .. (284,89) -- cycle ;
%Straight Lines [id:da08463905023390095] 
\draw    (87,229) -- (426,227) ;
%Straight Lines [id:da938235010903678] 
\draw    (88,184) -- (427,182) ;
%Straight Lines [id:da7302896665705999] 
\draw    (88,148) -- (427,146) ;
%Straight Lines [id:da307438384840403] 
\draw    (88,117) -- (427,115) ;

% Text Node
\draw (99,309) node [anchor=north west][inner sep=0.75pt]    {$1\ {\cdots \ j_{1} \cdots \ \ j_{2} \ \cdots \ j_{m}{}_{-1} \ \cdots\ j_{m} \ \cdots \ k+1\ \cdots \ \ \ n\ }$};
% Text Node
\draw (149,159) node [anchor=north west][inner sep=0.75pt]    {$\pi _{j_{1}}$};
% Text Node
\draw (359,201) node [anchor=north west][inner sep=0.75pt]    {$\pi _{k+1}$};
% Text Node
\draw (42,17) node [anchor=north west][inner sep=0.75pt]    {$ \begin{array}{l}
\ \ \ \ \ n\\
\ \ \\
\\
\\\\
\\
\\
\ \ \ \ \ \vdots \\
\ \\
\\
\\
\\
\\
\\
\ \ \ \ \ 1
\end{array}$};
% Text Node
\draw (192,123) node [anchor=north west][inner sep=0.75pt]    {$\pi _{j}{}_{_{2}}$};
% Text Node
\draw (243,92) node [anchor=north west][inner sep=0.75pt]    {$\pi _{j_{m-1}}$};
% Text Node
\draw (295,65) node [anchor=north west][inner sep=0.75pt]    {$\pi _{j_{m}}$};
% Text Node
\draw (360,159) node [anchor=north west][inner sep=0.75pt]   [align=left] {Region 2};
% Text Node
\draw (359,58) node [anchor=north west][inner sep=0.75pt]   [align=left] {Region 3};
% Text Node
\draw (189,256) node [anchor=north west][inner sep=0.75pt]   [align=left] {\begin{minipage}[lt]{43.55pt}\setlength\topsep{0pt}
\begin{center}
Region 1
\end{center}

\end{minipage}};

\end{tikzpicture}
         }
         \caption{Structure of case 4. A shaded region implies that that region can have no points $(i,\pi_i)$ other than the ones included.}
         \label{incOrder}
     \end{figure}
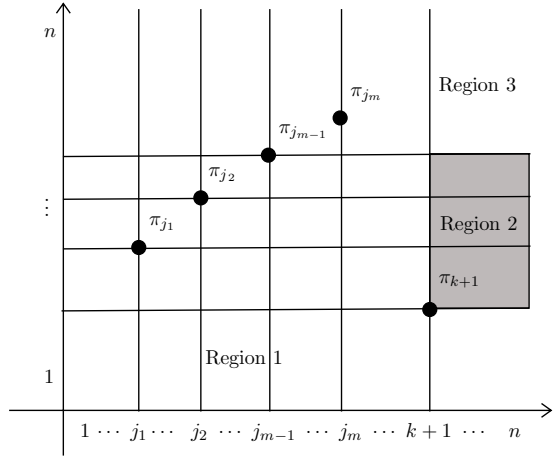
\end{enumerate}
    Collectively, we have now shown that in each case, car $\pi_{k+1}$ prefers independently when the cars $\pi_1,\pi_2,\pi_3, \ldots, \pi_k$ all prefer independently. This completes the inductive step and hence the proof.
\end{proof}

Proposition~\ref{inequality} and Proposition~\ref{equality} imply the following result.

\begin{theorem}\label{thm:equality}
Let $n\in\mathbb{N}$. Then 
 \[|\OMVPn^{-1}(\pi)|=\prod_{j=1}^{n}|\Omega_{j}(\pi)|\] if and only if $\pi$ avoids $(3,2,1)$ and $(3,4,1,2)$.
\end{theorem}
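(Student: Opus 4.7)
The plan is to obtain Theorem~\ref{thm:equality} as an immediate logical consequence of the two propositions that precede it, together with the universal upper bound from Theorem~\ref{thm:upper bound}. Since both directions of the biconditional are fully captured in those preceding results, the proof reduces to a short synthesis with no new technical content required.

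First, I would note that Theorem~\ref{thm:upper bound} already guarantees the weak inequality $|\OMVPn^{-1}(\pi)| \leq \prod_{j=1}^{n}|\Omega_{j}(\pi)|$ for every $\pi \in \Sym_n$, so the content of Theorem~\ref{thm:equality} is a characterization of precisely when this weak inequality is an equality. With this framing in place, I would treat the two directions of the biconditional separately.

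For the ``if'' direction, I would assume that $\pi \in \Sym_n$ avoids both $(3,2,1)$ and $(3,4,1,2)$ and directly invoke Proposition~\ref{equality}, which yields the equality $|\OMVPn^{-1}(\pi)| = \prod_{j=1}^{n}|\Omega_{j}(\pi)|$. For the ``only if'' direction, I would argue by contrapositive: assume $\pi$ contains at least one of the patterns $(3,2,1)$ or $(3,4,1,2)$; then Proposition~\ref{inequality} yields the strict inequality $|\OMVPn^{-1}(\pi)| < \prod_{j=1}^{n}|\Omega_{j}(\pi)|$, so the equality in the statement must fail. Combining the two cases yields the desired equivalence.

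I do not anticipate any real obstacle, since the heavy lifting, namely the pattern-avoidance analysis driving Proposition~\ref{equality} and the pattern-containment constructions in Proposition~\ref{inequality}, has already been completed; the theorem serves as a clean packaging of their joint consequence. The only care needed is to match each direction of the biconditional with the correct proposition and to observe that the two hypotheses (avoiding both patterns versus containing at least one of them) are logical negations of each other, so that together the propositions cover every $\pi \in \Sym_n$.
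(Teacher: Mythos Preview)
Your proposal is correct and matches the paper's own approach exactly: the paper simply states that Proposition~\ref{inequality} and Proposition~\ref{equality} together imply Theorem~\ref{thm:equality}, with no additional argument. Your write-up is just a more explicit unpacking of that same synthesis.
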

  
\begin{remark}\label{remark:AlexWoo}
The set of permutations that avoid the patterns $(3,2,1)$ and $(3,4,1,2)$ were studied by Tenner in the context of the Boolean algebra, the set of subsets of $[n]$ ordered by inclusion \cite[Theorem 4.3]{Tenner}. 
Moreover, the number of permutations  in $\Sym_n$
that avoid $(3,2,1)$ and $(3,4,1,2)$ 
permutations is
$F_{2n-1}$, where $F_k$
denotes the $k$th
Fibonacci number\footnote{OEIS \textcolor{blue}{\href{https://oeis.org/A000045}{A000045}}.}, for more details see \cite{Fan,West}. More recently, Lee, Masuda, and Park provide a summary of the relations of this result to algebraic geometry \cite[Theorem 1.1]{LMP}.  
\end{remark}

\subsection{Applications: $k$-cycles}
In this section we consider $k$-cycles with decreasing or increasing consecutive entries. 
We begin by recalling that a $k$-cycle is a permutation consisting of a single cycle of length~$k$. 
We first consider \emph{increasing k-cycles}.
\begin{definition}
Let $\pi \in \Sym_n$.
We say $\pi$ is an \emph{increasing $k$-cycle} if, in cycle notation\footnote{Note that we use ``$\langle$'' and ``$\rangle$'' to denote the cycle notation of a permutation since we use parenthesis to denote the one-line notation.}, it has the form $\langle a,a+1,\ldots, a+k-1\rangle$, for some $a\in[n]$.
\end{definition}

In one-line notation, these permutations can be described as having entries in increasing order and in which one entry $a\in[n]$ has been moved right by $k-1$ spots. 
Namely, in one-line notation an increasing $k$-cycle has the form:
\begin{align}
\pi=(1,2,\ldots,a-1,\hat{a},a+1,\ldots,a+k-1,a,a+k,\ldots,n),\label{eq:move a right}
\end{align}
where we shift the smaller entry $a$ by $k-1$ indices to the right and each of the entries $a+1,a+2,\ldots,a+k-1$ to the left by one index, while all other entries remain in place. 
Informally, we think of this as just ``moving a small number to the right.'' Note that in \eqref{eq:move a right} we write $\hat{a}$ to illustrate that we have removed this instance of $a$ from the permutation.

\begin{example}\label{ex:kcycles}
The permutation $\pi =\langle2,3,4\rangle=(1,3,4,2,5,6)$ is an increasing $3$-cycle with $a=2$, whereas $\tau =\langle1,2,3,4,5\rangle= (2,3,4,5,1)$ is an increasing $5$-cycle with $a=1$.
\end{example}

\begin{lemma}\label{lem:kcycles}
If $\pi=\langle a, a+1,\ldots, a+k-1\rangle=(1,2,\ldots,a-1,\hat{a},a+1,\ldots,a+k-1,a,a+k,\ldots,n)$ is an increasing $k$-cycle, then
\[|\OMVPn^{-1}(\pi)|=k.\]
\end{lemma}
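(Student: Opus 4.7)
My plan is to apply Theorem~\ref{thm:equality} after verifying pattern avoidance, and then compute the product $\prod_{j=1}^{n} |\Omega_j(\pi)|$ by a straightforward case analysis on $j$.

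First, I would show that $\pi = (1, 2, \ldots, a-1, a+1, a+2, \ldots, a+k-1, a, a+k, \ldots, n)$ avoids both $(3,2,1)$ and $(3,4,1,2)$. The key observation is that $\pi$ has a single descent: the entry $a$ at position $a+k-1$ is less than its predecessor $a+k-1$, and everything else is strictly increasing. For a $(3,2,1)$ pattern one would need three positions with strictly decreasing values, but any three chosen values from $\pi$ contain at most one ``drop'' (the one landing on $a$), so no three-term decreasing subsequence exists. For a $(3,4,1,2)$ pattern, the small value (playing the role of ``1'') would have to be $a$ at position $a+k-1$; then the value playing the role of ``2'' lies in $\{a+k,\ldots,n\}$, and the values playing ``3'' and ``4'' must appear at positions strictly less than $a+k-1$, where all values are $\leq a+k-1$, yielding a contradiction. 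Hence $\pi$ achieves preference independence, so $|\OMVPn^{-1}(\pi)| = \prod_{j=1}^{n} |\Omega_j(\pi)|$.

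Second, I would compute $|\Omega_j(\pi)|$ by splitting on the four natural ranges of $j$. For $1 \leq j \leq a-1$ we have $\pi_j = j$ and the set $\{\pi_1,\ldots,\pi_{j-1}\} = \{1,\ldots,j-1\}$ contains no car later than $\pi_j$, so $\mathcal{C}_j(\pi) = \{j\}$ and $|\Omega_j(\pi)| = 1$. For $a \leq j \leq a+k-2$ we have $\pi_j = j+1$, and again the previously parked cars all have smaller index than $\pi_j$, so $|\Omega_j(\pi)| = 1$. For $j \geq a+k$ we have $\pi_j = j$ and $\{\pi_1,\ldots,\pi_{j-1}\} = \{1,\ldots,j-1\}$, giving $|\Omega_j(\pi)| = 1$.

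The only nontrivial case is $j = a+k-1$, where $\pi_j = a$. Here the cars arriving after car $a$ that parked to the left are exactly $a+1, a+2, \ldots, a+k-1$, occupying spots $a, a+1, \ldots, a+k-2$. So $\mathcal{C}_{a+k-1}(\pi) = \{a, a+1, \ldots, a+k-1\}$ and $\Omega_{a+k-1}(\pi) = \{a, a+1, \ldots, a+k-1\}$, giving $|\Omega_{a+k-1}(\pi)| = k$. Multiplying everything together yields $\prod_{j=1}^{n} |\Omega_j(\pi)| = k$, as desired. The main (very mild) obstacle is just carefully ruling out the two forbidden patterns; once that is in hand, the computation of $\Omega_j$ is essentially bookkeeping.
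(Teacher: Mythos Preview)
Your proposal is correct and follows essentially the same approach as the paper: verify that $\pi$ avoids $(3,2,1)$ and $(3,4,1,2)$, invoke Theorem~\ref{thm:equality}, and compute $\prod_j |\Omega_j(\pi)|$. The only difference is cosmetic: the paper phrases the product computation in terms of cars (every car $c\neq a$ has a single option, car $a$ has $k$), whereas you index by spot $j$; and you spell out the pattern-avoidance verification in more detail than the paper, which simply asserts it.
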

\begin{proof}
We begin by noting that $\pi$ avoids $(3,2,1)$ and $(3,4,1,2)$.
Therefore, we can apply Theorem~\ref{thm:equality}, i.e., preference independence holds in this case. 
Note that for any car $c$ with $c \neq a$, car $c$ can only prefers one spot by Definition~\ref{prefSpots}, namely the spot in which it parks. 
Now, consider car $a$.
Since we move car $a$ to the right, all the cars left of car $a$ and right of car $a-1$ are greater than $a$. 
By Definition~\ref{prefSpots}, car $a$ can prefer any of the $k-1$ spots occupied by these cars. 
By Definition~\ref{prefSpots}, $a$ can also prefer the spot in which it parks.
Therefore, the total number of possible preferences for car $a$ is $k$. 
Since all other cars only prefer one spot, then, the product of all possible preferences is equal to~$k$.
\end{proof}

Applying Lemma~\ref{lem:kcycles} to the permutations of Example~\ref{ex:kcycles} yields:
\[|\OMVPn^{-1}((1,3,4,2,5,6))| = 3\qquad\mbox{and}\qquad|\OMVPn^{-1}((2,3,4,5,1))| = 5. \]

We now turn our attention to \emph{decreasing $k$-cycles}.
\begin{definition}
Let $\pi \in \Sym_n$. 
We say $\pi$ is a \emph{decreasing $k$-cycle} if, in cycle notation, it has the form $\langle b,b-1,\ldots, b-k+1\rangle$, for some $b\in[n]$. 
\end{definition}
In one-line notation, these permutations can be described as having entries in increasing order and in which one entry $b\in[n]$ has been moved left by $k-1$ spots. Namely, in one-line notation a decreasing $k$-cycle has the form:
\begin{align}\pi=(1,2,\ldots,b-k,b,b-k+1,\ldots,b-1,\hat{b},b+1,\ldots,n),\label{eq:move b left}\end{align}
where we shift the larger entry $b$ by $k-1$ indices to the left and each of the entries $b-k+1,b-k+2,\ldots,b-1$ to the right by one index, while all other entries remain in place. 
Informally, we think of this as just ``moving a larger number to the left.'' 
As before, in \eqref{eq:move b left}, we write $\hat{b}$ to illustrate that we have removed this instance of $b$ from the permutation.

\begin{example}\label{ex:inckcycles}
The permutation $\pi =\langle6,5,4\rangle= (1,2,3,6,4,5)$ is an decreasing $3$-cycle with $b = 6$, whereas the permutation $\tau = \langle5,4,3,2,1\rangle=(5,1,2,3,4)$ is an increasing $5$-cycle with  $b = 5$.
\end{example}

We now establish the following.
\begin{lemma}\label{lem:inckcycles}
Let $2\leq b\leq n$. If $\pi=\langle b,b-1,\ldots,b-k+1\rangle=(1,2,\ldots,b-k,b,b-k+1,\ldots,b-1,\hat{b},b+1,\ldots,n)$ 
is a decreasing $k$-cycle, then
\[|\OMVPn^{-1}(\pi)|=2^{k-1}.\]
\end{lemma}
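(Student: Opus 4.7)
The plan is to apply Theorem~\ref{thm:equality}, so I would first argue that $\pi$ avoids both $(3,2,1)$ and $(3,4,1,2)$, and then directly compute $\prod_{j=1}^n |\Omega_j(\pi)|$ from the explicit one-line form of $\pi$.

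For the pattern-avoidance step, I would use the structural observation that the entries of $\pi$ at positions $1, 2, \ldots, b-k$ are the identity, the entry at position $b-k+1$ is $b$, and the entries at positions $b-k+2, \ldots, n$ form the strictly increasing sequence $b-k+1, b-k+2, \ldots, b-1, b+1, \ldots, n$. Thus the only descents in $\pi$ occur between the value $b$ at position $b-k+1$ and each of the $k-1$ smaller values immediately following it, so no triple of entries can be strictly decreasing and $(3,2,1)$ is avoided. For $(3,4,1,2)$, a case split on the value playing the role of the `$3$' (call it $v_1$, at position $i_1$) rules out every possibility: taking $v_1 = b$ forces the `$1,2$' positions past $b$, where entries exceed $b$; taking $v_1$ to be an identity value $\leq b-k$ forces the `$1,2$' to occur further right with still smaller identity values, which the structure of $\pi$ does not provide; and taking $v_1$ inside the shifted block forces the suffix of $\pi$ starting after position $i_1$ to be strictly increasing, so $v_2 < v_3 < v_4$ and in particular $v_4 > v_1$, contradicting $v_4 < v_1$.

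With the hypotheses of Theorem~\ref{thm:equality} verified, I would then compute each $|\Omega_j(\pi)|$ from Definitions~\ref{def:cars} and~\ref{prefSpots}. For $j \notin \{b-k+2, \ldots, b\}$, every entry of $\pi_1, \ldots, \pi_{j-1}$ is less than $\pi_j$, so $\mathcal{C}_j(\pi) = \{\pi_j\}$ and $|\Omega_j(\pi)| = 1$. For $j \in \{b-k+2, \ldots, b\}$, we have $\pi_j = j - 1$, and the only earlier entry exceeding $j-1$ is car $b$, sitting at position $b-k+1$; hence $\mathcal{C}_j(\pi) = \{b, j-1\}$ and $|\Omega_j(\pi)| = 2$. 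Since $\{b-k+2, \ldots, b\}$ has exactly $k-1$ elements,
\[
|\OMVPn^{-1}(\pi)| = \prod_{j=1}^n |\Omega_j(\pi)| = 2^{k-1}.
\]

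The main obstacle I anticipate is the $(3,4,1,2)$ avoidance, which requires the case analysis above rather than the one-line argument that handles $(3,2,1)$; once that is secured, the remainder of the proof is a bookkeeping exercise based on the explicit structure of $\pi$.
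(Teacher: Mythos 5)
Your proof is correct and follows essentially the same route as the paper's: verify that $\pi$ avoids $(3,2,1)$ and $(3,4,1,2)$, invoke Theorem~\ref{thm:equality}, and compute $\prod_{j=1}^n|\Omega_j(\pi)|$ by observing that $|\Omega_j(\pi)|=2$ exactly for the $k-1$ positions $j\in\{b-k+2,\ldots,b\}$ (where $\mathcal{C}_j(\pi)=\{b,j-1\}$) and $|\Omega_j(\pi)|=1$ otherwise. If anything you are more thorough than the paper, which asserts the pattern avoidance without argument; just make sure your case split for $(3,4,1,2)$ also explicitly covers the role of the ``3'' being played by a value $v_1>b$, though the same increasing-suffix argument you use for the shifted block disposes of that case immediately.
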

\begin{proof}
We begin by noting that $\pi$ avoids $(3,2,1)$ and $(3,4,1,2)$.
Therefore, we can apply Theorem~\ref{thm:equality}, i.e., preference independence holds in this case. 
Note that all of the cars left of $b$ are smaller than $b$, and that they are arranged in increasing order up to $b$. 
Therefore, by Definition~\ref{prefSpots}, each these cars can only prefer one spot, namely the spot in which it parks.  
Similarly, by Definition~\ref{prefSpots}, car $b$ can only prefers one spot, namely the spot in which it parks. 
Lastly, cars $b+1,\ldots,n$ can only prefer one spot as well, by the same argument. 
Now consider all the cars right of $b$ and left of $b+1$, which are the cars $b-k+1,b-k+2,\ldots,b-1$. These cars are arranged in increasing order, but are each less than $b$. 
Therefore, by Definition~\ref{prefSpots}, each of these cars can only prefers two spots: the one they currently occupy and that of $b$. 
The total number of such cars is $k-1$ and since each car prefers two spots independently, the product of their possible preferences is given by $2^{k-1}$. 
Since all other cars prefer only one spot, the product of all possible preferences is equal to $2^{k-1}$.
\end{proof}

Applying Lemma~\ref{lem:inckcycles} to the permutations of Example~\ref{ex:inckcycles} yields:
\[|\OMVPn^{-1}((1,2,3,6,4,5))| = 2^{2-1}= 4\qquad \mbox{and}\qquad |\OMVPn^{-1}((5,1,2,3,4)) |= 2^{5-1}= 16.\]

\section{The outcome map and Motzkin paths}
\label{sec:special perms}
In this section, we give a bijection between Motzkin paths of length $n$ and the elements of $\OMVPn^{-1}(w_0)$, where $w_0\coloneqq(n,n-1,\ldots,2,1)$ is the longest word in $\Sym_n$. 

\begin{definition}
Let $n \in \mathbb{N}$.
A \emph{Motzkin path} of length $n$ is a lattice path consisting solely of horizontal steps $(1,0)$, upward diagonal steps $(1,1)$, and downward diagonal steps $(1,-1)$, which begins at $(0,0)$ and ends at $(n,0)$, and which does not fall below the $x$-axis. 
\end{definition}
Figure~\ref{fig:Motzkin path} illustrates the nine Motzkin paths of length $4$.

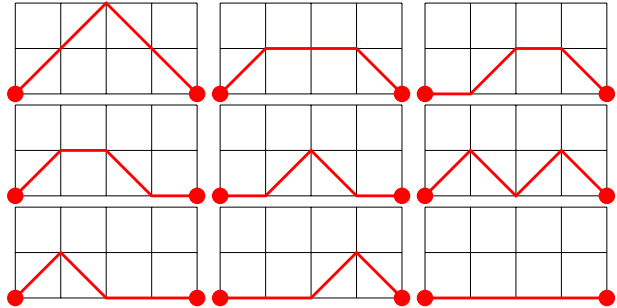
\begin{figure}[!ht]
    \centering
    \resizebox{.5\textwidth}{!}{
\begin{tikzpicture}[scale=.9, transform shape] \tikzstyle{every node} = [circle, fill=red]
\draw (0,0) grid (4,2);
 \node (a) at (0, 0) {};
 \node (b) at (4,0) {};
\draw[ultra thick, red] (0,0)--(2,2)--(4,0);
\end{tikzpicture}

\begin{tikzpicture}[scale=.9, transform shape] \tikzstyle{every node} = [circle, fill=red]
\draw (0,0) grid (4,2);
 \node (a) at (0, 0) {};
 \node (b) at (4,0) {};
\draw[ultra thick, red] (0,0)--(1,1)--(3,1)--(4,0);
\end{tikzpicture}
\begin{tikzpicture}[scale=.9, transform shape] \tikzstyle{every node} = [circle, fill=red]
\draw (0,0) grid (4,2);
 \node (a) at (0, 0) {};
 \node (b) at (4,0) {};
\draw[ultra thick, red] (0,0)--(1,0)--(2,1)--(3,1)--(4,0);
\end{tikzpicture}
}
\resizebox{.5\textwidth}{!}{
\begin{tikzpicture}[scale=.9, transform shape] \tikzstyle{every node} = [circle, fill=red]
\draw (0,0) grid (4,2);
 \node (a) at (0, 0) {};
 \node (b) at (4,0) {};
\draw[ultra thick, red] (0,0)--(1,1)--(2,1)--(3,0)--(4,0);
\end{tikzpicture}

\begin{tikzpicture}[scale=.9, transform shape] \tikzstyle{every node} = [circle, fill=red]
\draw (0,0) grid (4,2);
 \node (a) at (0, 0) {};
 \node (b) at (4,0) {};
 \draw[ultra thick, red] (0,0)--(1,0)--(2,1)--(3,0)--(4,0);
\end{tikzpicture}

\begin{tikzpicture}[scale=.9, transform shape] \tikzstyle{every node} = [circle, fill=red]
\draw (0,0) grid (4,2);
 \node (a) at (0, 0) {};
 \node (b) at (4,0) {};
 \draw[ultra thick, red] (0,0)--(1,1)--(2,0)--(3,1)--(4,0);
\end{tikzpicture}
}
\resizebox{.5\textwidth}{!}{
\begin{tikzpicture}[scale=.9, transform shape] \tikzstyle{every node} = [circle, fill=red]
\draw (0,0) grid (4,2);
 \node (a) at (0, 0) {};
 \node (b) at (4,0) {};
 \draw[ultra thick, red] (0,0)--(1,1)--(2,0)--(3,0)--(4,0);
\end{tikzpicture}

\begin{tikzpicture}[scale=.9, transform shape] \tikzstyle{every node} = [circle, fill=red]
\draw (0,0) grid (4,2);
 \node (a) at (0, 0) {};
 \node (b) at (4,0) {};
 \draw[ultra thick, red] (0,0)--(1,0)--(2,0)--(3,1)--(4,0);
\end{tikzpicture}
\begin{tikzpicture}[scale=.9, transform shape] \tikzstyle{every node} = [circle, fill=red]
\draw (0,0) grid (4,2);
 \node (a) at (0, 0) {};
 \node (b) at (4,0) {};
 \draw[ultra thick, red] (0,0)--(1,0)--(2,0)--(3,0)--(4,0);
\end{tikzpicture}
}
    \caption{Motzkin paths of length $4$.}
    \label{fig:Motzkin path}
\end{figure} 

We let $\mathcal{M}_n$ denote the set of Motzkin paths of length $n \in \mathbb{N}$.
The sequence $|\mathcal{M}_n|$ for $n \in \mathbb{N}$ is known as the \textit{Motzkin numbers}\footnote{OEIS \textcolor{blue}{\href{https://oeis.org/A001006}{A001006}}.}.
The sequence begins:
\[1, 2, 4, 9, 21, 51, 127, 323, 835, 2188, 5798, 15511, 41835,\ldots\]
and has closed formula 
\[|\mathcal{M}_n|=\sum_{k=0}^{\lfloor\frac n2\rfloor}\binom{n}{2k}C_k,\]
where $C_k=\frac{1}{k+1}\binom{2k}{k}$ denotes the $k$th Catalan number\footnote{OEIS 
\textcolor{blue}{\href{http://oeis.org/A000108}{A000108}}. For a comprehensive survey, see Stanley~\cite{stanley2015catalan}.
}.

For $n \in \mathbb{N}$, let $\mathcal{P}_n$ be the set of lattice paths of length $n$ beginning at $(0,0)$ and consisting of upward diagonal, downward diagonal, and horizontal steps.
We begin by constructing lattice paths from MVP parking functions as follows.
\begin{definition}
\label{lattice}
Let $n \in \mathbb{N}$ and $v = (v_1, v_2, \ldots, v_n) \in [n]^n$.
Let $\Phi: [n]^n \rightarrow \mathcal{P}_n$, where $\Phi(v)=\phi(1)\phi(2)\cdots\phi(n)$ starts from $(0,0)$ and is built iteratively as follows.
For each $j \in [n]$:
\begin{itemize}
    \item If $|\{i \in [n] : v_i = j \}| = 0$, then $\phi(j)=D$ denoting a downward diagonal step $(1,-1)$.
    \item If $|\{i \in [n] : v_i = j \}| = 1$,  then $\phi(j)=H$ denoting a horizontal step $(1,0)$.
    \item If $|\{i \in [n] : v_i = j \}| \geq 2$,  then $\phi(j)=U$ denoting an upward diagonal step $(1,1)$.
\end{itemize}
\end{definition}

\begin{example}
Figure~\ref{fig:path} illustrates the lattice path corresponding to $v = (2,2,1,3)$, which one constructs based on the following: 
\begin{itemize}
    \item If $j=1$ or $j=3$, then $|\{i\in[4]:v_i=1\}|=|\{i\in[4]:v_i=3\}|=1$, so $\phi(1)=\phi(3)=H$.
    \item If $j=2$, then $|\{i\in[4]:v_i=2\}|=2$, so $\phi(2)=U$.
    \item If $j=4$, then $|\{i\in[4]:v_i=4\}|=0$, so $\phi(4)=D$.
\end{itemize}
Hence $\Phi((2,2,1,3))=HUHD$. Note that this is the same lattice path as that corresponding to $v = (2,1,2,3)$.
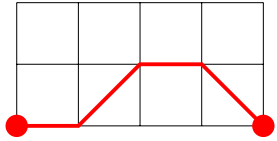
\begin{figure}[!ht]
    \centering
        \resizebox{1.5in}{!}{
\begin{tikzpicture}[scale=.9, transform shape] \tikzstyle{every node} = [circle, fill=red]
\draw (0,0) grid (4,2);
 \node (a) at (0, 0) {};
 \node (b) at (4,0) {};
\draw[ultra thick, red] (0,0)--(1,0)--(2,1)--(3,1)--(4,0);
\end{tikzpicture}
}
    \caption{Lattice path corresponding to $(2,2,1,3)$ and $(2,1,2,3)$.}
    \label{fig:path}
\end{figure} 
\end{example}

The main result of this section is as follows.    
\begin{theorem}\label{thm:motzkin}
If $n\geq 1$ and $w_0 = (n,n-1,\ldots,3,2,1) \in \Sym_n$ (i.e., $w_0$ is the longest word in $\Sym_n$), then, $|\OMVPn^{-1}(w_0)|= |\mathcal{M}_n|$.
\end{theorem}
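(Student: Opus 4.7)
The plan is to show that the map $\Phi$ from Definition~\ref{lattice}, restricted to $\OMVPn^{-1}(w_0)$, is a bijection onto $\mathcal{M}_n$, by constructing an explicit inverse $\Psi: \mathcal{M}_n \to \OMVPn^{-1}(w_0)$. Given $P = \phi_1 \phi_2 \cdots \phi_n \in \mathcal{M}_n$, take the standard non-crossing matching of its $U$-$D$ pairs. For each matched pair consisting of $U$ at position $j$ and $D$ at position $k$ (with $j < k$), set $a_{n-j+1} = j$ and $a_{n-k+1} = j$; for each unmatched $H$ at position $j$, set $a_{n-j+1} = j$. This defines $\Psi(P)$, whose spot-multiplicities are $0$, $1$, and $2$ at positions with $D$, $H$, and $U$, respectively. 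In particular $\Phi(\Psi(P)) = P$, so $\Psi$ is automatically injective.

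Next, I verify $\Psi(P) \in \OMVPn^{-1}(w_0)$ by induction on $n$ using the recursive decomposition of Motzkin paths. If $P = H \cdot P'$ with $P' \in \mathcal{M}_{n-1}$, then the cars $1, \ldots, n-1$ all have preferences in $\{2, \ldots, n\}$, and by the inductive hypothesis (applied to $\Psi(P')$, suitably shifted) they park in the reversed pattern on spots $\{2, \ldots, n\}$; car $n$ then parks in the empty spot $1$. If $P = U \cdot P_1 \cdot D \cdot P_2$, where the initial $U$ matches $D$ at position $m$, $P_1 \in \mathcal{M}_{m-2}$, and $P_2 \in \mathcal{M}_{n-m}$, then by induction the cars corresponding to $P_2$ (namely cars $1, \ldots, n-m$) park in spots $\{m+1, \ldots, n\}$ in the reversed pattern; car $n-m+1$ (with $a_{n-m+1} = 1$) then finds spot $1$ empty and parks there; the cars corresponding to $P_1$ (namely cars $n-m+2, \ldots, n-1$) park in spots $\{2, \ldots, m-1\}$; and finally car $n$ (with $a_n = 1$) bumps car $n-m+1$ out of spot $1$, who is forced forward to spot $m$, the unique remaining empty spot, completing the outcome $w_0$.

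For surjectivity of $\Psi$, I argue by induction on $n$ that every $v \in \OMVPn^{-1}(w_0)$ equals $\Psi(P)$ for some $P$. Note $a_n = 1$ always, since car $n$ arrives last and parks in spot $1$. Let $r$ denote the multiplicity of spot $1$ in $v$. If $r = 1$, the cars $1, \ldots, n-1$ (whose preferences lie in $\{2, \ldots, n\}$) form, after shifting by $1$, an MVP sub-problem of length $n-1$ whose outcome is the longest word in $\Sym_{n-1}$; by induction this sub-problem corresponds to a Motzkin path $P'$, and $v = \Psi(H \cdot P')$. If $r \geq 2$, let $k < n$ be the largest index with $a_k = 1$. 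I then establish two sub-claims: (a) $r = 2$, and (b) the preferences of cars $1, \ldots, k-1$ lie in $\{n-k+2, \ldots, n\}$ while those of cars $k+1, \ldots, n-1$ lie in $\{2, \ldots, n-k\}$. Under these sub-claims, the two halves of cars form independent MVP sub-problems on disjoint spot ranges, yielding Motzkin paths $P_1$ and $P_2$ by induction, and $v = \Psi(U \cdot P_1 \cdot D \cdot P_2)$ with the initial $U$ at position $1$ matching the $D$ at position $n-k+1$.

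The main obstacle is proving sub-claims (a) and (b). The key observations are that MVP cars only move forward, and that car $k$ must end at spot $n-k+1$ by $w_0$, forcing $n-k+1$ to be the unique empty spot among $\{2, \ldots, n\}$ just before car $n$ arrives; hence spots $\{2, \ldots, n-k\}$ must be fully occupied at time $n-1$. Because any car $i < k$ satisfies $a_i \leq n-i+1$ and has final spot $n-i+1 \geq n-k+2$, allowing some $a_i \leq n-k$ would force car $i$ into a bumping chain through the left region that obstructs the cars in $\{k+1, \ldots, n-1\}$ from filling $\{2, \ldots, n-k\}$ in the pattern dictated by $w_0$; this yields (b). A parallel argument rules out a third car preferring spot $1$: such a car would end at a spot forcing it into an incompatible bumping chain, yielding (a).
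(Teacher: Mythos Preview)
Your recursive approach via the first-return decomposition of Motzkin paths is genuinely different from the paper's route, which instead works with the full non-crossing (``laminar'') enclosure structure of the matched $U$--$D$ pairs and inducts on the laminar level of an enclosure (their Lemma~\ref{upward parks downward}) to show $\Phi^{-1}(P)\in\OMVPn^{-1}(w_0)$, and separately shows $\Phi(\alpha)\in\mathcal{M}_n$ by counting multiplicities (their Lemmas~\ref{twoOnes}--\ref{lem:up}). Your inverse $\Psi$ is literally the same map as the paper's $\Phi^{-1}$, but your verification that $\Psi(P)$ parks as $w_0$ and your surjectivity argument are organized around the single split ``$P=HP'$ or $P=UP_1DP_2$'' rather than the full laminar family. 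That is a legitimate alternative and arguably more transparent for the forward direction; the paper's laminar induction in exchange handles all nested enclosures simultaneously and so avoids having to identify the special index $k$ when proving surjectivity.

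The one real gap is in your justification of sub-claims (a) and (b). Saying that a car $i<k$ with $a_i\le n-k$ would ``obstruct'' the cars $k+1,\ldots,n-1$ is not by itself a contradiction: such a car might be bumped out of the left region. What actually pins it down is the observation you already made but did not exploit: spot $n-k+1$ is vacant at time $n-1$, and since occupied spots stay occupied under the MVP rule, spot $n-k+1$ is vacant at \emph{every} time $1,\ldots,n-1$. Hence no car in $[n]$ can prefer $n-k+1$ (it would park there), which disposes of the case $a_i=n-k+1$ that your sketch omits. Moreover, any car currently in $\{1,\ldots,n-k\}$ that is bumped must land at some spot $\le n-k+1$, and landing at $n-k+1$ contradicts its vacancy; so such a car can never reach a spot $\ge n-k+2$. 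Since car $i<k$ has final spot $n-i+1\ge n-k+2$, this forces $a_i\ge n-k+2$, giving (b), and (a) is then an immediate consequence rather than a ``parallel argument''. With this barrier argument inserted, your recursive proof goes through.
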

In order to prove Theorem~\ref{thm:motzkin} 
we establish a bijection between the set of MVP parking functions $\alpha$ satisfying $\OMVPn(\alpha)=(n,n-1,\ldots, 3,2,1)$ and the set of lattice paths $\Phi(\alpha)$ arising from them, and then we show that the set $\Phi(\alpha)$ is precisely the set $\M_n$.
To begin we show that at most two cars can prefer the same spot if $\alpha$ satisfies $\OMVPn(\alpha)=(n,n-1,\ldots,3,2,1)$.

\begin{lemma}\label{twoOnes}
If $\alpha$ is an MVP parking function satisfying $\OMVPn(\alpha)=(n,n-1,\ldots, 3,2,1)$, then any spot in $[n]$ is preferred by at most two cars.
\end{lemma}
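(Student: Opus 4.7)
The plan is to argue by contradiction: assume three cars $c_1<c_2<c_3$ all prefer a common spot $s$, and derive that $\OMVPn(\alpha)\neq w_0$.

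First, I would show that the latest-arriving car preferring $s$, namely $c_3$, is the car ultimately parked in spot $s$. Once $c_1$ arrives and lands in $s$ (after possibly bumping an earlier car that was bumped there), spot $s$ remains occupied forever, since a bumped car only moves forward from its preferred spot and thus can never be displaced into $s$. Moreover, the occupant of $s$ can only change when a later car preferring $s$ arrives and bumps. Because $c_1,c_2,c_3$ are by assumption the only cars preferring $s$, the final occupant of $s$ is $c_3$, and the outcome being $w_0$ then forces $c_3=n-s+1$.

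Second, I would analyze the positions of $c_1$ and $c_2$ immediately after $c_3$'s arrival. Let $P_1$ denote $c_1$'s position at that moment, and let $R_2$ denote the spot $c_3$ bumps $c_2$ to. The crux is a ``filled-gap'' invariant: from the instant $c_1$ is first bumped out of $s$ by $c_2$, every spot from $s+1$ up to $c_1$'s current position is occupied. The invariant holds at the initial bump, since $c_1$ moves to the first empty spot past $s$, and is preserved under each subsequent bump of $c_1$, since the bumper fills the spot $c_1$ vacates while $c_1$ again moves to the first empty spot past its new location. Consequently, when $c_3$ bumps $c_2$ from $s$, the first empty spot past $s$ at that instant must lie strictly beyond $P_1$, so $R_2>P_1$.

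Third, I would argue that no car arriving after $c_3$ can bump either $c_1$ or $c_2$. By Definition~\ref{prefSpots} applied to $w_0$, any car $i>c_3=n-s+1$ can only prefer spots in $\Omega_{n-i+1}(w_0)=[n-i+1]\subseteq[s-1]$, hence strictly less than $s$. Since $c_1$ and $c_2$ are at positions strictly greater than $s$, they are never dislodged by later arrivals. Thus the final positions of $c_1$ and $c_2$ are $P_1$ and $R_2$, with $P_1<R_2$. But outcome $w_0$ requires the final position of $c_1$, namely $n-c_1+1$, to strictly exceed that of $c_2$, namely $n-c_2+1$, contradicting $P_1<R_2$.

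The main obstacle is stating the filled-gap invariant rigorously and handling the (possibly multiple) intermediate bumps that $c_1$ may experience between the arrivals of $c_2$ and $c_3$; the remaining verifications about which spots late-arriving cars may prefer are routine bookkeeping using Definition~\ref{prefSpots}.
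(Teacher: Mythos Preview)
Your approach matches the paper's: pick three cars preferring a common spot, identify the latest as the eventual occupant (forcing $c_3=n-s+1$), and show the earlier two land in the wrong relative order; your filled-gap invariant actually makes rigorous a step the paper's proof glosses over. One small slip: the negation of the lemma gives \emph{at least} three cars preferring $s$, not exactly three, so the phrase ``by assumption the only cars preferring $s$'' is unjustified---but taking $c_1,c_2,c_3$ to be the three largest-indexed cars preferring $s$ fixes this immediately, and the rest of your argument goes through unchanged.
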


\begin{proof}
Assume for the sake of contradiction that, for some $i \in [n]$, there are more than 2 cars that prefer spot $i$ in $\alpha$. Since car $n-i+1$ must park at spot $i$, it must be the last car preferring this spot. Consider three cars $x$, $y$, $z$ that all prefer spot $i$, where $x < y < z$, and in which car $z = n-i+1$, and no car $y < y' < z$ prefers spot $i$. In this situation, car $x$ is bumped by car $y$ (or some other car) out of spot $i$, and then car $y$ is bumped out of spot $i$ by car $z$. Since any car $z'$ after $z$ must park at some spot $j < i$, $z'$ does not interfere with how cars $x, y, z$ park, so our MVP parking rule yields $\OMVPn(\alpha) = (\ldots, z, \ldots, x, \ldots, y, \ldots)$. 
We have arrived at a contradiction, since the three cars are supposed to park in the order $\ldots z \ldots y \ldots x \ldots$.
\end{proof}

For the rest of this section, we restrict the domain of $\Phi$ to the set  $\OMVPn^{-1}((n,n-1,\ldots,3,2,1))$.
\begin{lemma}\label{lem:down}
If $\alpha$ is an MVP parking function satisfying $\OMVPn(\alpha)=(n,n-1,\ldots, 3,2,1)$, then for any $i \in [n]$ the lattice path $\Phi(\alpha)$ satisfies that the first $i$ steps have at least as many upward diagonals as downward diagonals.
\end{lemma}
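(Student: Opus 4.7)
The plan is to translate the claim about the lattice path into a counting statement about preferences and then invoke the nondecreasing-rearrangement characterization of (classical = MVP) parking functions.

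First I would fix $i \in [n]$ and count, for each $j \in [i]$, the multiplicity $m_j := |\{c \in [n] : a_c = j\}|$ of spot $j$ as a preference. By Lemma~\ref{twoOnes}, the hypothesis $\OMVPn(\alpha) = (n, n-1, \ldots, 2, 1)$ forces $m_j \in \{0, 1, 2\}$ for every $j \in [n]$. Combined with Definition~\ref{lattice}, this means the $j$-th step of $\Phi(\alpha)$ is $U$, $H$, or $D$ precisely when $m_j = 2$, $1$, or $0$, respectively.

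Writing $U_i$, $H_i$, $D_i$ for the number of steps of each type among the first $i$ steps of $\Phi(\alpha)$, I would then record the two identities
\begin{equation*}
U_i + H_i + D_i = i \qquad \text{and} \qquad 2 U_i + H_i = \sum_{j=1}^{i} m_j = |\{c \in [n] : a_c \leq i\}|.
\end{equation*}
Subtracting the first from the second gives $U_i - D_i = |\{c \in [n] : a_c \leq i\}| - i$, so the inequality $U_i \geq D_i$ is equivalent to the statement that at least $i$ cars have a preference in $[i]$.

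To close the argument, I would invoke Theorem~\ref{theorem: v in PF iff v in MVP}, which tells us that $\alpha \in \PF_n$, and then apply the classical characterization of parking functions (stated as the second corollary following Theorem~\ref{theorem: v in PF iff v in MVP}): the nondecreasing rearrangement $b(\alpha) = (b_1, b_2, \ldots, b_n)$ satisfies $b_i \leq i$ for every $i \in [n]$. Since $b_i \leq i$ means that at least $i$ of the entries of $\alpha$ are at most $i$, we conclude $|\{c \in [n] : a_c \leq i\}| \geq i$, hence $U_i \geq D_i$, which is exactly the desired inequality. There is no serious obstacle here; the only care needed is in correctly reading off the multiplicity interpretation of the step types from Definition~\ref{lattice} and recognizing that Lemma~\ref{twoOnes} is exactly what rules out the ``$U$ steps corresponding to multiplicities $\geq 2$'' case being ambiguous.
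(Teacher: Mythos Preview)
Your proof is correct. Both your argument and the paper's reduce to the same counting core---that among the first $i$ steps, the number of preferences landing in $[i]$ equals $2U_i + H_i$, and since $U_i + H_i + D_i = i$, one needs $|\{c : a_c \leq i\}| \geq i$---but you justify this last inequality differently. The paper pins down \emph{which} $i$ cars have preferences in $[i]$: since the outcome is $w_0$, cars $n, n-1, \ldots, n-i+1$ park in spots $1,\ldots,i$, and under the MVP rule a car's final spot is always at or right of its preference, so each of these $i$ cars prefers a spot in $[i]$; a pigeonhole finish with Lemma~\ref{twoOnes} then gives at least $m$ upward diagonals. You instead appeal to the nondecreasing-rearrangement characterization of parking functions (via Theorem~\ref{theorem: v in PF iff v in MVP}), which yields $|\{c : a_c \leq i\}| \geq i$ without identifying specific cars. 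Your route is slightly more streamlined and makes it transparent that the hypothesis $\OMVPn(\alpha) = w_0$ enters only through Lemma~\ref{twoOnes}; the paper's route, by contrast, keeps the argument self-contained to the MVP parking dynamics and avoids invoking the classical characterization.
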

\begin{proof}
For all $j \in [n]$, parking spot $j$ corresponds to the $j$th lattice step in $\Phi(\alpha)$. By Lemma~\ref{twoOnes} and by definition of $\Phi(\alpha)$, 
each upward diagonal corresponds to a spot preferred by exactly two cars, each horizontal step corresponds to a spot preferred by exactly one car, and each downward diagonal corresponds to a spot preferred by no cars. For each $j \in [n]$, car $n-j+1$ must prefer a spot in $[j]$, since it must park at spot $j$.

Let $i \in [n]$. Among the first $i$ steps, label those that are downward diagonals as $i_1, i_2, \ldots, i_m \leq i$. For each $j \in [m]$, car $n-i_j+1$ must park at spot $i_j$, but prefers some spot $i'_j < i_j$ that is not a downward diagonal. Thus, we have $i$ cars preferring the remaining $i-m$ spots. By Lemma~\ref{twoOnes} and the pigeonhole principle, at least $m$ of these remaining $i-m$ spots must be preferred exactly twice. Thus, there are at least $m$ upward diagonals among the first $i$ steps.
\end{proof}

\begin{corollary}\label{start at (0,0)}
If $\alpha$ is an MVP parking function satisfying $\OMVPn(\alpha)=(n,n-1,\ldots, 3,2,1)$, then the lattice path $\Phi(\alpha)$ always begins at $(0,0)$ and the first step is either a horizontal step or an upward diagonal step.
\end{corollary}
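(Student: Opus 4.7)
My plan is that the corollary really has two claims, and both reduce to observations already set up. The assertion that $\Phi(\alpha)$ begins at $(0,0)$ is immediate from Definition~\ref{lattice}, since by construction $\Phi(v)$ is defined to start at the origin for every $v \in [n]^n$; I would simply state this and move on.

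For the substantive half, namely that $\phi(1) \in \{H, U\}$, I would invoke Lemma~\ref{lem:down} with $i=1$. That lemma guarantees that among the first $i$ steps of $\Phi(\alpha)$ there are at least as many upward diagonals as downward diagonals. With $i=1$, having $\phi(1) = D$ would mean one downward and zero upward diagonals in the first step, contradicting the lemma. Hence $\phi(1)$ must be either $H$ or $U$.

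As a sanity check (and a second, self-contained route I could include as a remark), the claim can also be seen directly from the MVP rule: since $\OMVPn(\alpha) = (n, n-1, \ldots, 2, 1)$, car $n$ must park in spot $1$. But under the MVP rule the last-arriving car always occupies its preferred spot (any earlier car there is simply bumped), so car $n$'s preference is exactly $1$. Therefore at least one entry of $\alpha$ equals $1$, giving $|\{i \in [n] : a_i = 1\}| \geq 1$ and hence $\phi(1) \in \{H, U\}$ by Definition~\ref{lattice}. There is no real obstacle here; the only small care needed is to point out explicitly that we are applying Lemma~\ref{lem:down} at its smallest admissible index.
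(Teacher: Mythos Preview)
Your proposal is correct and follows exactly the paper's approach: cite Definition~\ref{lattice} for the starting point at $(0,0)$, then invoke Lemma~\ref{lem:down} (at $i=1$) to rule out an initial downward step. Your added direct argument via the MVP rule is a nice supplementary remark but not needed for the proof.
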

\begin{proof}
We know from Definition~\ref{lattice} that we always start at $(0,0)$. Now, from Lemma~\ref{lem:down}, we know that we can never start with a downward diagonal. Therefore, our first step is always going to be a horizontal step, or an upward diagonal.
\end{proof}

\begin{lemma}\label{lem:up}
If $\alpha$ is an MVP parking function satisfying $\OMVPn(\alpha)=(n,n-1,\ldots, 3,2,1)$, then for any $i \in [n]$ the lattice path $\Phi(\alpha)$ satisfies that the last $i$ steps have at least as many downward diagonals as upward diagonals.
\end{lemma}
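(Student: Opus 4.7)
The plan is to reduce Lemma~\ref{lem:up} to Lemma~\ref{lem:down} by first establishing the global identity that $\Phi(\alpha)$ contains exactly as many upward diagonal steps as downward diagonal steps. Given such an identity, an inequality on the \emph{last} $i$ steps is equivalent to the reverse inequality on the \emph{first} $n-i$ steps, which is precisely the content of Lemma~\ref{lem:down}.

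To establish the global identity, I would count car preferences two ways. Since each of the $n$ cars contributes exactly one entry to $\alpha=(v_1,\ldots,v_n)$, partitioning by preferred spot gives
\[
\sum_{j=1}^{n}\bigl|\{i\in[n] : v_i = j\}\bigr| = n.
\]
By Lemma~\ref{twoOnes}, every term on the left lies in $\{0,1,2\}$, and by Definition~\ref{lattice} these three cases correspond respectively to a $D$, $H$, or $U$ step of $\Phi(\alpha)$. Letting $\#U$, $\#H$, $\#D$ denote the total number of each step type across the $n$ steps of $\Phi(\alpha)$, the displayed identity reads $2\#U + \#H = n$. Subtracting the trivial identity $\#U + \#H + \#D = n$ then yields $\#U = \#D$.

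With the global identity in hand, the proof finishes quickly. Fix $i \in [n]$; if $i = n$, the claim reduces to $\#D \geq \#U$, which we have as equality. Otherwise, apply Lemma~\ref{lem:down} to the prefix of length $n-i$ to obtain $\#U_{[1,n-i]} \geq \#D_{[1,n-i]}$, where the subscript records the range of steps being counted. Subtracting these prefix counts from the respective totals and using $\#U = \#D$ gives $\#D_{[n-i+1,n]} \geq \#U_{[n-i+1,n]}$, which is the desired conclusion.

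The only real obstacle is recognizing that Lemma~\ref{twoOnes} is the right tool for the double count; once that is noted, the step balance $\#U=\#D$ drops out and the suffix inequality follows from the prefix inequality without needing a separate combinatorial mimicry of Lemma~\ref{lem:down}'s bumping argument. A direct argument tracking, for each upward diagonal among the last $i$ steps, an extra bumped car that must ultimately appear among the last $i$ spots would also work, but the counting shortcut is noticeably cleaner.
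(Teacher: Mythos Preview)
Your proof is correct and takes a genuinely different route from the paper. The paper argues by contradiction on the suffix directly: if the last $i$ steps had $m_2$ upward diagonals and $m_1 < m_2$ downward diagonals, then (using Lemma~\ref{twoOnes} to know each $U$ spot is preferred by exactly two cars) the number of cars preferring one of the last $i$ spots is $2m_2 + (i - m_1 - m_2) = i + (m_2 - m_1) > i$; since under the MVP rule a car always ends up at or to the right of its preferred spot, more than $i$ cars would have to occupy the last $i$ spots, contradicting that $\alpha$ is a parking function.

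Your approach instead proves the global balance $\#U = \#D$ first by the double count $2\#U + \#H = n = \#U + \#H + \#D$, and then converts the prefix inequality of Lemma~\ref{lem:down} into the suffix inequality by subtraction. This is a clean duality reduction: it avoids reproving any bumping/parking facts for the suffix and makes Lemma~\ref{lem:up} an immediate corollary of Lemma~\ref{lem:down} once the balance is known. The paper's argument is more self-contained (it does not invoke Lemma~\ref{lem:down}), while yours has the advantage that the identity $\#U = \#D$ is established up front rather than deduced afterward, as the paper does in Proposition~\ref{parking to motzkin}, from the two lemmas taken at $i = n$.
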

\begin{proof}
Assume for the sake of contradiction that there exists $i \in [n]$ such that the last $i$ steps of $\Phi(\alpha)$ have $m_1$ downward diagonals and $m_2$ upward diagonals where $m_1 < m_2$. Among these last $i$ steps, there are $m_3 = i-m_1-m_2$ horizontal ones. By Lemma~\ref{twoOnes}, each of the $m_2$ spots that are upward diagonals is preferred by exactly two cars. It follows that the number of cars preferring the last $i$ spots is $2m_2+m_3 = m_2+m_3+m_2 = i-m_1+m_2 > i$. Thus, at least one of these cars cannot park in these $i$ spots, contrary to $\alpha$ being an MVP parking function.
\end{proof}

We now establish that the lattice paths constructed via $\Phi(\alpha)$ when $\alpha$ satisfies $\OMVPn(\alpha)=(n,n-1,\ldots,3,2,1)$ are indeed the set of Motzkin paths. Since the paths $\Phi(\alpha)$, by definition, begin at $(0,0)$ and consist of upward diagonal, downward diagonal, and horizontal steps, it suffices to establish that these paths end at $(n,0)$ and never fall below the $x$-axis. We establish this result next.

\begin{proposition}\label{parking to motzkin}
If $\alpha$ satisfies $\OMVPn(\alpha)=(n,n-1,\ldots,3,2,1)$, then $\Phi(\alpha) \in \mathcal{M}_n$.
\end{proposition}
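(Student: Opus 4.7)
The plan is to read off the Motzkin condition as an immediate consequence of the three facts already in hand: Corollary~\ref{start at (0,0)}, Lemma~\ref{lem:down}, and Lemma~\ref{lem:up}. By Definition~\ref{lattice}, every step of $\Phi(\alpha)$ is one of $H$, $U$, or $D$, so the path already has the correct step set. Corollary~\ref{start at (0,0)} gives that $\Phi(\alpha)$ begins at $(0,0)$. So what remains is to verify (i) the path stays weakly above the $x$-axis, and (ii) the path ends at $(n,0)$.

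For (i), observe that the height of $\Phi(\alpha)$ after $i$ steps is exactly $(\#U\text{ in first }i\text{ steps}) - (\#D\text{ in first }i\text{ steps})$, which is nonnegative for every $i \in [n]$ by Lemma~\ref{lem:down}. This guarantees $\Phi(\alpha)$ never dips below the $x$-axis. For (ii), the $x$-coordinate of the endpoint is automatically $n$ since we take exactly $n$ steps, each of horizontal length $1$; the issue is the height. Applying Lemma~\ref{lem:down} with $i = n$ gives that the total number of upward diagonals in $\Phi(\alpha)$ is at least the total number of downward diagonals, while applying Lemma~\ref{lem:up} with $i = n$ gives the reverse inequality. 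Combining these two inequalities forces $\#U = \#D$ globally, so the terminal height is $0$ and the endpoint is $(n,0)$.

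Putting these three ingredients together in this order yields $\Phi(\alpha) \in \mathcal{M}_n$. There is no real obstacle here — all the work was already done in proving Lemmas~\ref{lem:down} and~\ref{lem:up}; this proposition is the cleanup step that packages those into the Motzkin path definition, and the writeup should be only a few lines.
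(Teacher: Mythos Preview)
Your proposal is correct and follows essentially the same approach as the paper: both arguments use Lemma~\ref{lem:down} to guarantee the path stays weakly above the $x$-axis, and then combine the $i=n$ cases of Lemma~\ref{lem:down} and Lemma~\ref{lem:up} to force $\#U=\#D$ and hence endpoint $(n,0)$. The only cosmetic difference is that you explicitly invoke Corollary~\ref{start at (0,0)} and Definition~\ref{lattice} for the starting point and step set, which the paper leaves implicit.
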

\begin{proof}
By Lemma~\ref{lem:down}, $\Phi(\alpha)$ never falls below the $x$-axis as $\Phi(\alpha)$ has at least as many upward diagonals as downward ones at each step $i\in[n]$. Taking $i=n$ yields that $\Phi(\alpha)$ has at least as many upward diagonals as downward ones. By Lemma~\ref{lem:up}, $\Phi(\alpha)$ has at least as many downward diagonals as upward ones among the last $i$ steps, for every $i\in[n]$. Taking $i=n$ yields that $\Phi(\alpha)$ has at least as many downward diagonals as upward ones. Thus, $\Phi(\alpha)$ has the same number of upward diagonals as downward ones, ensuring that $\Phi(\alpha)$ starts at $(0,0)$ and ends at $(n,0)$ and never falls below the $x$-axis. Thus $\Phi(\alpha)\in\M_n$ as desired.
\end{proof}

For $n \in \mathbb{N}$, we now give the inverse map $\Phi^{-1}: \mathcal{M}_n \rightarrow \OMVPn^{-1}((n,n-1,\ldots,1))$, from Motzkin paths of length $n$ to MVP parking functions with reverse order outcome $(n,n-1,\ldots,2,1)$.

Following the convention of parking functions, by $i \mapsto j$ we will mean that car $i$ prefers spot $j$, or we can view it as ``adding'' car $i$ to spot $j$. Let $\pi = (n,n-1,\ldots,2,1)$ be the desired outcome permutation. Given a Motzkin path $P$ of length $n$, label these $n$ steps 1, 2, $\ldots$, $n$ corresponding to the $n$ parking spots. From left to right, we can determine what cars prefer each spot (or what cars to ``add'' to each spot), depending on whether that spot is a horizontal line (exactly one car), upward diagonal (exactly two cars), or downward diagonal (exactly zero cars). 
A \emph{nondecreasing spot} is either a horizontal line or an upward diagonal.

We now give a procedure for bracketing (or pairing) the upward diagonals of $P$ with the downward diagonals of $P$ which we will we use shortly in our analysis. Treat an upward diagonal as a left parenthesis ``('' and a downward diagonal as a right parenthesis ``)''. By the end of this procedure, all the upward diagonals will be paired with all the downward diagonals. For each step $S$ of the path $P$, let $\phi_S$ denote the spot corresponding to $S$. We proceed as follows:
\begin{enumerate}
\item Find the rightmost upward diagonal $U$ and pair it with the nearest downward diagonal $D$ to its right. Output the pairs $(U,D)$ and $(\phi_U,n-\phi_D+1)$. Then remove this pair of diagonals from consideration.
\item Repeat this process until all diagonals have been paired.
\end{enumerate}
\begin{remark}
In what follows, the pair $(\phi_U,n-\phi_D+1)$ will tell us that car $n-\phi_D+1$ prefers spot $\phi_U$ in order to park at spot $\phi_D$.
\end{remark}

\begin{definition}\label{mvp construction}
Let $P$ be a Motzkin path with $n$ steps. The \emph{MVP parking function corresponding to $P$} is constructed as follows: First note that car $i$ must park in spot $n-i+1$, due to the reverse order of $w_0=(n,n-1,\ldots,2,1)$, and it must prefer a spot in $[n-i+1]$. We fill the spots $1, 2, \ldots, n$ with the cars $n, n-1, \ldots, 1$,  
as follows:
\begin{enumerate}
\item Identify the nondecreasing steps $u_1, u_2, \ldots, u_k$ in $P$.\label{step1}
\item Add car $n-u_i+1$ to spot $u_i$ for each $i \in [k]$.\label{step2}
\item Add the remaining cars to the spots that are upward diagonals via the bracketing procedure for diagonals: For each resulting pair $(U,D)$ of diagonals, add car $n-\phi_D+1$ to spot $\phi_U$.\label{step3}
\end{enumerate}
\end{definition}

To display all the diagonals together with their corresponding spots, we put them in a two-line matrix whose upper row consists of these diagonals and whose lower row consists of their spots. Those positions not appearing in the bottom row of the matrix correspond to horizontal steps.

\begin{example}\label{enc exm}
Consider the Motzkin path $P$ of length $9$ illustrated in Figure~\ref{fig:p1}. \\
\begin{figure}[h!]
\centering\begin{tikzpicture}[scale=.6, transform shape] \tikzstyle{every node} = [circle, fill=red]
\draw (0,0) grid (9,3);
 \node (a) at (0, 0) {};
 \node (b) at (9,0) {};
\draw[ultra thick, red] (0,0)--(1,1)--(2,2)--(3,2)--(4,1)--(5,0)--(6,1)--(7,2)--(8,1)--(9,0);
\end{tikzpicture}
\caption{}\label{fig:p1}
\end{figure}

\noindent By Step \eqref{step1} we identify the nondecreasing steps: $u_1=1$, $u_2=2$, $u_3=3$, $u_4=6$, $u_5=7$. 

\noindent By Step \eqref{step2} we add car  $n-u_i+1$ to spot $u_i$
 which yields:
 \[9-1+1 = 9 \mapsto 1,\quad
9 - 2 + 1 = 8 \mapsto 2,\quad
9 - 3 + 1 = 7 \mapsto 3,\quad
9 - 6 + 1 = 4 \mapsto 6,\quad
9 - 7 + 1 = 3 \mapsto 7.
\]

Now we note that the diagonals of $P$ are: \[\left[ \begin{array}{cccccccc}
            U & U & D & D & U & U & D & D\\
            1 & 2 & 4 & 5 & 6 & 7 & 8 & 9
\end{array} \right]\]

\noindent In Step \eqref{step3} we apply the bracketing process between the steps $U$ and $D$ which results in the output $(\phi_U,n-\phi_D+1)$ and where we let $\hat{U}$ and $\hat{D}$ mean that those steps have been deleted from the path:
\begin{itemize}
\item $UUDDU(UD)D$ outputs pair $(7,9-8+1)=(7,2)$,
\item $UUDD(U\hat{U}\hat{D}D)$ outputs pair $(6,9-9+1)=(6,1)$,
\item $U(UD)D\hat{U}\hat{U}\hat{D}\hat{D}$ outputs pair $(2,9-4+1)=(2,6)$, and
\item $(U\hat{U}\hat{D}D)\hat{U}\hat{U}\hat{D}\hat{D}$  outputs pair $(1,9-5+1)=(1,5)$.
\end{itemize}
Finally, we add car $n-\phi_D+1$ to spot $\phi_U$:
 \[2 \mapsto 7,\quad
 1\mapsto 6,\quad
 6 \mapsto 2,\quad
 5\mapsto 1.\]
Thus, the corresponding MVP parking function is $\Phi^{-1}(P) = (6,7,7,6,1,2,3,2,1)$, which one can readily verify satisfies $\OMVPn(6,7,7,6,1,2,3,2,1)=(9,8,7,6,5,4,3,2,1)$.
\end{example}

\begin{example}\label{enc exm 2}
Consider the Motzkin path $P$ of length $9$ illustrated in Figure~\ref{fig:p2}. \\
\begin{figure}[h!]
\begin{tikzpicture}[scale=.6, transform shape] \tikzstyle{every node} = [circle, fill=red]
\draw (0,0) grid (9,4);
 \node (a) at (0, 0) {};
 \node (b) at (9,0) {};
\draw[ultra thick, red] (0,0)--(1,1)--(2,2)--(3,2)--(4,3)--(5,2)--(6,1)--(7,2)--(8,1)--(9,0);
\end{tikzpicture}
\caption{}\label{fig:p2}
\end{figure}

\noindent By Step \eqref{step1} we identify the nondecreasing steps: $u_1=1$, $u_2=2$, $u_3=3$, $u_4=4$, $u_5=7$. 

\noindent By Step \eqref{step2} we add car  $n-u_i+1$ to spot $u_i$
 which yields:
 \[9-1+1 = 9 \mapsto 1,\quad
9 - 2 + 1 = 8 \mapsto 2,\quad
9 - 3 + 1 = 7 \mapsto 3,\quad
9 - 4 + 1 = 6 \mapsto 4,\quad
9 - 7 + 1 = 3 \mapsto 7.
\]

Now we note that the diagonals of $P$ are:
\[\left[ \begin{array}{cccccccc}
            U & U & U & D & D & U & D & D\\
            1 & 2 & 4 & 5 & 6 & 7 & 8 & 9
\end{array} \right]\]

\noindent In Step \eqref{step3} we apply the bracketing process between the steps $U$ and $D$ which results in the output $(\phi_U,n-\phi_D+1)$ and where we let $\hat{U}$ and $\hat{D}$ mean that those steps have been deleted from the path:
\begin{itemize}
\item $UUUDD(UD)D$  outputs pair $(9-3+1,2)=(7,2)$
\item $UU(UD)D\hat{U}\hat{D}D$  outputs pair $(9-6+1,5)=(4,5)$
\item $U(U\hat{U}\hat{D}D)\hat{U}\hat{D}D$  outputs pair $(9-8+1,4)=(2,4)$
\item $(U\hat{U}\hat{U}\hat{D}\hat{D}\hat{U}\hat{D}D)$  outputs pair $(9-9+1,1)=(1,1)$
\end{itemize}
Finally, we add car $n-\phi_D+1$ to spot $\phi_U$:
 \[2 \mapsto 7,\quad
 5\mapsto 4,\quad
 4 \mapsto 2,\quad
 1\mapsto 1.\]

Thus, the corresponding MVP parking function is $\Phi^{-1}(P) =(1,7,7,2,4,4,3,2,1) $, which one can readily verify satisfies $\OMVPn=(9,8,7,6,5,4,3,2,1)$.
\end{example}

\begin{lemma}\label{bracketing works}
The bracketing procedure on $P$ results in the pairing of the upward diagonals with the downward diagonals in a one-to-one correspondence.
\end{lemma}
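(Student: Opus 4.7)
The plan is to reduce the claim to the classical matching property of Dyck words. Let $w \in \{U, D\}^*$ denote the \emph{diagonal word} of $P$, obtained by reading the diagonal steps of $P$ from left to right and ignoring horizontal steps. Since $P$ begins at $(0,0)$, ends at $(n, 0)$, and never falls below the $x$-axis, and since horizontal steps do not change height, the diagonal word $w$ has equal numbers of $U$'s and $D$'s and every prefix of $w$ has at least as many $U$'s as $D$'s. That is, $w$ is a Dyck word. Let $k$ denote the number of $U$'s (equivalently, the number of $D$'s) in $w$; the goal is to show the bracketing procedure produces exactly $k$ pairs, each consisting of one $U$ and one $D$, with every diagonal appearing in exactly one pair.

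The argument is by induction on $k$. For $k = 0$ there are no diagonals to pair and the claim is vacuous. For $k \geq 1$, let $U^*$ be the rightmost upward diagonal in $P$. By the choice of $U^*$, every diagonal strictly to the right of $U^*$ in $w$ is a $D$. Since $w$ has equal numbers of $U$'s and $D$'s, at least one such $D$ exists, so there is a well-defined \emph{nearest} downward diagonal $D^*$ to the right of $U^*$; moreover $U^*$ and $D^*$ are consecutive letters of $w$. Thus the first iteration of the procedure is well-defined and produces a legitimate $(U, D)$ pair.

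After this iteration, the remaining diagonals form the word $w'$ obtained from $w$ by deleting the adjacent pair $U^*D^*$. A standard fact about Dyck words is that deleting an adjacent $UD$ factor from a Dyck word yields a Dyck word: the prefix-nonnegativity condition is preserved because removing a $UD$ can only decrease the height by $0$ at positions before the deletion and leaves heights unchanged (with a global shift by $0$) after it. Hence $w'$ has $k-1$ letters of each type and is again a Dyck word, so it corresponds to the diagonal word of what remains of $P$ after the removal. By the inductive hypothesis applied to the remaining diagonals, the bracketing procedure pairs them one-to-one. Combining this with the pair $(U^*, D^*)$ extracted in the first step yields the desired one-to-one correspondence between all upward and downward diagonals of $P$.

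The only delicate point is verifying that at every step the rightmost remaining $U$ indeed has a $D$ to its right and that the chosen pair is adjacent in the current diagonal word; both follow immediately from the Dyck property of $w$, so no serious obstacle is expected. The proof essentially identifies the bracketing procedure with the familiar innermost-parenthesis matching on Dyck words.
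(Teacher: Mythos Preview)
Your proof is correct. Both your argument and the paper's rely on the balance property of Motzkin paths, but they deploy it differently. The paper argues directly from the \emph{suffix} property (any final segment of a Motzkin path has at least as many $D$'s as $U$'s): this guarantees that at each stage the current rightmost unpaired $U$ has an unpaired $D$ to its right, so the procedure pairs every $U$ injectively with some $D$, and equality of the total counts then forces a bijection---three sentences, no induction. You instead extract the diagonal word, observe it is a Dyck word (the \emph{prefix} property), and induct on its length by peeling off the adjacent $UD$ factor at the rightmost $U$. Your route is slightly longer but makes explicit the identification of the bracketing procedure with the standard innermost matching of balanced parentheses, which is a pleasant structural takeaway. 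One small remark: the clause ``since $w$ has equal numbers of $U$'s and $D$'s, at least one such $D$ exists'' tacitly also uses the Dyck property (equal counts alone do not prevent $w$ from ending in $U$; nonnegativity is what rules that out), but you have already recorded that property, so this is harmless.
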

\begin{proof}
Since $P$ is a Motzkin path, its last $i$ steps have at least as many downward diagonals as upward ones, for any $i \in [n]$. It follows that the bracketing procedure pairs every upward diagonal with exactly one downward diagonal in an injective manner. Since the upward diagonals are equinumerous with the downward ones, the two sets are paired in a one-to-one correspondence by the bracketing procedure.
\end{proof}

For an upward diagonal $U$ paired with downward one $D$ via bracketing, we call the path connecting $U$ and $D$ the \emph{enclosure} of $(U,D)$, denoted $\mathrm{enc}(U,D)$; $\mathrm{enc}(U,D)$ will also denote the path connecting $\phi_U$ and $\phi_D$ (i.e. all the spots from $\phi_U$ to $\phi_D$), when the context is clear.

No diagonal enclosed by a bracketed pair $(U,D)$ of diagonals is paired with a diagonal outside the enclosure.
\begin{lemma}\label{pairs contained}
Suppose the upward diagonal $U$ is paired with the downward diagonal $D$ by the bracketing procedure on $P$, with the associated output pair $(\phi_U,n-\phi_D+1)$. If $U'$ and $D'$ are also paired diagonals with $U' \neq U$ and $D' \neq D$, then one of the following is true:
\begin{enumerate}
\item $U'$ is right of $U$ and $D'$ is left of $D$.
\item $U'$ is left of $U$ and $D'$ is right of $D$.
\item Both $U', D'$ are left of $U$.
\item Both $U', D'$ are right of $D$.
\end{enumerate}
\end{lemma}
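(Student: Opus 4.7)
The plan is to show that the bracketing produces a noncrossing pairing, i.e., that two distinct paired pairs cannot interleave, and then observe that noncrossingness is exactly what the four listed cases together express. The structural fact driving everything is that the procedure selects upward diagonals in right-to-left order (always the rightmost unpaired $U$), and pairs each chosen $U$ with the \emph{nearest} unpaired downward diagonal strictly to its right. So the only configurations of $(U,D)$ and $(U',D')$ that would violate the conclusion are the two ``crossing'' ones $U < U' < D < D'$ and $U' < U < D' < D$ in position, and it suffices to rule these out.

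First I would split on whether $U'$ is right or left of $U$. Suppose $U'$ is right of $U$. Then $U'$ is handled before $U$ by the procedure, and in particular $D$, which only becomes paired when $U$ is later selected, is still unpaired at the moment $U'$ is processed. If $U'$ is right of $D$, then $D' > U' > D$ places both $U'$ and $D'$ to the right of $D$, yielding case~(4). Otherwise $U < U' < D$, and $D$ is an available downward diagonal strictly to the right of $U'$ at the time $U'$ is processed; the ``nearest'' rule therefore forces the partner $D'$ to satisfy $D' \leq D$, and since the pairs are distinct we get $D' < D$. Combined with $U' < D'$ this gives the order $U < U' < D' < D$, which is case~(1).

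The case $U' < U$ is symmetric. Now $U$ is processed before $U'$, so $D'$, which only becomes paired when $U'$ is later selected, is unpaired at the moment $U$ is handled. If $D'$ is left of $U$, then $U' < D' < U$ puts $(U',D')$ entirely to the left of $U$, which is case~(3). Otherwise $D'$ lies to the right of $U$, so $D'$ is an available candidate when $(U,D)$ is being paired; by the ``nearest'' rule $D \leq D'$, hence $D < D'$, and combined with $U' < U$ this gives the order $U' < U < D < D'$, which is case~(2).

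The main obstacle is nothing conceptual; it is only careful bookkeeping of which diagonals remain unpaired at each stage of the procedure, and invoking the ``rightmost $U$, nearest available $D$'' greedy rule correctly to contradict each crossing configuration.
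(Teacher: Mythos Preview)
Your proof is correct and follows essentially the same approach as the paper: both arguments rule out the two crossing configurations by observing that whichever of $U,U'$ lies further right is processed first by the bracketing procedure, so the other pair's downward diagonal is still available at that moment, and the ``nearest unpaired $D$'' rule then forces the non-crossing constraint. The only difference is organizational\textemdash you do a direct case split landing in each of the four listed outcomes, whereas the paper phrases it as ruling out the two complementary crossing cases by contradiction\textemdash but the underlying reasoning is identical.
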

\begin{proof}
It suffices to show that the following cases cannot occur:
\begin{enumerate}
\item $U'$ is between $U$ and $D$ while $D'$ is right of $D$.
\item $U'$ is left of $U$ while $D'$ is between $U$ and $D$.
\end{enumerate}
Suppose Case 1 occurs. Since $U'$ is right of $U$, $U'$ gets paired first. Hence $D$ has not yet been paired at the time $U'$ is getting paired, but $D$ is closer to $U'$ than $D'$ is. Thus, $D'$ is not paired with $U'$, which is a contradiction.

Suppose Case 2 occurs. Since $U$ is right of $U'$, $U$ gets paired first. Hence  $D'$ has not yet been paired at the time $U$ is getting paired, but $D'$ is closer to $U$ than $D$ is. Thus, $D$ is not paired with $U$, which is a contradiction.
\end{proof}
\begin{remark}
Lemma~\ref{pairs contained} shows that each pair of enclosures are either disjoint or related by containment.
This structural property is often referred to as ``laminarity'' (see~\cite[Chapter~13.4]{schrijver2003combinatorial}).
\end{remark}

An enclosure $\mathrm{enc}(U,D)$ is \emph{maximal} if no other enclosure $\mathrm{enc}(U',D')$ contains it, in other words there exist no paired diagonals $(U',D')$ with $U'$ left of $U$ and $D'$ right of $D$. 
In Example~\ref{enc exm}, there are two maximal enclosures--one with output pair $(6,1)$ and one with output pair $(1,5)$.
In Example~\ref{enc exm 2}, there is one maximal enclosure with output pair $(1,1)$.

\begin{corollary}\label{max enclosure decomp}
The Motzkin path $P$ can be partitioned into a sequence of maximal enclosures linked by horizontal steps.
\end{corollary}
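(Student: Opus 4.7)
The plan is to deduce the partition from two facts already at hand: the laminarity of the bracketing (Lemma~\ref{pairs contained}) and the defining property that $P$ never dips below the $x$-axis. The crux, from which the partition follows cleanly, is the claim that every maximal enclosure of $P$ starts (and hence ends) at height $0$.

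First, I would observe that Lemma~\ref{pairs contained} makes the family of enclosures laminar, so distinct maximal enclosures are in particular pairwise disjoint as contiguous subpaths of $P$. To prove the height claim, I would argue by contradiction: suppose $\mathrm{enc}(U,D)$ is maximal yet the height of $P$ just before $U$ is some $h > 0$. Then the prefix of $P$ ending just before $U$ contains at least one up-step whose standard-bracketing partner lies outside that prefix; pick such a $U'$ and let $D'$ be its partner. Since the bracketing inside $[U',D']$ is balanced, $D'$ cannot lie strictly between $U'$ and $U$, so $D'$ lies strictly to the right of $U$. Applying Lemma~\ref{pairs contained} to the pairs $(U,D)$ and $(U',D')$, and using that $U'$ is to the left of $U$ while $D'$ is to the right of $U$, the only admissible laminar configuration is $D'$ to the right of $D$. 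This gives $\mathrm{enc}(U,D) \subsetneq \mathrm{enc}(U',D')$ and contradicts maximality. Once the opening height of a maximal enclosure is $0$, the balance of ups and downs inside the enclosure forces the closing height to be $0$ as well.

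With the height claim established, any step of $P$ that lies outside every maximal enclosure must sit at height $0$. Such a step cannot be a downward diagonal (that would force $P$ below the $x$-axis), and it cannot be an upward diagonal either, because its bracketing partner would produce an enclosure containing it, and by laminarity this enclosure is either itself maximal or nested inside some maximal one\textemdash in either case the step would belong to a maximal enclosure, contradicting our hypothesis. Hence every outside step is horizontal, and $P$ decomposes as a sequence of maximal enclosures separated by (possibly empty) runs of horizontal steps.

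The main obstacle, as I see it, will be the height-$0$ claim: it is where the laminar structure of the bracketing is really exploited, and one must track carefully which up-steps have been paired as of a given position in the path in order to locate the partner $D'$ of an unmatched up-step beyond $D$. The rest of the argument is then a short deduction from laminarity together with the Motzkin condition.
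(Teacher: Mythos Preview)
Your argument is correct, and it supplies a complete justification where the paper gives none: the corollary is stated without proof, as an immediate consequence of the laminarity in Lemma~\ref{pairs contained}. Your route via the height-$0$ claim is sound, and the contradiction you derive from laminarity when a maximal enclosure starts above the axis is exactly right.

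That said, the height-$0$ claim is more than you need, and your handling of the two diagonal cases is asymmetric in a way that slightly obscures the logic. For an outside up-step you argue (correctly, and without any appeal to height) that it lies in its own enclosure and hence in some maximal one, a contradiction. The identical argument works for an outside down-step $D$: it lies in $\mathrm{enc}(U,D)$ for its bracketing partner $U$, hence in some maximal enclosure. Once both diagonal types are excluded this way, disjointness of maximal enclosures (from laminarity) finishes the partition immediately. Your use of the height-$0$ claim to rule out outside down-steps instead requires the intermediate assertion ``any step outside every maximal enclosure sits at height $0$,'' which does not follow from the height-$0$ claim for maximal enclosures alone without a left-to-right induction you do not spell out. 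This is not a genuine gap\textemdash the missing induction is routine\textemdash but the symmetric enclosure argument is cleaner and avoids the issue entirely.
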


For any enclosure $\mathrm{enc}(U,D)$, we define its \emph{laminar level} to be the length of any maximal sequence of enclosures $\mathrm{enc}(U,D)$, $\mathrm{enc}(U_1,D_1)$, $\mathrm{enc}(U_2,D_2)$, $\ldots$, $\mathrm{enc}(U_k,D_k)$ such that
\begin{enumerate}
\item $\mathrm{enc}(U,D)$ contains $\mathrm{enc}(U_1,D_1)$
\item $\mathrm{enc}(U_i,D_i)$ contains $\mathrm{enc}(U_{i+1},D_{i+1})$ for all $i \in [k-1]$.
\end{enumerate}
In Example~\ref{enc exm 2}, the enclosure corresponding to the output pair $(1,1)$ has laminar level 3.

Next, we show that a car with a smaller index preferring an upward diagonal $U$ always parks at the downward diagonal $D$ paired with $U$.

\begin{lemma}\label{upward parks downward}
Suppose the upward diagonal $U$ is paired with the downward diagonal $D$ by the bracketing procedure on $P$, with the associated output pair $(\phi_U,n-\phi_D+1)$. 
Then the cars preferring $\mathrm{enc}(U,D)$ are $n-\phi_D+1, n-\phi_D+2, \ldots, n-\phi_U+1$. Furthermore, by the time car $n-\phi_U+2$ enters the parking lot, car $n-\phi_D+i$ will have parked at its final spot $\phi_D-i+1$, for all $i \in [\phi_D-\phi_U+1]$.
\end{lemma}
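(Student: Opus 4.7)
The plan is to argue the two assertions separately: the set identity by direct enumeration from Definition~\ref{mvp construction}, and the parking dynamics by strong induction on the width $w \coloneqq \phi_D - \phi_U$ of the enclosure.

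For the first assertion, I would observe that every spot $s \in [\phi_U, \phi_D]$ contributes exactly one car of index $n - s + 1$ whose preference lies in $\mathrm{enc}(U, D)$: if $s$ is nondecreasing, Step~\eqref{step2} places car $n - s + 1$ preferring $s$ itself; if $s$ is downward, Lemma~\ref{pairs contained} forces its paired upward $U'$ to lie in $\mathrm{enc}(U, D)$ as well, so Step~\eqref{step3} places car $n - s + 1$ preferring $\phi_{U'} \in \mathrm{enc}(U, D)$. Conversely, every preference produced by Definition~\ref{mvp construction} that lands in $\mathrm{enc}(U, D)$ is of one of these two types. As $s$ runs over $[\phi_U, \phi_D]$, the car indices run over precisely $\{n - \phi_D + 1, \ldots, n - \phi_U + 1\}$, establishing the first assertion.

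For the second assertion, set $c_i \coloneqq n - \phi_D + i$ and induct on $w$. In the base case $w = 1$, both $c_1$ and $c_2 = n - \phi_U + 1$ prefer $\phi_U$; assuming $\mathrm{enc}(U, D)$ is empty when $c_1$ arrives, $c_1$ parks at $\phi_U$, and then $c_2$ bumps $c_1$ to the next empty spot $\phi_D$, giving the claimed configuration. For the inductive step, decompose the interior of $\mathrm{enc}(U, D)$ into its maximal sub-enclosures (each of width $< w$) and isolated horizontal steps. The crucial observation is that the preference of each middle car $c_2, \ldots, c_w$ lies strictly in $(\phi_U, \phi_D)$: a nondecreasing interior spot $s$ produces a car preferring $s$, and a downward interior spot paired with $U'$ produces a car preferring $\phi_{U'}$, which Lemma~\ref{pairs contained} places in the interior. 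Hence after $c_1$ settles at $\phi_U$, the processing of $c_2, \ldots, c_w$ takes place entirely in the interior without disturbing $c_1$; by the inductive hypothesis each sub-enclosure settles its cars correctly and each isolated horizontal step accepts its single car. Just before $c_{w+1} = n - \phi_U + 1$ arrives, spots $\phi_U + 1, \ldots, \phi_D - 1$ are completely filled while $\phi_D$ remains empty, so $c_{w+1}$ bumps $c_1$ rightward to $\phi_D$, yielding the final placement $c_i \mapsto \phi_D - i + 1$ for all $i \in [w+1]$.

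The principal obstacle is verifying the standing assumption that $\mathrm{enc}(U, D)$ is empty when $c_1$ arrives. For a maximal enclosure this follows directly: Definition~\ref{mvp construction} combined with Lemma~\ref{pairs contained} and maximality shows that every car of index less than $c_1$ has its preference strictly to the right of $\phi_D$, and since MVP bumping only moves cars rightward, no such earlier car can ever reach $[\phi_U, \phi_D]$. For a non-maximal $\mathrm{enc}(U, D)$, I would run a secondary induction on nesting depth: the surrounding enclosure's first car parks at some $\phi_{U^{**}} < \phi_U$ outside $\mathrm{enc}(U, D)$, and the sub-structure of that surrounding enclosure lying to the right of $\mathrm{enc}(U, D)$ settles (by the inductive hypothesis applied to its smaller-width sub-enclosures and horizontal steps) at spots $\geq \phi_D + 1$, leaving $\mathrm{enc}(U, D)$ untouched until $c_1$ enters. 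Coordinating these two inductions, one on width and one on nesting depth, is where most of the bookkeeping will lie, but in either case $\mathrm{enc}(U, D)$ is indeed empty at the moment $c_1$ arrives, and the argument proceeds.
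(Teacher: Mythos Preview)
Your argument is correct and shares the paper's core strategy: decompose the interior of $\mathrm{enc}(U,D)$ into smaller sub-enclosures linked by horizontal steps and argue by strong induction that each piece settles correctly before the next one to its left begins processing. The differences are organizational. First, you prove the set identity (which cars prefer $\mathrm{enc}(U,D)$) directly from Definition~\ref{mvp construction} and Lemma~\ref{pairs contained}, whereas the paper folds this into the same induction. Second, and more substantively, you induct on the width $w=\phi_D-\phi_U$ with base case $w=1$, while the paper inducts on the \emph{laminar level} (nesting depth) of $\mathrm{enc}(U,D)$ with base case ``$U$ and $D$ linked only by horizontal steps.'' Because laminar level drops strictly for every sub-enclosure, the paper's inductive hypothesis already guarantees that each sub-enclosure is completely and correctly parked before the surrounding one resumes, so the non-interference (your ``emptiness'' assumption) comes for free; the paper dispatches it in one line by noting that any bumping car from the left has index exceeding $n-\phi_U+1$. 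Your width induction works just as well, but it forces you to isolate the emptiness hypothesis and run the secondary nesting-depth argument you describe---valid, but heavier bookkeeping than the paper's single induction.
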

\begin{proof}
We proceed via strong induction on the laminar level of $\mathrm{enc}(U,D)$. For the base case of laminar level 1, $U$ and $D$ are linked by horizontal steps only. The spots $\phi_U, \phi_U+1, \ldots, \phi_D-1$ are nondecreasing, so by definition they are preferred by cars $n-\phi_U+1, n-\phi_U, \ldots, n-\phi_D+2$, respectively. Car $n-\phi_D+1$ also prefers spot $\phi_U$, but it is bumped by car $n-\phi_U+1$ after the other cars park at the spots they prefer, and hence car $n-\phi_D+1$ parks at spot $\phi_D$. These cars park without interference from the cars preferring spots left of $\phi_U$, as the cars from the latter set that do the bumping are greater than $n-\phi_U+1$.

Now suppose that the claim is true for all enclosures of laminar level at most $k$. We prove the claim for $\mathrm{enc}(U,D)$ of laminar level $k+1$. The steps between $U$ and $D$ can be partitioned into a sequence, in right-to-left order, of disjoint enclosures $E_1, E_2, \ldots, E_l$ linked by horizontal steps, where $E_i$ has laminar level at most $k$ for all $i \in [l]$. Again, cars $n-\phi_U+1$ and $n-\phi_D+1$ both prefer spot $\phi_U$, with the cars between $n-\phi_U+1$ and $n-\phi_D+1$ preferring the spots between $\phi_U$ and $\phi_D$. By the inductive hypothesis, $E_i$ is completely and correctly parked before the cars preferring $E_{i+1}$ start entering the parking lot, for each $i \in [l-1]$; recall that $E_{i+1}$ is left of $E_i$. 
Also, for any horizontal step $j$, car $n-j+1$ parks at its preferred spot $j$. 
It follows that all the steps between $U$ and $D$ are completely and correctly parked before car $n-\phi_U+1$ enters the parking lot. 
Finally, car $n-\phi_D+1$ is bumped by car $n-\phi_U+1$ to its final spot $\phi_D$. 
Again, the bumping cars preferring spots left of $\phi_U$ are greater than $n-\phi_U+1$, so they do not interfere.
\end{proof}

\begin{proposition}
\label{inv mot}
If $P \in \mathcal{M}_n$, then $\Phi^{-1}(P) \in \OMVPn^{-1}((n,n-1,\ldots,2,1))$. 
\end{proposition}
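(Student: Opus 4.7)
The plan is to verify that the preference vector produced by Definition~\ref{mvp construction} is well-defined and, under the MVP parking rule, drives every car $n-j+1$ into spot $j$; this simultaneously certifies $\Phi^{-1}(P) \in \MVP_n$ (all cars park) and $\OMVPn(\Phi^{-1}(P)) = (n, n-1, \ldots, 2, 1)$. Essentially all of the combinatorial work has been done in Lemmas~\ref{bracketing works}--\ref{upward parks downward}; what remains is to assemble those local statements into a global one.

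First I would check that each of the $n$ cars receives exactly one preference. Let $a$ denote the common number of upward and downward diagonals of $P$, and let $h$ denote the number of horizontal steps; since $P$ is a Motzkin path, $2a + h = n$. Step~(\ref{step2}) of the construction assigns preferences to the $a + h$ cars whose indices are read off from the nondecreasing steps, while Step~(\ref{step3}) assigns one further preference to each of the $a$ cars indexed by the downward diagonals, with uniqueness of the pairing guaranteed by Lemma~\ref{bracketing works}. Because the positions of the nondecreasing steps and the positions of the downward diagonals partition $[n]$, the induced car labels $n - u_i + 1$ and $n - \phi_D + 1$ together exhaust $[n]$ without repetition, so $\Phi^{-1}(P)$ is a well-defined element of $[n]^n$.

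Next I would invoke Corollary~\ref{max enclosure decomp} to decompose $P$ from left to right into a sequence of blocks, each of which is either a free horizontal step (one not contained in any enclosure) or a maximal enclosure. For a free horizontal step at position $j$, the construction assigns only car $n - j + 1$ to spot $j$, so that car parks there; this matches the desired outcome $\pi_j = n - j + 1$. For a maximal enclosure $\mathrm{enc}(U, D)$, Lemma~\ref{upward parks downward} directly yields that the cars $n - \phi_D + 1, n - \phi_D + 2, \ldots, n - \phi_U + 1$ occupy spots $\phi_D, \phi_D - 1, \ldots, \phi_U$ respectively, which also matches $\pi_j = n - j + 1$ on every spot $j$ inside the enclosure.

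The remaining step, and the main obstacle, is to rule out cross-block interference when stringing these local conclusions together. The crucial observation is that, by construction, every car's preferred spot lies inside its own block: a car coming from a free horizontal step prefers only that step's position, and a car associated to an enclosure $\mathrm{enc}(U, D)$ (either as the ``Step~(\ref{step2}) car'' at an interior nondecreasing step or as the ``Step~(\ref{step3}) car'' paired to $D$) prefers only a spot in $[\phi_U, \phi_D]$. Hence no car ever attempts to park in, nor bumps a car located within, a block other than its own, and the MVP dynamics decompose cleanly block by block. Combining this non-interference with the per-block conclusions of the previous paragraph shows that car $n - j + 1$ parks in spot $j$ for every $j \in [n]$, so $\OMVPn(\Phi^{-1}(P)) = (n, n-1, \ldots, 2, 1)$ and in particular $\Phi^{-1}(P) \in \OMVPn^{-1}((n,n-1,\ldots,2,1))$.
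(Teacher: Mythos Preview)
Your proposal is correct and follows essentially the same approach as the paper: decompose $P$ via Corollary~\ref{max enclosure decomp} into maximal enclosures linked by free horizontal steps, invoke Lemma~\ref{upward parks downward} on each enclosure, handle the free horizontal steps trivially, and note that the blocks do not interfere. You are in fact somewhat more thorough than the paper, which omits your explicit well-definedness check and your discussion of cross-block non-interference.
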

\begin{proof}
By Corollary~\ref{max enclosure decomp}, we can partition the steps of $P$ into a sequence of maximal enclosures $E_1, E_2, \ldots, E_k$ linked by horizontal steps. Any car that gets bumped before parking must prefer one of these enclosures. By Lemma~\ref{upward parks downward}, the cars preferring $E_i$ will park correctly within $E_i$, for all $i \in [k]$. Finally, the cars preferring the horizontal steps will park where they prefer, since no bumping occurs at any horizontal step.
\end{proof}

Now that we have established both $\Phi$ and $\Phi^{-1}$, we obtain the following.
\begin{theorem}\label{motzkin thm}
The map $\Phi: \OMVPn^{-1}((n,n-1,\ldots,2,1)) \rightarrow \mathcal{M}_n$ is a bijection.
\end{theorem}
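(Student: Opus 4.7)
The plan is to verify that $\Phi$ and $\Phi^{-1}$ are inverse maps between $\OMVPn^{-1}(w_0)$ and $\mathcal{M}_n$, where $w_0=(n,n-1,\ldots,2,1)$. Proposition~\ref{parking to motzkin} and Proposition~\ref{inv mot} have already done the work of showing that the images of $\Phi$ and $\Phi^{-1}$ land in the correct codomains. What remains is to check that the two compositions $\Phi\circ\Phi^{-1}$ and $\Phi^{-1}\circ\Phi$ are both identity maps.

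For $\Phi\circ\Phi^{-1}=\mathrm{id}_{\mathcal{M}_n}$, start with $P\in\mathcal{M}_n$ and consider $\alpha=\Phi^{-1}(P)$. The construction in Definition~\ref{mvp construction} assigns one car to each nondecreasing spot of $P$, and, via the bracketing procedure, pairs each upward diagonal with exactly one downward diagonal (Lemma~\ref{bracketing works}) and attaches a second car to each upward-diagonal spot. Hence, under $\alpha$, every downward-diagonal spot is preferred by zero cars, every horizontal-step spot by one car, and every upward-diagonal spot by two cars. By Definition~\ref{lattice}, applying $\Phi$ then reads off exactly the steps of $P$, so $\Phi(\alpha)=P$.

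For $\Phi^{-1}\circ\Phi=\mathrm{id}_{\OMVPn^{-1}(w_0)}$, take $\alpha\in\OMVPn^{-1}(w_0)$ and set $P=\Phi(\alpha)$ and $\beta=\Phi^{-1}(P)$. By Lemma~\ref{twoOnes}, the number of cars preferring each spot under $\alpha$ matches the step-type classification of $P$ used in Definition~\ref{lattice}. For a horizontal-step spot $j$, the unique car preferring $j$ in $\alpha$ must be car $n-j+1$, since $w_0$ forces car $n-j+1$ to park at $j$; this agrees with $\beta$. For each upward diagonal $U$ at position $\phi_U$, there are exactly two preferring cars under $\alpha$: car $n-\phi_U+1$ (again forced by the outcome) and a second car whose identity is pinned down by Lemma~\ref{upward parks downward}. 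That lemma, combined with the laminar structure from Lemma~\ref{pairs contained}, identifies the second car as car $n-\phi_D+1$, where $D$ is the downward diagonal paired with $U$ via the bracketing procedure, which is precisely the car that $\beta$ assigns to $\phi_U$. Hence $\alpha=\beta$.

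The main obstacle is the second direction, where we must show that the pairing of cars to upward diagonals is uniquely dictated by the outcome $w_0$. This hinges on Lemma~\ref{upward parks downward}: without the control it provides over the bumping chain inside each maximal enclosure, nothing prevents other consistent assignments of the ``second preferring car'' to an upward diagonal. Everything else is a bookkeeping check that the counts of cars per spot under $\alpha$ and under $\beta$ agree and that the forced parking dictated by $w_0$ leaves no freedom in the assignment.
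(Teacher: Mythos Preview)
Your approach is more detailed than the paper's own, which simply asserts the theorem once both maps $\Phi$ and $\Phi^{-1}$ have been put in place. The direction $\Phi\circ\Phi^{-1}=\mathrm{id}_{\mathcal{M}_n}$ is correctly argued: by Definition~\ref{mvp construction} and Lemma~\ref{bracketing works}, the constructed preference vector places exactly zero, one, or two cars at each spot according to its step type, so applying $\Phi$ recovers~$P$.

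There is, however, a genuine gap in your argument for $\Phi^{-1}\circ\Phi=\mathrm{id}$. Your claim that the unique preferrer of a horizontal spot $j$ must be car $n-j+1$ is correct but underjustified; it follows once one notes that $a_i\le n-i+1$ for all $i$ (bumped cars only move rightward) and that the largest-indexed car preferring a given spot is never subsequently bumped from it, hence must be its final occupant $n-j+1$. The more serious issue is your appeal to Lemma~\ref{upward parks downward} to identify the second preferrer at each $U$-spot. That lemma is stated and proved only for the specific preference vector $\Phi^{-1}(P)$ constructed in Definition~\ref{mvp construction}; its conclusion tells you how \emph{those} preferences park, not what preferences an arbitrary $\alpha\in\OMVPn^{-1}(w_0)$ with $\Phi(\alpha)=P$ must have. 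What you actually need is the reverse implication: given the outcome $w_0$ and the multiplicity pattern $P$, the second car at $\phi_U$ is \emph{forced} to be $n-\phi_D+1$ for the bracketing partner $D$. Establishing this requires a separate argument---for instance, an induction on the laminar level of enclosures parallel to the proof of Lemma~\ref{upward parks downward}, but driven by the constraint $\OMVPn(\alpha)=w_0$ rather than by already-known preferences. Without it, nothing in your proof rules out distinct $\alpha,\alpha'\in\OMVPn^{-1}(w_0)$ with $\Phi(\alpha)=\Phi(\alpha')$.
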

Theorem~\ref{thm:motzkin} immediately follows as a corollary.

\subsection{Applications: Permutations with an increasing tail}
We now consider a special subset of permutations in $\Sym_n$ for which we can apply Theorem~\ref{thm:motzkin} to describe the size of the fibers of the outcome map.
To do this, we formally describe a permutation ``with an increasing tail.''

\begin{definition}\label{def:pi'}
    Let $m > n$. For $\pi \in \Sym_n$ define the permutation $\pi'\in\Sym_m$ as follows:
    \[\pi' = \pi (n+1)(n+2)\cdots m.
    \]
\end{definition}

For example,
if $\pi = 4231\in\Sym_4$, then $\pi'=42315678\in\Sym_8$, and
    if $\pi = 21543\in\Sym_5$, then $\pi'=2154367\in\Sym_7$.
We can now establish our first result.

\begin{lemma}\label{lem:same size}
For $m>n$, if $\pi\in\Sym_n$ and $\pi'=\pi (n+1)(n+2)\cdots m\in\Sym_m$, as in Definition~\ref{def:pi'}, then $|\OMVPm^{-1}(\pi')| = |\OMVPn^{-1}(\pi)|.$
\end{lemma}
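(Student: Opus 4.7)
The plan is to exhibit an explicit bijection between $\OMVPn^{-1}(\pi)$ and $\OMVPm^{-1}(\pi')$. Define $\Phi : \OMVPn^{-1}(\pi) \to \OMVPm^{-1}(\pi')$ by appending the trivial tail, $\Phi(\alpha) := (\alpha_1,\ldots,\alpha_n,n+1,n+2,\ldots,m)$, and the candidate inverse $\Psi : \OMVPm^{-1}(\pi') \to \OMVPn^{-1}(\pi)$ by truncation, $\Psi(\beta) := (\beta_1,\ldots,\beta_n)$. Once both maps are shown to be well-defined and mutually inverse, the claim follows.

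First I would verify that $\Phi$ is well-defined by tracing the MVP process on $\alpha' := \Phi(\alpha)$. Because $\alpha$ is an MVP parking function of length $n$, every bump generated by cars $1,\ldots,n$ is absorbed within $[n]$; no bumped car ever spills into spots $n+1,\ldots,m$, which remain empty throughout the first $n$ steps. Hence the first $n$ arrivals yield precisely the outcome $\pi$ on spots $[n]$. Each subsequent car $n+i$ then arrives preferring the (still empty) spot $n+i$ and parks there, producing the outcome $\pi'$ as desired.

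To verify that $\Psi$ is well-defined, the main leverage comes from Theorem~\ref{thm:upper bound}. A direct computation from Definition~\ref{prefSpots} shows that $\mathcal{C}_{n+i}(\pi') = \{n+i\}$ for each $i \in [m-n]$, because every car arriving after car $n+i$ parks strictly to the right of spot $n+i$ in $\pi'$; hence $\Omega_{n+i}(\pi') = \{n+i\}$ and Theorem~\ref{thm:upper bound} forces $\beta_{n+i} = n+i$. An analogous computation gives $\Omega_j(\pi') = \Omega_j(\pi) \subseteq [n]$ for every $j \in [n]$, which shows $(\beta_1,\ldots,\beta_n) \in [n]^n$.

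The main obstacle is to show that the truncation $\alpha := (\beta_1,\ldots,\beta_n)$ actually lies in $\OMVPn^{-1}(\pi)$. Here I would exploit a key monotonicity property of the MVP rule: a bumped car only ever relocates to a strictly greater spot. If any car $j' \in [n]$ were ever placed at a spot greater than $n$ during the length-$m$ process on $\beta$, it would remain at a spot greater than $n$ for the rest of the process, contradicting the fact that in $\pi'$ every car of $[n]$ occupies a spot in $[n]$. Hence the first $n$ arrivals under $\beta$ take place entirely within spots $[n]$, so the length-$n$ MVP process on $\alpha$ reproduces this segment verbatim and yields the outcome $\pi$. That $\Phi$ and $\Psi$ are mutual inverses is then immediate from the definitions.
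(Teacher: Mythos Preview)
Your proof is correct and follows essentially the same bijection as the paper: append the trivial tail in one direction, truncate in the other, and use the $\Omega_j$ computation (equivalently Definition~\ref{prefSpots}) to force $\beta_{n+i}=n+i$ and $\beta_j\in[n]$. Your monotonicity argument (that a car's position can only increase under the MVP rule, so no car of $[n]$ can spill past spot $n$ and later return) makes explicit a step the paper glosses over when it simply asserts ``by definition of $\beta$, note that $\OMVPn(\alpha)=\pi$.''
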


\begin{proof}
For this proof, it suffices to provide a bijection between $\OMVPm^{-1}(\pi')$ and $\OMVPn^{-1}(\pi)$. 

Let $\pi' = \pi (n+1)(n+2)\cdots m$ where $\pi \in \Sym_n$ and $m \geq n$. Notice that by Definition~\ref{prefSpots}, each car $\pi'_{n+1}, \ldots, \pi'_m$ can only prefer one spot, that is, where it parks and none of them prefer any spots in $[n]$. Similarly, by Definition~\ref{prefSpots}, the cars in $\pi$ cannot prefer spots $n+1, n+2,\ldots, m$. This implies that cars $\pi'_{n+1}, \ldots ,\pi'_m$ park independently of the cars in $\pi$.
We now define the map $\psi: \OMVPn^{-1}(\pi) \rightarrow \OMVPm^{-1}(\pi')$ given by
\[\psi(\beta) = \beta (n+1)(n+2)\cdots m.\]
This means that each car $i$, with $n+1 \leq i \leq m$, prefers spot $i$. 
We now show that $\psi$ is a one-to-one map. Consider $\alpha,\beta \in \OMVPn^{-1}(\pi)$ where $\psi(\alpha) = \psi(\beta)$. Then note 
\begin{align*}
    \psi(\alpha) &=\alpha (n+1)\cdots m = 
    \beta (n+1)\cdots m =\psi(\beta)
\end{align*}
if and only if $\alpha=\beta$, as expected.

We now show that the map $\psi$ is onto, that is that given $\beta\in\OMVPm^{-1}(\pi')$, there exists $\alpha\in\OMVPn^{-1}(\pi)$ such that $\psi(\alpha)=\beta$. 
To begin let
$\beta=(b_1,b_2,\ldots,b_m)\in\OMVPm^{-1}(\pi') $, where $\pi'=\pi(n+1)(n+2)\cdots m$ with $\pi\in\Sym_n$ arbitrary. 
By definition $\OMVPm(\beta)=\pi(n+1)(n+2)\cdots m$.
Let $ \alpha=(b_1,b_2,\ldots, b_n)$. By definition of $\beta$, note that 
$\OMVPn(\alpha)=\pi$. Thus $\alpha\in\OMVPn^{-1}(\pi)$. Now observe that by definition
$\psi(\alpha)=(b_1,b_2,\ldots, b_n,n+1,n+2,\ldots,m)$. We now claim that $\psi(\alpha)=\beta$. It suffices to show that $b_j=j$ for all $n+1\leq j\leq m$, but this is precisely the fact we established earlier, as car $c=n+1,\ldots,m$ can only prefer the spot $c$.
\end{proof}

From Lemma~\ref{lem:same size} and Theorem~\ref{thm:motzkin} we immediately have the following. 

\begin{corollary}
If $\pi'=w_0(n+1)(n+1)\cdots(m-1)m\in\Sym_n$ with $w_0\in\Sym_n$, then 
$|\OMVPn^{-1}(\pi')|=|\M_n|$.
\end{corollary}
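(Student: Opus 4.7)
The plan is to observe that this corollary is an immediate consequence of Theorem~\ref{thm:motzkin} and Lemma~\ref{lem:same size}, requiring essentially no new work beyond invoking these two results in sequence. There are no real obstacles here; the only care needed is to reconcile the indexing (the statement has a small notational issue since $\pi'$ is asserted to lie in $\Sym_n$ when the construction in Definition~\ref{def:pi'} places it in $\Sym_m$ with $m \geq n$, so I would read the hypothesis as $\pi' \in \Sym_m$).

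First, I would invoke Theorem~\ref{thm:motzkin} applied to $w_0 = (n, n-1, \ldots, 2, 1) \in \Sym_n$, which directly gives
\[
|\OMVPn^{-1}(w_0)| = |\M_n|.
\]
Next, I would apply Lemma~\ref{lem:same size} with the choice $\pi = w_0 \in \Sym_n$ and the associated permutation $\pi' = w_0 \, (n+1)(n+2)\cdots m \in \Sym_m$ described in Definition~\ref{def:pi'}. This yields
\[
|\OMVPm^{-1}(\pi')| = |\OMVPn^{-1}(w_0)|.
\]

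Chaining these two equalities produces $|\OMVPm^{-1}(\pi')| = |\M_n|$, which is the desired conclusion. The conceptual content is entirely contained in the earlier lemma (the ``appended fixed tail'' does not contribute to the fiber count because by Definition~\ref{prefSpots} each appended car $c \in \{n+1, \ldots, m\}$ is forced to prefer its own spot $c$, and none of the cars in the prefix can prefer these spots either) and in Theorem~\ref{thm:motzkin} (the Motzkin bijection for the reverse permutation). So the proof really is a two-line derivation, and the only subtlety worth flagging in the write-up is that the statement's ``$\pi' \in \Sym_n$'' should be understood as $\pi' \in \Sym_m$ to be consistent with Definition~\ref{def:pi'}.
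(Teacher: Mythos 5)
Your proposal matches the paper exactly: the corollary is stated as an immediate consequence of Lemma~\ref{lem:same size} combined with Theorem~\ref{thm:motzkin}, which is precisely the two-step chain you give. Your note that the statement's ``$\pi'\in\Sym_n$'' should read $\pi'\in\Sym_m$ is a correct observation about a typo in the paper.
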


\section{Future work}\label{sec:futurework}
In our work, we give a complete characterization of when the number of MVP parking functions that park in a given order satisfies parking independence. Using that result we considered permutations that are increasing and decreasing $k$-cycles, and gave closed formulas for the size of each outcome map fiber. 
Then we considered permutations which did not satisfy the pattern avoidance requirement of Theorem~\ref{thm:equality}, including the longest word and permutations with an increasing tail, cases in which we showed that the number of MVP parking functions parking in that order is a Motzkin number.
One could consider other families of permutations in order to give new formulas for the fiber sizes.

Moreover, we note that Theorem~\ref{thm:size>=1} implies that the smallest fiber corresponds to the identity permutation. 
In Table \ref{tab:large}, we provide computational evidence that the longest word $w_0 = (n, n - 1, \ldots, 2, 1)$ achieves the maximum fiber size (given by Motzkin numbers) for $1\leq n\leq 6$. 
However, the permutation $\widetilde{w}_0 \coloneqq (n - 1, n, n - 2, n - 3, \ldots, 2, 1)$ has a larger fiber size as soon as $n=7$, and this is the largest fiber among all permutations also for $n=8$.
\begin{table}[h!]
    \centering
    \begin{tabular}{|l|c|c|c|c|c|c|c|c|}\hline
         $n$&  1&2&3&4&5&6&7&8\\\hline
         $\OMVPn^{-1}(w_0)$& 1&2&4&9&21&51&127&323\\[2pt]\hline
         $\OMVPn^{-1}(\widetilde{w}_0)$&--&--& 3& 8& 20& 51& 131& 341\\[2pt]\hline
    \end{tabular}
    \caption{Largest fibers for $n=1,2,\ldots,8$}
    \label{tab:large}
\end{table}

For $n\geq 9$, it remains an open problem to determine a characterization for a permutation $\pi\in\Sym_n$ satisfying
    \[|\OMVPn^{-1}(\pi)|\geq |\OMVPn^{-1}(\tau)|\]  for all $\tau\in\Sym_{n}$.

\section*{Acknowledgments}
The authors thank Casandra D.~Monroe
for math conversations during the progress of this project.
We also thank Alexander Woo for the connection to Lehmer codes, the references and background provided in Remark~\ref{remark:AlexWoo}, and for insightful comments regarding the contents of this manuscript. The authors also thank MSRI and the Williams College Science Center for travel funding in support of this research.

\bibliographystyle{plain}
\bibliography{Bibliography.bib}

\end{document}